\documentclass[12pt]{amsart}

\usepackage[usenames,dvipsnames]{xcolor}

\usepackage{fancyhdr}
\pagestyle{fancyplain}

\usepackage{amsmath,amsthm,amsfonts,graphicx,amssymb,amscd}
\usepackage[all,cmtip]{xy}
\usepackage{graphicx, overpic, float}
\usepackage[mathscr]{euscript}
\usepackage{hyperref}
\usepackage{enumerate}

\newtheorem{thm}{Theorem}[section]
\newtheorem*{jthm}{Theorem}

\newtheorem{cor}[thm]{Corollary}
\newtheorem{lem}[thm]{Lemma}

\newtheorem{prop}[thm]{Proposition}

\theoremstyle{definition}
\newtheorem{defn}[thm]{Definition}
\newtheorem{definition}[thm]{Definition}

\setcounter{tocdepth}{1}
\headheight=0.0in
\headsep 0.4in
\textwidth=5.75in
\oddsidemargin=0.5in
\evensidemargin=0.5in



\newcommand{\tec}{Teichm\"uller }

\newcommand{\T}{\mathscr{T}}

\renewcommand{\P}{\mathscr{P}}

\newcommand{\R}{\mathbb{R}}

\newcommand{\C}{\mathbb{C}}
\newcommand{\Cc}{\mathcal{C}}

\newcommand{\M}{\mathscr{M}}

\usepackage{fancybox}

\pagestyle{myheadings} 
\usepackage{ulem}

\renewcommand{\emph}[1]{{\it #1}}

\usepackage{color}

\markright{\today}

\title[Quadratic  differentials, half-plane structures, and harmonic maps]{Quadratic differentials, half-plane structures, and harmonic maps to graphs}

\author{Subhojoy Gupta}
\author{Michael Wolf}


\address{Center for Quantum Geometry of Moduli Spaces, Ny Munkegade 118 Bldg. 1530, Aarhus C 8000, Denmark.}

\email{subhojoy@caltech.edu}

\address{Department of Mathematics, Rice University, Houston, Texas, 77005-1892, USA.}
\email{mwolf@rice.edu}


\begin{document}

\setcounter{tocdepth}{4}

\maketitle
\begin{abstract} Let  $(\Sigma,p)$ be a  pointed Riemann surface and $k\geq 1$ an integer. We parametrize the space of meromorphic quadratic differentials on $\Sigma$ with a pole of order $k+2$ at $p$,  having a connected critical graph and an induced metric composed of $k$ Euclidean half-planes. The parameters form a finite-dimensional space $\mathcal{L} \cong \mathbb{R}^{k} \times S^1$ that describe a model singular-flat metric around the puncture with respect to a choice of coordinate chart. This generalizes an important theorem of Strebel, and associates, to each point in  $\mathcal{T}_{g,1} \times \mathcal{L}$, a unique metric spine of the surface that is a  ribbon-graph with $k$ infinite-length edges to $p$. The  proofs study and relate the singular-flat geometry on the surface and the infinite-energy harmonic map from $\Sigma\setminus p$ to a $k$-pronged graph,  whose Hopf differential is that quadratic differential.
\end{abstract}

\section{Introduction}

Holomorphic quadratic differentials on compact Riemann surfaces are central objects in classical \tec theory; for example, they provide coordinates for the \tec space of a compact surface in a number of settings. Such a differential induces a singular-flat metric and a measured foliation on the underlying Riemann surface; the projection map to the leaf-space of this foliation is harmonic. These associated constructions (detailed in \S2)  provide alternative descriptions of these differentials: for example the Hubbard-Masur theorem (\cite{HubbMas}) asserts that a holomorphic quadratic differential on a given Riemann surface is uniquely determined by the measured foliation, and one of us (\cite{Wolf2}) showed that the lift of the differential to the universal cover can be recovered from the equivariant harmonic map to the leaf-space (an $\mathbb{R}$-tree) by taking its Hopf differential. Much of the power and usefulness of these tensors in the theory derive from these equivalent holomorphic, geometric and analytic perspectives. 

For surfaces with punctures, the theory extends to the case of ``integrable" quadratic differentials (with poles of order at most one) and ``Strebel" differentials with poles of order two  (see below), but is incomplete  in general.  In this article we develop these equivalences for certain meromorphic quadratic differentials with higher order poles,  whose corresponding singular-flat metrics of infinite area are analogous to the case of  Strebel differentials. \\

Let $\Sigma$ be a compact Riemann surface of genus $g\geq 1$ and $p$ a marked point. Our study is inspired by the following theorem of Strebel (see Theorem 23.5 in \cite{Streb}) which parameterizes a certain geometrically special subclass of the space of quadratic differentials with poles of order $2$:

\begin{jthm}[Strebel] Let $(\Sigma,p)$ be a pointed Riemann surface as above. Given a constant $c\in \mathbb{R}_+$, there is a unique  meromorphic quadratic differential with a pole of order $2$ at $p$ satisfying the following equivalent properties:
\vspace{.02in}

\begin{itemize}
\item \textnormal{(Analytic)} The residue at the pole is  $-c$, the critical graph connected,  and all the non-critical leaves of the (horizontal) measured foliation are closed. 

\item   \textnormal{(Geometric)} In the induced singular-flat metric, the punctured surface $\Sigma\setminus p$  is a half-infinite Euclidean cylinder of circumference $2\pi c$,  with an interval-exchange identification on its boundary that  yields the critical graph. 

\end{itemize}

\end{jthm}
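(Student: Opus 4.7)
The plan is to establish the equivalence of the two characterizations and then existence and uniqueness of $\phi$, modeling the argument on the harmonic map perspective developed elsewhere in the paper.

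The equivalence comes from local analysis at $p$. A meromorphic quadratic differential with a double pole of residue $-c$ at $p$ has local form $\phi = (-c^2/z^2 + O(z^{-1}))\,dz^2$ in a coordinate $z$ vanishing at $p$, and the natural coordinate $\zeta$ with $d\zeta = \sqrt{\phi}$ satisfies $\zeta = -ic\log z + (\text{holomorphic})$. This identifies a punctured neighborhood of $p$ with a semi-infinite Euclidean cylinder of circumference $2\pi c$. The hypothesis that every non-critical horizontal trajectory is closed propagates this cylindrical structure throughout $\Sigma\setminus p$, and connectedness of the critical graph forces the cylinder to be unique; slitting the cylinder along its boundary yields the interval-exchange description of the critical graph. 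The converse is immediate from the same local model.

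For existence I would use a harmonic map construction. Let the target be the ray $[0,\infty)$ obtained by collapsing horizontal circles in the model half-cylinder of circumference $2\pi c$. Fix a smooth reference map $u_0 : \Sigma\setminus p \to [0,\infty)$ with $u_0 = -c\log|z|$ near $p$ in a chosen coordinate, and minimize the relative energy $E(u) - E(u_0)$ over maps $u$ with $u - u_0$ of finite energy. Elliptic theory together with the convexity of the one-dimensional target produces a harmonic minimizer $u$ with the same logarithmic asymptotic as $u_0$. Its Hopf differential $\phi$ is a meromorphic quadratic differential on $\Sigma$ with a double pole at $p$, and the prescribed asymptotics of $u$ determine the principal part of $\phi$, so that its residue equals $-c$. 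Because $u$ takes values in a tree, the horizontal foliation of $\phi$ coincides with the foliation of $\Sigma\setminus p$ by level sets of $u$; these level sets are closed curves, and the critical graph is the single connected set $u^{-1}(0)$.

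Uniqueness will follow from convexity of energy along geodesics in the target: two admissible differentials $\phi_1,\phi_2$ yield harmonic maps $u_1,u_2$ with identical logarithmic asymptotics at $p$, so $u_1 - u_2$ extends to a bounded harmonic function on $\Sigma$ with a removable singularity at $p$, hence is constant, forcing $\phi_1 = \phi_2$. The principal technical obstacle is the combination of non-compactness of the domain with the infinite extent of the target: the absolute energy of any admissible map is infinite, so the variational problem must be set up in terms of relative energy, and the main delicate step is to ensure that the prescribed logarithmic asymptotics at $p$ are preserved along minimizing sequences so that the limit Hopf differential has the correct principal part. Once that analytic framework is in place the remaining steps are standard.
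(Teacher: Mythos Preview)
The paper does not give its own proof of this statement: Strebel's theorem is quoted in the introduction as a classical result (with a reference to Theorem~23.5 of \cite{Streb}) and serves only as motivation for the higher-order generalization in Theorem~\ref{main}. So there is no proof in the paper to compare against; your proposal is an attempt to specialize the paper's harmonic-map machinery (developed for poles of order $k+2\ge 3$) to the order-$2$ case.

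That said, your uniqueness argument has a genuine gap. The maps $u_1,u_2$ take values in the ray $[0,\infty)$, which has a boundary (or vertex) at $0$. Away from the preimages $u_i^{-1}(0)$ each $u_i$ is locally a real-valued harmonic function, but across those preimages it is not smooth: it ``folds'' at the vertex, and the Hopf differential has zeros there. Consequently $u_1-u_2$ is not a harmonic function on $\Sigma\setminus p$, and your removable-singularity argument does not apply. The correct replacement, as in \S\ref{sec: model uniqueness} of the paper, is to use that the \emph{distance function} $z\mapsto d_{[0,\infty)}(u_1(z),u_2(z))=|u_1(z)-u_2(z)|$ is subharmonic (by the NPC property of the target), hence a bounded subharmonic function on the parabolic surface $\Sigma\setminus p$ and therefore constant; one then argues the constant is zero using that the two spines $u_i^{-1}(0)$ must intersect when $g\ge 1$.

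In the existence part you assert that ``the critical graph is the single connected set $u^{-1}(0)$'' without justification. For the higher-order case this is exactly the issue the paper isolates as delicate, handled by the Topological Lemma (Lemma~\ref{toplem}) via the prong-duplicity property and the Maximum Principle. In the order-$2$ setting one must still rule out extra ring domains or zeros of the Hopf differential lying over interior points of the ray; a Maximum-Principle argument works here too, but it needs to be supplied. Your relative-energy minimization is a reasonable alternative to the paper's compact-exhaustion scheme, but the step you flag as ``the main delicate step'' (preservation of the logarithmic asymptotics along minimizing sequences) is precisely where the work lies and is not addressed.
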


The connected critical graphs (see \S2 for definitions) in these cases  are also called ``ribbon graphs", and have proved useful in works from Harer-Zagier (\cite{HarZag}) and Penner (\cite{PennDec}) to Kontsevich (\cite{Kont}) and others. 

In this article we generalize Strebel's result to the case of poles of higher order.\\

A \textit{half-plane differential} of order $k\geq 1$  on a pointed Riemann surface $(\Sigma,p)$ is a meromorphic quadratic differential on $\Sigma$ with a single pole at $p$ of order ($k+2$) and  a {connected} critical graph whose complement is a collection of $k$ Euclidean half-planes (what we shall call a \textit{half-plane structure}). Our main result parameterizes the {space} $\mathcal{HP}_k(\Sigma,p)$ of such half-plane differentials in terms of ``local data" at the pole measured with respect to a choice of coordinate chart.

The \textit{collapsing map} to the leaf-space of the horizontal foliation for a half-plane structure defines a harmonic map from $\Sigma\setminus p$  to a metric graph $X_k$ comprising $k$ infinite rays (prongs) meeting at a single vertex $O$. 
The asymptotic behavior of this map on the chosen coordinate chart  shall characterize the differential.  The ``model"  is that of  a \textit{$k$-planar-end}, which is a conformal punctured disk obtained by gluing $k$ Euclidean half-planes by isometries along their boundaries (see Definition \ref{pend}).  As we shall see, the parameters determining this model map are then the ``local data" at the pole.  Our main result is

\begin{thm}\label{main} Let $(\Sigma,p)$ be a pointed Riemann surface of genus $g\geq 1$, and $U\cong \mathbb{D}$ a choice of coordinate chart around $p$.  For any $k\geq 1$ there is a space $\P(k) \cong \mathbb{R}^{k}$ of ``$k$-planar-ends" and a family  $\M(k) \cong \mathbb{R}^k \times S^1$ of harmonic maps from $\mathbb{D}^\ast$ to $X_k$ obtained as their collapsing maps,  such that the following three spaces are homeomorphic:
\vspace{.02in}
\begin{itemize}

\item \textnormal{(Complex-analytic)} $\mathcal{HP}_k(\Sigma,p) = \{\text{half-plane differentials of order } k\newline  \text{ on } (\Sigma,p)\}$,
\vspace{.03in}

\item \textnormal{(Synthetic-geometric)}  $\mathcal{P}_k(\Sigma,p) = \{\text{singular-flat surfaces obtained by} \newline\text{ an identification on the boundaries  of } k \text{ Euclidean half-planes by an}\newline \text{interval-exchange map, such that the resulting surface is } \Sigma \setminus p \}$, 
\vspace{.03in}

\item \textnormal{(Geometric-analytic)} $\mathcal{H}_k(\Sigma,p)= \{\text{harmonic maps from } \Sigma \setminus p \text{ to } X_k \newline \text{ asymptotic}  \text{ (bounded distance) to some } m \in \M(k) \text{ on } U\setminus p  \}$.
\end{itemize}

Here, the maps between the spaces extend standard constructions:  $\textnormal{(C-a)} \to \textnormal{(S-g)}$ assigns the induced metric of the differential,
 $ \textnormal{(S-g)} \to \textnormal{(G-a)}$ assigns the collapsing-map, and 
 $ \textnormal{(G-a)} \to \textnormal{(C-a)}$ assigns the Hopf differential of the harmonic map. 

In particular, given any $m\in \M(k)$, there is a unique $q\in \mathcal{HP}_k(\Sigma,p)$ whose half-plane structure has a collapsing map asymptotic to $m$. 
This  yields a parameterization 
\begin{center}
$\Psi_U: \mathcal{HP}_k(\Sigma,p) \to \M(k) \cong \mathbb{R}^{k} \times S^1$.
\end{center}

The parameters for the space $\M(k)$ are  the \textnormal{(combinatorial)} data of edge-lengths of the metric graph that form the spine for the corresponding $k$-planar-end, and the additional $S^1$ factor   represents the angle at which the planar end sits relative to the chosen coordinate chart.

\end{thm}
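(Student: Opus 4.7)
The plan is to exhibit three natural maps forming a cycle
\[
\mathcal{HP}_k(\Sigma,p) \longrightarrow \mathcal{P}_k(\Sigma,p) \longrightarrow \mathcal{H}_k(\Sigma,p) \longrightarrow \mathcal{HP}_k(\Sigma,p),
\]
and show that each is continuous and that the cycle composes to the identity on each space; since the three spaces are second-countable, this forces all three maps to be homeomorphisms. The map (C-a)$\to$(S-g) is tautological: the induced singular-flat metric of a half-plane differential realizes $\Sigma\setminus p$ as an identification on the boundaries of $k$ Euclidean half-planes. The map (S-g)$\to$(G-a) assigns the collapsing map to the leaf-space of the horizontal foliation; on each half-plane this is the orthogonal projection to the boundary, hence harmonic with image a prong of $X_k$, and the identifications along the critical graph glue these to a harmonic map $\Sigma\setminus p \to X_k$ whose restriction to $U\setminus p$ realizes a $k$-planar-end, producing an element of $\M(k)$. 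The map (G-a)$\to$(C-a) takes the Hopf differential $\Phi(f) = 4(f_z,f_z)\,dz^2$; this is holomorphic off $f^{-1}(O)$, extends across $f^{-1}(O)$ as in the harmonic-maps-to-trees literature, and the asymptotic condition at $p$ forces exactly a pole of order $k+2$ with $k$ half-plane ends whose critical graph equals $f^{-1}(O)$.

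The analytic heart of the argument is the existence and uniqueness of a harmonic map asymptotic to a prescribed model $m \in \M(k)$. I would use the direct method relative to a reference: fix a smooth map $f_0:\Sigma\setminus p \to X_k$ that agrees with $m$ on $U\setminus p$, and minimize in the class
\[
\mathcal{C}(f_0) \;=\; \bigl\{\, f:\Sigma\setminus p \to X_k \;:\; d(f,f_0) \in L^\infty,\ E(f,f_0) < \infty\,\bigr\},
\]
where $E(f,f_0)$ is the difference of energy densities integrated over $\Sigma\setminus p$. Because $X_k$ is a NPC (nonpositively curved) metric graph, the Korevaar--Schoen theory applies: an energy-minimizing sequence has equicontinuity on compacta away from $p$, and the bounded-distance constraint prevents the limit from drifting off $m$ near the puncture. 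The subsequential limit is weakly harmonic on $\Sigma\setminus p$, hence smooth off $f^{-1}(O)$, and it matches $m$ at bounded distance on $U\setminus p$ by construction.

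Uniqueness is comparatively clean. If $f_1,f_2$ are two harmonic maps asymptotic to the same $m$, then $d(f_1,f_2)^2$ is subharmonic on $\Sigma\setminus p$ by Korevaar--Schoen (since the target is NPC), and bounded (because both $f_i$ sit at bounded distance from $m$ on $U\setminus p$, while $\Sigma\setminus U$ is compact). A removable-singularity argument at $p$, combined with the maximum principle, yields $d(f_1,f_2)\equiv 0$. Once existence and uniqueness are established for each $m\in \M(k)$, the map $\Psi_U$ is defined, and its inverse is continuous by the usual stability estimates for energy minimizers with respect to asymptotic data.

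The main obstacle, as anticipated, is the infinite-energy nature of the problem: $m$ itself has infinite Dirichlet energy near $p$, so one must work with the relative energy functional and verify it is well-posed, convex, and with a nonempty domain $\mathcal{C}(f_0)$. The delicate points are (a) showing that ``bounded distance from $m$'' is preserved under energy minimization — i.e.\ that minimizing sequences cannot escape along the puncture into another asymptotic model; (b) identifying the Euler--Lagrange equation of relative energy with ordinary harmonicity on compacta of $\Sigma\setminus p$; and (c) checking that the Hopf differential of the minimizer has precisely the pole order $k+2$ with connected critical graph — this I would handle by computing in the local coordinate near $p$ provided by the $k$-planar-end, where $m$ is explicit and its Hopf differential is a model half-plane differential, and then using continuity of the Hopf differential under bounded-distance perturbations to transfer the pole structure to the harmonic map's Hopf differential.
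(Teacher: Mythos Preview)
Your overall architecture matches the paper's, but the substance of the existence argument diverges, and several steps you flag as ``delicate'' are in fact the entire content of the theorem and are not addressed.

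\textbf{Existence.} The paper does not use a relative-energy functional. Instead it takes a \emph{symmetric} compact exhaustion $\Sigma_n$ of $\Sigma\setminus p$ and solves honest finite-energy Dirichlet problems $h_n:\Sigma_n\to X_k$ with boundary values equal to the model map $m$ on $\partial\Sigma_n$. The crux is a uniform energy bound for $h_n|_{\Sigma_0}$, obtained by sandwiching $\mathcal E(h_n)$ between $\mathcal E$ of a candidate built from $m$ and the least energy $e_n$ of a \emph{partially free} boundary problem on the annulus $A_n=\Sigma_n\setminus\Sigma_0$. The nontrivial ingredient is that $\mathcal E(c_n)\le e_n+K$ with $K$ independent of $n$; this is proved by doubling the free-boundary problem to a Dirichlet problem on the doubled annulus and applying a Fourier decay estimate on long cylinders, exploiting the $k$-fold symmetry of the exhaustion. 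Your point (a), that bounded distance from $m$ is preserved under minimization, is exactly what this machinery replaces; you have not supplied any mechanism to prevent a relative-energy minimizing sequence from drifting to a different asymptotic model, and in fact your functional $\int(|\nabla f|^2-|\nabla f_0|^2)$ is not obviously bounded below or even well-defined on $\mathcal C(f_0)$.

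\textbf{Connected critical graph.} Your point (c) is also where real work happens. ``Continuity of the Hopf differential under bounded-distance perturbations'' does not yield connectedness of the critical graph: connectedness is a global topological property that can be destroyed by arbitrarily small perturbations. The paper proves a Topological Lemma: a harmonic map $A\to\chi$ satisfying a \emph{prong-duplicity} condition (each interior prong point has exactly two preimages on $\partial^+A$) and mapping $\partial^-A$ to $O$ must have $f^{-1}(O)$ connected and containing all foliation singularities. This is applied to each $h_n$ and then passed to the limit via a zero-counting argument.

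\textbf{Uniqueness.} Your argument gives that $d(f_1,f_2)$ is constant (via parabolicity of $\Sigma\setminus p$, not removable singularity), but you have not said why the constant is zero. The paper uses $g\ge 1$: the preimages $f_i^{-1}(O)$ are spines of $\Sigma\setminus p$ and hence must intersect, forcing $d\equiv 0$.

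\textbf{(S-g)$\to$(G-a).} The collapsing map restricted to $U\setminus p$ is \emph{not} itself a model map, because $\partial U$ need not map to $O$. That it is bounded distance from a unique model map is a separate proposition in the paper, proved by solving Dirichlet-to-$O$ problems on annuli exhausting $U\setminus p$ and again invoking the Topological Lemma.
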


\textit{Remark.} For this case of higher order poles,  in contrast with Strebel's theorem stated earlier, specifying the ``residue"  at the pole no longer suffices to uniquely determine the half-plane differential. Instead, one needs to specify the model map, which is a choice of an element in $\M(k)$, at the pole. This involves $k+1$ additional parameters (the combinatorial edge-lengths and angle) that depend on the choice of a co-ordinate chart around the pole: these constitute the \textit{local data} at the pole.\\

\textit{Example.} We provide a quick example to illustrate the  well-known ``standard constructions" alluded to above (see \S2 for details and definitions):  for $k\geq 2$  consider the surface $\mathbb{C}\mathrm{P}^1 = \mathbb{C} \cup \{\infty\}$ with  the quadratic differential $z^{k-2}dz^2$. This has a pole of order $(k+2)$ at $\infty$, and is a half-plane differential  of order $k$: there is a single zero at $0\in \mathbb{C}$, and $k$ critical trajectories into $\infty$ that are rays from the zero at angles of $\{ \frac{2\pi \cdot j}{k} \}_{0\leq j\leq k-1}$.  
This \textit{complex-analytic} object then induces a singular-flat metric  $\lvert z^{k-2}\rvert \lvert dz^2\rvert$ comprising  $k$ half-planes: namely, in this metric  each sector between critical rays is isometric to a Euclidean half-plane  $\left(\{\Im \zeta > 0\}, \lvert d\zeta^2\rvert\right)$ by the map $z\mapsto z^{k/2} =\zeta$. This is the \textit{synthetic-geometric} half-plane structure;  the subset $\mathbb{C} \setminus \{0\}$ is then a $k$-planar end, being a conformal punctured disk built out of half-planes. 
The map $h:\mathbb{C} \to X_k$ defined by collapsing to the leaf-space of the horizontal foliation is defined locally by $\zeta \mapsto \Im\zeta$ where $\zeta$ is the coordinate on each half-plane as above. This is the \textit{geometric-analytic} harmonic map to the $k$-pronged tree,  whose restriction to $\mathbb{C} \setminus \{0\}$ is a ``model map". Finally, its Hopf differential $4\langle h_z, h_z \rangle dz^2$ (see Definition \ref{hopfd}) recovers the differential $z^{k-2}dz^2$ (up to a real constant multiple) returning us to the complex-analytic setting. 

The maps between the spaces in Theorem \ref{main} extend these basic constructions to the setting of  half-plane differentials on punctured Riemann surfaces of genus $g\geq 1$. \\


Descriptions of the spaces $\M(k)$ and $\P(k)$ appear in \S3, and details of the other spaces in Theorem \ref{main} appear in \S4. 
As mentioned earlier, the correspondences in Theorem \ref{main} are known in the context of integrable holomorphic quadratic differentials. Even for the case of differentials with poles of higher order, the correspondence between the ``complex-analytic" and  ``synthetic-geometric" descriptions can be easily derived. 
Such differentials have also been studied from the point of view of extremal problems (see, for example \cite{Kuz}).
 What this present work accomplishes is to identify the correct set of local data, at a pole of higher order, that suffices to describe all possible half-plane differentials on a given pointed Riemann surface.  The difficult part of the theorem is to show that any choice of such local data is (uniquely) achieved. For this we use the ``geometric-analytic" characterization of half-plane differentials, involving harmonic maps to the graph $X_k$.\\

The critical graph of a half-plane differential  in $\mathcal{HP}_k(\Sigma,p)$ forms a metric spine of $\Sigma \setminus p$, with $k$ infinite rays to the puncture. As we vary the local data, and the underlying (marked) pointed Riemann surface in $\mathcal{T}_{g,1}$, these data vary in the ``combinatorial" space $\mathcal{MS}^k_{g,1}$ of all possible such marked metric graphs (see \S6).  Conversely, changing the lengths of the edges in such a metric spine can change the underlying conformal structure as well as the local data at the pole. Hence, as a consequence of Theorem \ref{main}, we establish:

\begin{cor}\label{thm3}
The map $\Phi: \mathcal{T}_{g,1} \times \mathbb{R}^{k} \times S^1 \to \mathcal{MS}^k_{g,1}$ that assigns to a pointed Riemann surface and local data the metric spine of the corresponding  half-plane differential, is a homeomorphism.
\end{cor}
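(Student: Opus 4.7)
The plan is to build the inverse to $\Phi$ using Theorem \ref{main} as the main tool and then verify bicontinuity. Well-definedness and continuity of $\Phi$ are essentially immediate from the theorem: given $(\Sigma,p)\in \mathcal{T}_{g,1}$ and $m\in \M(k)$, the theorem yields a unique half-plane differential $q\in \mathcal{HP}_k(\Sigma,p)$, whose critical graph, equipped with the edge-lengths coming from the singular-flat metric together with the $k$ infinite rays to $p$, is a marked metric spine in $\mathcal{MS}^k_{g,1}$. Continuity of $\Phi$ follows from the homeomorphisms between the three spaces in Theorem \ref{main} and from the continuous dependence of the critical graph on the synthetic-geometric data.

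For injectivity, suppose $\Phi(\Sigma_1,p_1,m_1) = \Phi(\Sigma_2,p_2,m_2)$. The common marked ribbon graph $G$ already encodes the cyclic order of edges at each vertex; together with the prescribed edge-lengths this specifies the interval-exchange identification on the boundaries of the $k$ half-planes. Thus the two singular-flat surfaces (elements of $\mathcal{P}_k$) coincide, and Theorem \ref{main} returns the same $(\Sigma,p,m)$. For surjectivity I reverse the gluing: given $G \in \mathcal{MS}^k_{g,1}$, take $k$ Euclidean half-planes and pair intervals on their boundaries in the pattern dictated by the ribbon structure of $G$ with lengths prescribed by $G$. The quotient is a singular-flat surface homeomorphic to a genus-$g$ surface minus a point; its conformal class together with the distinguished puncture is the desired $(\Sigma,p)\in \mathcal{T}_{g,1}$, and Theorem \ref{main} then supplies the unique $m\in \M(k)$ whose spine is $G$.

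The main obstacle I expect is not analytic but topological bookkeeping: $\mathcal{MS}^k_{g,1}$ is naturally a stratified space, with strata indexed by the combinatorial type of ribbon graph and adjacent strata glued along faces where edge-lengths degenerate to zero. One must carefully verify that the reverse-gluing construction above interacts correctly with this stratification, so that $\Phi^{-1}$ is continuous across strata. Once this is set up, $\Phi$ is a continuous bijection between spaces of the same real dimension $6g+k-3$, and continuity of the inverse on each top stratum follows either by invariance of domain or, more directly, by noting that the Riemann surface structure and the element of $\M(k)$ produced by the gluing depend continuously on the edge-length parameters of $G$ via the homeomorphisms of Theorem \ref{main}.
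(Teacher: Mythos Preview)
Your approach is essentially the same as the paper's: construct the inverse by gluing half-planes along a given ribbon graph, check bijectivity, and finish with Invariance of Domain. Two points deserve tightening. First, you treat $\mathcal{MS}^k_{g,1}$ only as a stratified space and worry about continuity of $\Phi^{-1}$ across strata; the paper sidesteps this entirely by proving (in its Lemma~\ref{mgk}, via the Mulase--Penkava description of metric expansions) that $\mathcal{MS}^k_{g,1}$ is globally homeomorphic to $\mathbb{R}^{6g-6+k+2}\times S^1$, hence a genuine manifold of the correct dimension --- so Invariance of Domain applies directly and no strata bookkeeping is needed. Second, since both source and target carry an $S^1$ factor, the paper applies Invariance of Domain after lifting $\Phi$ to the universal covers (both homeomorphic to $\mathbb{R}^{6g+k-3}$); you should do the same, or else argue local injectivity plus properness, since Invariance of Domain in its usual form is stated for maps between open subsets of $\mathbb{R}^n$.
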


The space on the left hand side can be identified with the total space of half-plane differentials of order $k$, as a subset of the corresponding bundle of meromorphic quadratic differentials over $\mathcal{T}_{g,1}$ (see \S6 for details). The homeomorphism $\Phi$ may be thought of as a correspondence between Teichm\"{u}ller space with ``higher" decoration, and metric spines on the  punctured surface: from that perspective, this corollary generalizes the Penner-Strebel parametrization of decorated Teichm\"{u}ller space (see \cite{PennDec}). \\

The results in this paper generalize in the obvious way to the case of finitely many poles on a compact Riemann surface (and to $\mathbb{C}\mathrm{P}^1$ with more than two poles). The existence of half-plane differentials on a given Riemann surface with prescribed order and residue at the poles was first shown in \cite{Gup25}.  This article provides a different proof of that existence result, and completely answers the question of uniqueness raised in that paper.  One way to interpret our result is that we have completely answered the question: 
\begin{center}
\textit{In how many ways can you glue $k$ Euclidean half-planes by isometries along intervals on their boundaries, to obtain a \textit{given} punctured Riemann surface?}
\end{center}
Half-plane differentials arose in previous work of one of us (\cite{Gup3}) as representing ``limits" of singular-flat surfaces along Teichm\"{u}ller geodesic rays (see Figure 1 for an example).

\begin{figure}
  \centering
  \includegraphics[scale=0.53]{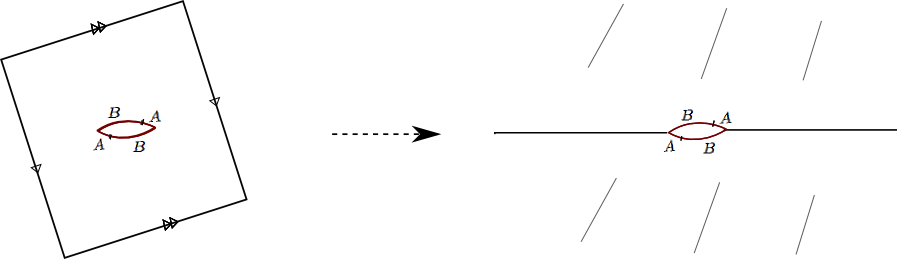}\\
  \caption{\textit{(An example of a half-plane structure.)} The genus-$2$ surface (on the left) is obtained by an interval exchange on a slit of irrational slope on a flat torus.  The corresponding Teichm\"{u}ller ray ``stretches" the singular-flat metric in the vertical direction. A half-plane differential of order $2$ on a punctured torus (on the right) then arises as the Gromov-Hausdorff limit  when basepoints are taken to lie on the critical graph. }
\end{figure}

An \textit{arbitrary} meromorphic quadratic differential with higher order poles may have a disconnected critical graph, which decomposes the surface into ``horizontal strips" and ``spiral" or ``ring" domains in addition to half-planes. In fact a \textit{generic} such differential would have \textit{no} critical trajectories between zeros (\textit{viz.} are saddle-free) and such a decomposition comprises only horizontal strips and half-planes (see also \cite{BridSmi}). 
The present work thus concerns trajectory structures in the  ``most degenerate" case.  In forthcoming work we aim to handle \textit{all} trajectory structures and provide a full generalization of the Hubbard-Masur theorem (\cite{HubbMas}).\\

\textbf{Outline of the paper.} The main result of the paper is the correspondence between the ``complex-analytic" and ``geometric-analytic" spaces in Theorem \ref{main}. The technique of using harmonic maps to graphs to produce prescribed holomorphic quadratic differentials was developed by one of us in \cite{Wolf1}, \cite{Wolf2}. A principal difficulty in the present work is that the harmonic maps in the ``geometric-analytic" characterization have infinite energy. The argument for proving the existence of such a map involves taking a compact exhaustion of the punctured surface, and an appropriate sequence of harmonic maps that converges to it. The convergence is guaranteed by \textit{a priori} energy bounds obtained by comparing the maps with the solutions of a certain ``partially free boundary" problem on an annulus. This occupies an entire section (\S4): we prove that under an additional assumption of symmetry of the annuli the approximating maps (defined on the compact exhaustion) have uniformly controlled behavior.  

A feature of this paper is that we refrain from working in the universal cover: our arguments exploit the ``half-plane structure" on the surface.  In particular, a  crucial challenge is to control the topology of the foliations of the Hopf differentials of the limiting harmonic maps -  for this we use properties of the foliations that are specific to a half-plane structure (see, for example, the Topological Lemma in \S3.2). 

After some preliminaries in \S2, we introduce the notion of planar-ends and their model maps in \S3. After the discussion of the approximating maps in \S4, the proof of Theorem \ref{main} is assembled in  \S5. In \S6 we introduce the space of metric spines  $\mathcal{MS}^k_{g,1}$ and provide the proof of Corollary \ref{thm3}.\\

\textbf{Acknowledgements.} The first author was supported by Danish Research Foundation and the Centre for Quantum Geometry of Moduli Spaces (QGM).  He is grateful to Yair Minsky and Richard Wentworth for helpful conversations, and to Maxime Fortier-Bourque for pointing out \cite{Kuz} and related references. The second author gratefully acknowledges support from the U.S.~National Science Foundation (NSF) through grant DMS-1007383. Both authors gratefully acknowledge support  by NSF grants DMS-1107452, 1107263, 1107367 ``RNMS: GEometric structures And Representation varieties" (the GEAR network) as well as the hospitality of MSRI (Berkeley), where some of this research was concluded. \\

\section{Preliminaries}

In this section we provide some basic background that is relevant to the rest of the paper, including some analytical results concerning harmonic maps that we shall be using later.

\subsection{Quadratic differentials}
In this paper $(\Sigma,p)$ shall denote a Riemann surface of genus $g\geq 1$ and a marked point $p$.

\begin{defn} A \textit{meromorphic quadratic differential} with a pole of order $m\geq 1$ at $p$ is a $(2,0)$-tensor that  is locally of the form $q(z)dz^2$ where $q(z)$ is holomorphic away from $p$, while at $p$ has a pole of order $m$.\end{defn}

\textit{Remark.} By the Riemann-Roch theorem, the meromorphic quadratic differentials on $(\Sigma,p)$ with a pole of order at most $m\geq 1$ is  a complex vector space of dimension $3g-3 + m$.\\

We shall often be thinking of this analytic object in terms of the geometry it induces on the Riemann surface:

\begin{defn}[Singular-flat metric]  The \textit{singular flat metric} induced by a meromorphic quadratic differential is the conformal metric locally defined as $\lvert q(z) \rvert \lvert dz^2\rvert$ (the singularities are at the zeros of the differential). The \textit{horizontal} and \textit{vertical} components of the distance along an arc $\alpha$ shall be the absolute values of the real and imaginary parts, respectively,  of the complex-valued integral $\displaystyle\int_\alpha \sqrt{q}$.  \end{defn}

\begin{defn}[Horizontal  measured foliation] \label{defn: measured foliation}
The \textit{horizontal  foliation} induced by such a differential is the singular foliation on the surface obtained locally by pulling back the horizontal  lines on the $\xi$-plane where the coordinate change $z\mapsto \xi$ transforms the differential to $d\xi^2$. Moreover this is a measured foliation (see \cite{FLP}),  equipped with a measure on transverse arcs coming from the imaginary component distance along such an arc. A similar definition holds for the \textit{vertical foliation} - this time we pull back the vertical lines by the change-of-coordinate map. The foliation around a zero of the differential has a ``branched" structure that we shall refer to as a \textit{prong-singularity}; the order of the zero shall be referred to as the \textit{order} of such a prong-singularity. Around a pole of order $(k+2)$, however, the foliation has a structure with $k$ ``petals" (see Figure 2). 
\end{defn}

\begin{figure}
  \centering
  \includegraphics[scale=0.35]{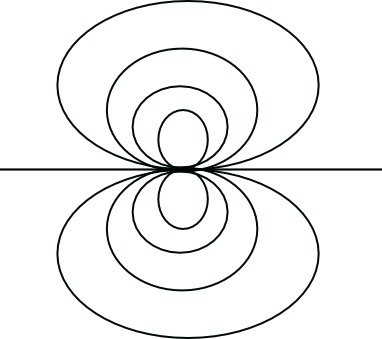}\\
  \caption{The differential $dz^2$ on $\mathbb{C}$ has a pole of order $4$ at infinity, this figure shows the horizontal foliation around the pole.  }
\end{figure}

In classical Teichm\"{u}ller theory, meromorphic quadratic differentials with a pole of order at most $1$ at $p$ appear as the cotangent space to the Teichm\"{u}ller space $\mathcal{T}_{g,1}$ at  $(\Sigma,p)$. These are called \textit{integrable} quadratic differentials as the area form $|\phi| \lvert dz^2\rvert$ is integrable on $\Sigma$, that is,  the total singular flat area is finite. For quadratic differentials with poles of order $m\geq 2$, the induced singular flat metric has \textit{infinite} area. We briefly recall the metric structure around the poles (see \cite{Streb}, \cite{Gup25} for details):
\begin{itemize}
\item an infinite cylinder for $m=2$,
\item a neighborhood of infinity of a $k$-\textit{planar end} with $k=m-2$ for $m>2$ (see Definition \ref{pend}) comprising $k$ Euclidean half-planes. 
\end{itemize}

\subsection{Half-plane differentials}
In this article we shall be concerned with quadratic differentials with poles \textit{of higher order} (that is, of order $m \geq 3$), and in the special case that the critical graph (see the following definition) is connected.

 As mentioned in \S1, we call such a differential a \textit{half-plane differential} of order $k$, where $k=m-2$, and its induced singular-flat geometry a \textit{half-plane structure}. 

\begin{defn}[Critical graph]\label{critg} The \textit{critical graph} of a holomorphic quadratic differential comprises the zeros of the differential as well as the horizontal  trajectories (leaves of the horizontal  foliation) emanating from them. 
\end{defn}

In the case of higher-order poles,  when the critical graph is connected, that graph forms a spine of the punctured surface. It follows from the classification of the trajectory structure of horizontal  foliations (see \cite{Streb}) that the complement of such a graph is then a collection of Euclidean upper half-planes with the horizontal  foliation precisely the horizontal  lines on each half-plane.  (In particular, in this case spiral domains,  ring domains or horizontal  strips do not occur.) Retracting each half-plane to its boundary along the vertical direction then defines a retraction of the punctured surface to the spine. This describes the induced {half-plane structure}.

\subsection{Harmonic maps to graphs} 

Our main analytical tool will be harmonic maps from the punctured Riemann surface to the $k$-pronged graph $X_k$. For the general theory of harmonic maps to singular spaces, we refer to \cite{KorSch1}, \cite{KorSch2} and \cite{GroSch}. For the discussion in this paper, the following definition will suffice:

\begin{defn}\label{har} A harmonic map $h:\Sigma \setminus p \to X_k$ is a continuous and weakly differentiable map with $L^2$-derivatives  that is a critical point of the energy-functional for any compactly supported variation.  (Note that by the above regularity assumptions the energy density is locally integrable.) Alternatively, for this case where the target is a tree, we equivalently require that the pullback of germs of convex functions should be subharmonic (see \S3.4 of \cite{FarWol} and Theorem 3.8 of \cite{DaWen}).
\end{defn}

The relation to quadratic differentials comes from the following construction:

\begin{defn}\label{hopfd} The Hopf differential of a harmonic map $h:\Sigma \setminus p \to X_k$ is the $(2,0)$-part of the pullback of the metric on $X_k$, namely is the quadratic differential locally of the form $4\langle h_z, h_z \rangle dz^2$ (where the scalar product is with respect to a choice of conformal background metric). \end{defn}

The main observation (see \cite{Wolf2}, and  Lemma 1 of \cite{Schoen}) that we use throughout in this paper, is that the Hopf differential is holomorphic. Moreover, for a $1$-dimensional target as in our case,  the harmonic map projects along the leaves of the \textit{vertical} foliation of the Hopf differential. In other words, the level sets of the harmonic map form the vertical foliation of the Hopf differential.

\begin{defn}[Collapsing map]\label{collmap} Given a half-plane differential $q \in \mathcal{HP}_k(\Sigma,p)$ its \textit{collapsing map} $c= c(q):\Sigma\setminus p \to X_k$ collapses the horizontal  foliation of $q$ to its leaf-space, which is a tree with a single vertex $O$ and $k$ infinite-length prongs. The critical graph is mapped to the vertex $O$. Moreover, the collapsing map is harmonic, and its Hopf differential is $-q$ (up to a positive real multiple) . We shall sometimes refer to the horizontal foliation above as the \textit{collapsing foliation}. 
\end{defn}

\textit{Remark.} The extra negative sign above arises because of our choice of realizing the \textit{horizontal} foliation instead of vertical. This agrees with the  convention chosen in \cite{HubbMas}, but is opposite to that of \cite{Wolf2}. \\

We note two further analytical results. First, a standard argument using the chain-rule (see \cite{Jost1} equation (5.1.1) for Riemannian targets and \cite{KorSch1} for tree-targets)  yields:

\begin{lem}\label{dist}
Let  $(T,d)$ be a (locally finite) metric tree with a basepoint $O$.  Then for a harmonic map $h:\Sigma \setminus p  \to T$, the distance function from the basepoint, defined on $\Sigma\setminus p$ as  $d(h(z), O): \Sigma\setminus p \to \R$, is subharmonic. 
\end{lem}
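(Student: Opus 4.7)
The plan is to reduce the statement directly to the equivalent characterization of harmonic maps to trees provided in Definition \ref{har}: the pullback of (germs of) convex functions by a harmonic map to a tree is subharmonic. It therefore suffices to show that the function $f:T\to\R$ defined by $f(x)=d(x,O)$ is convex on $T$.

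To verify convexity, consider an arbitrary unit-speed geodesic segment $\gamma:[a,b]\to T$. Since $T$ is a tree, the geodesic in $T$ from $O$ to any point of $\gamma([a,b])$ meets $\gamma$ at a single closest point $\gamma(t_0)$ (possibly an endpoint), and consequently
\[
d(\gamma(t),O)\;=\;|t-t_0|\,+\,d(\gamma(t_0),O).
\]
This is piecewise linear in $t$ with slopes $\pm 1$, hence convex. As the conclusion holds along every geodesic, $f$ is convex on $T$, and a small neighborhood of any point of $T$ then supplies a convex germ to which the characterization in Definition \ref{har} applies. Composing then yields that $d(h(\cdot),O)=f\circ h$ is subharmonic on $\Sigma\setminus p$, as required.

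The ``chain rule'' picture implicit in the statement is the following sanity check. On the open set $\Sigma\setminus h^{-1}(V)$, where $V\subset T$ is the discrete set of vertices (including $O$ and any other branching points), the map $h$ lands in the smooth $1$-dimensional interior of a single edge, where $f$ is smooth with $f''\equiv 0$; the classical chain rule then gives $\Delta(f\circ h)=f''(h)\lvert dh\rvert^2+f'(h)\Delta h=0$, using harmonicity of $h$. At points of $h^{-1}(V)$, a supporting affine function $\ell\leq f$ with $\ell(h(z))=f(h(z))$ pulls back to a harmonic function $\ell\circ h\leq f\circ h$ with equality at $z$, yielding the sub-mean-value inequality weakly at such points.

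The main obstacle, if there is one, is precisely this bookkeeping at the vertex set of $T$, where $f$ fails to be smooth; but the convex-function formulation of harmonicity in Definition \ref{har} is tailored to subsume exactly such behavior, so no further work is required beyond the convexity check above.
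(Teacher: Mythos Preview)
Your argument is correct and is precisely the approach the paper sets up: the paper does not give a proof of this lemma but merely cites the chain-rule computation in \cite{Jost1} and \cite{KorSch1}, having already recorded in Definition~\ref{har} the equivalent characterization that pullbacks of convex germs are subharmonic. Your verification that $x\mapsto d(x,O)$ is convex along geodesics (piecewise linear with slopes $\pm1$), together with the supporting-affine-function remark at vertices, fills in exactly what the paper leaves implicit.
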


Second, the following convergence criteria follows from a standard argument using, for two-dimensional domains, the Courant-Lebesgue Lemma, a uniform lower bound on the injectivity radius of the domain surface $\Sigma$ and Ascoli-Arzela. (See \cite{Jost1} or \cite{Wolf2} for dimension two; for an analogous result that holds for higher-dimensional domains see \cite{KorSch1}.)

\begin{lem}\label{alem1} Let  $\Sigma$ be a compact Riemann surface (possibly with boundary) and let $(T,d)$ be a (locally finite) metric tree. For $i\geq 1$ let $h_i: \Sigma  \to T$ be a sequence of harmonic maps:\\
(a) with uniformly bounded energy and\\
(b) whose images have uniformly bounded distance from a fixed base-point on $T$.\\
Then there is a convergent subsequence with a limiting harmonic map $h:\Sigma \to T$.
\end{lem}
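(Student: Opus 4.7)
The plan is a standard Arzel\`a--Ascoli argument, whose only nontrivial ingredient is uniform equicontinuity of the sequence $\{h_i\}$. First, hypothesis (b) together with local finiteness of $T$ confines every image $h_i(\Sigma)$ to a fixed compact subtree $T_0 \subset T$ (since a ball of radius $R$ in a locally finite tree meets only finitely many edges). So uniform boundedness in a compact metric target is automatic, and the issue is purely equicontinuity.

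For equicontinuity I would invoke the Courant--Lebesgue Lemma: fix $z_0 \in \Sigma$ and work in a conformal disk chart of radius bounded below by the (positive) injectivity radius of $\Sigma$. Applying Cauchy--Schwarz to the bounded energy on each annulus $\{r < |z-z_0| < \sqrt{r}\}$ yields a radius $\rho_i \in (r,\sqrt{r})$ for which the length of $h_i\circ \gamma_{\rho_i}$ is at most $C\sqrt{E_0/\log(1/r)}$, where $E_0$ is the energy bound from (a) and $\gamma_{\rho_i}$ is the circle of radius $\rho_i$ around $z_0$. In particular the oscillation of $h_i$ on $\gamma_{\rho_i}$ is at most some $\epsilon(r)$ with $\epsilon(r)\to 0$ as $r\to 0$, uniformly in $i$.

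Next, I would propagate this boundary oscillation bound into the disk using the tree structure. Distance functions $d(\cdot, y): T \to \R$ are convex, so by Definition \ref{har} the pullback $d(h_i(\cdot), y)$ is subharmonic on $\Sigma$. Fix any $y$ in $h_i(\gamma_{\rho_i})$; by the maximum principle the oscillation of $h_i$ on the closed disk $\overline{B(z_0,\rho_i)}$ equals that on the boundary circle, hence is at most $\epsilon(r)$. This gives equicontinuity at $z_0$ uniformly in $i$ (and, by a standard boundary-straightening adaptation, at boundary points of $\Sigma$ as well). Arzel\`a--Ascoli now extracts a uniformly convergent subsequence $h_{i_k} \to h : \Sigma \to T_0$ with $h$ continuous.

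Finally, to see the limit $h$ is harmonic, I would use the tree-target characterization in Definition \ref{har}. For any germ of convex function $f$ on $T$, the pullback $f\circ h_{i_k}$ is subharmonic; since $f$ is locally Lipschitz on the one-dimensional target, $f \circ h_{i_k} \to f \circ h$ uniformly on compact subsets of the relevant preimage, and uniform limits of subharmonic functions are subharmonic. Hence $f \circ h$ is subharmonic, so $h$ is harmonic. The main obstacle I anticipate is the equicontinuity step: making the Courant--Lebesgue estimate and its propagation into the disk work uniformly in $i$ and across $\Sigma$, including near $\partial \Sigma$ if present. Everything else is routine.
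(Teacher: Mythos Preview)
Your proposal is correct and follows exactly the standard argument the paper invokes (Courant--Lebesgue plus a uniform injectivity-radius lower bound plus Arzel\`a--Ascoli, with harmonicity of the limit coming from stability of subharmonicity under uniform limits). The paper does not give a detailed proof but only cites this argument; you have filled it in accurately, with one harmless imprecision: the maximum principle gives that the oscillation on the closed disk is at most \emph{twice} that on the boundary circle (not equal to it), which of course still tends to zero.
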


\subsection{More examples} We have already given a family of examples on the genus-zero surface $\mathbb{C}P^1$ in \S1, following the statement of Theorem \ref{main}.

More generally, a one-parameter family of differentials in  $\mathcal{HP}_k(\mathbb{C}P^1,\infty)$ for an even integer $k\geq 4$, is given by 
\begin{center}
 $(z^{k-2} + iaz^{k/2}) dz^2$ \hspace{.1in} for $a\in \mathbb{R}$.
 \end{center}
Here the critical graph has finite-length edges in addition to the $k$ infinite prongs (see \cite{HubbMas} or  Lemma 1.1 of \cite{AuWan}).  For more examples, see \S2 of \cite{Gup25}.

\section{Planar ends and model maps}

In this section we introduce the space of $k$-planar ends $\P(k)$ and their harmonic collapsing maps to the graph $X_k$ (that has a single vertex $O$ and $k$ infinite-length prongs). Together with an angle of rotation, these will form the space of model maps $\M(k)$ around the puncture $p$, for the collapsing maps of half-plane differentials on the punctured Riemann surface $\Sigma\setminus p$.  In \S3.2 we highlight a topological property of these model maps that shall be useful later. 

\subsection{Definitions.} Throughout, we fix an integer $k\geq 1$.

\begin{defn}[Planar end]\label{pend}  A \textit{$k$-planar end}  $P$ is  conformally a punctured disk with a singular-flat metric, obtained by gluing $k$ Euclidean upper half-planes to each other by an interval-exchange of a finite number of subintervals of their boundaries, where conical singularities of angle $\pi$ in the resulting metric are not allowed. (see Figure 3). 
\end{defn}

 \begin{figure}[h]
  \centering
  \includegraphics[scale=0.6]{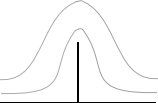}\\
  \caption{A fold is not allowed in the gluing on the boundaries.   }
\end{figure}

The unglued intervals form the boundary of the resulting punctured disk (See Figure 4.).  The resulting surface has infinite area, with $k$ rays incident to the puncture at infinity;  each ray corresponds to a pair of rays on the boundaries of two half-planes that are identified by the gluing.

 (Note that since there are only finitely many intervals, each half-plane boundary has two half-infinite rays, and because the result of the gluing is to be a punctured disk, the Definition~\ref{pend} implicitly requires each half-infinite ray to be glued to another in a manner that induces a cyclic ordering on the half-planes.)
 
 \begin{definition} \label{defn: P(k)}
 We define $\P(k)$ to be the space of planar ends, in the topology induced by the metric: two planar ends are close if there is a bi-Lipschitz map of small distortion between them.
 \end{definition}

\begin{prop}\label{pk} The space $\P(k)$ of $k$-planar ends is homeomorphic to $\mathbb{R}^{k-1} \times \mathbb{R}_+ \cong \mathbb{R}^k$.
\end{prop}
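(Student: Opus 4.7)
My plan is to parametrize $\mathcal{P}(k)$ using the metric data of the spine of each planar end, treating the combinatorial stratification as the main technical device.

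First I would analyze the spine $G$ of a $k$-planar end $P$ — the finite metric graph formed as the image in $P$ of the non-ray portions of the $k$ half-plane boundaries under the prescribed identifications. The planar end $P$ deformation retracts onto $G$ (by retracting each half-plane onto its boundary and each ray onto its initial vertex); since $P$ is topologically a punctured disk, hence homotopy equivalent to $S^1$, the graph $G$ is connected with $b_1(G)=1$, giving $|V(G)|=|E(G)|$. The no-angle-$\pi$ hypothesis forces $d_v+r_v\geq 2$ at every vertex (where $d_v$ is the spine degree and $r_v$ the number of rays at $v$); after collapsing smooth vertices ($d_v=2$, $r_v=0$, corresponding to total angle $2\pi$), a direct degree-sum computation based on $\sum_v d_v=2|E(G)|$ and $\sum_v r_v=k$ yields $|E(G)|=|V(G)|\leq k$, with equality precisely in the ``generic'' combinatorial type where $d_v+r_v=3$ at every vertex.

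Next I would parametrize each combinatorial stratum: for each admissible combinatorial type $\tau$ (a graph with cyclic arrangement of the $k$ rays satisfying the degree conditions above), the moduli space $\mathcal{P}_\tau(k)$ of planar ends with spine of type $\tau$ is the open cell $\mathbb{R}_+^{|E(\tau)|}$ of positive edge-length tuples. Existence follows by direct construction — take $k$ half-planes with boundaries subdivided per $\tau$ and the specified lengths, and perform the prescribed interval-exchange identifications — and uniqueness follows because a planar end determines the edge lengths of its spine. In particular the top-dimensional strata, indexed by the finite list of generic combinatorial types, are each $k$-dimensional open cells.

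Finally I would assemble $\mathcal{P}(k)=\bigcup_\tau \mathcal{P}_\tau(k)$ into the claimed global parametrization. The idea is to introduce $k$ continuous global coordinates on $\mathcal{P}(k)$ — for instance, the $k$ lengths $\ell_1,\ldots,\ell_k$ of the finite portions of the boundaries of the cyclically-ordered half-planes $H_1,\ldots,H_k$, with, say, the total spine length $\tfrac{1}{2}\sum\ell_i$ playing the role of the distinguished $\mathbb{R}_+$-factor and the remaining data encoding the relative shape — and verify that they induce a homeomorphism $\mathcal{P}(k)\to\mathbb{R}^{k-1}\times\mathbb{R}_+$. The main obstacle is this last step: showing that the combinatorial strata glue together into a manifold homeomorphic to $\mathbb{R}^{k-1}\times\mathbb{R}_+$ rather than into a more singular space. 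This requires a careful enumeration of the admissible combinatorial types, an analysis of their incidence pattern under edge-collapse (where two vertices merge and possibly the ray-distribution changes), and a verification that the proposed global coordinates are well-defined, continuous, and surjective across all strata.
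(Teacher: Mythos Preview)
Your approach is in the same spirit as the paper's: both parametrize $\P(k)$ via the combinatorics and edge-lengths of the metric spine, and both recognize that the different combinatorial types form cells that must be glued together. The paper, however, does not attempt to construct explicit global coordinates as you do. Instead it observes that every spine arises as a \emph{metric expansion} (Definition~\ref{mexp}) of a single base graph $\Gamma_a$ --- a loop of length $a$ with $k$ infinite rays attached at a single vertex of valence $k+2$ --- and then invokes a result of Mulase--Penkava (quoted as Proposition~\ref{mex}) that the space of metric expansions at a vertex of degree $d$ is homeomorphic to $\mathbb{R}^{d-3}$. With $d=k+2$ this gives $\mathbb{R}^{k-1}$ for each fixed $a$, and letting $a$ range over $\mathbb{R}_+$ yields $\mathbb{R}^{k-1}\times\mathbb{R}_+$ at once.

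That cited result is exactly what dissolves the obstacle you flag in your last paragraph: the Mulase--Penkava argument (via convex functions on the vertices of a $d$-gon and their induced triangulations, modulo the action of affine maps) packages the enumeration of combinatorial types and their incidences under edge-collapse into a single clean statement. Your proposed coordinates $(\ell_1,\ldots,\ell_k)$ are natural, but to push them through you would still have to check, on each top-dimensional stratum, that the linear map from the $k$ edge-lengths to the $k$ half-plane boundary-lengths is invertible, and then that the images of all strata tile $\mathbb{R}_+^k$ compatibly --- which amounts to reproving the Mulase--Penkava result by hand for this particular family of graphs.
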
 

\medskip
The idea of the proof (given below in detail) is as follows: the gluings of the boundaries of the $k$ half-planes yields a planar metric graph that forms a spine for the punctured disk (and includes its boundary).  We describe the parameter space of such graphs, using a result in \cite{MulPenk}. Briefly, there are different combinatorial possibilities of the metric graph, one obtainable from another by Whitehead moves along the finite-length edges. For each combinatorial type, the lengths of the finite edges parametrize a  simplicial cone,  and these piece together to give a space homeomorphic to $\mathbb{R}^{k-1} \times \mathbb{R}_+$, where the positive real factor determines the overall ``scaling".\\

\begin{figure}
  \centering
  \includegraphics[scale=0.27]{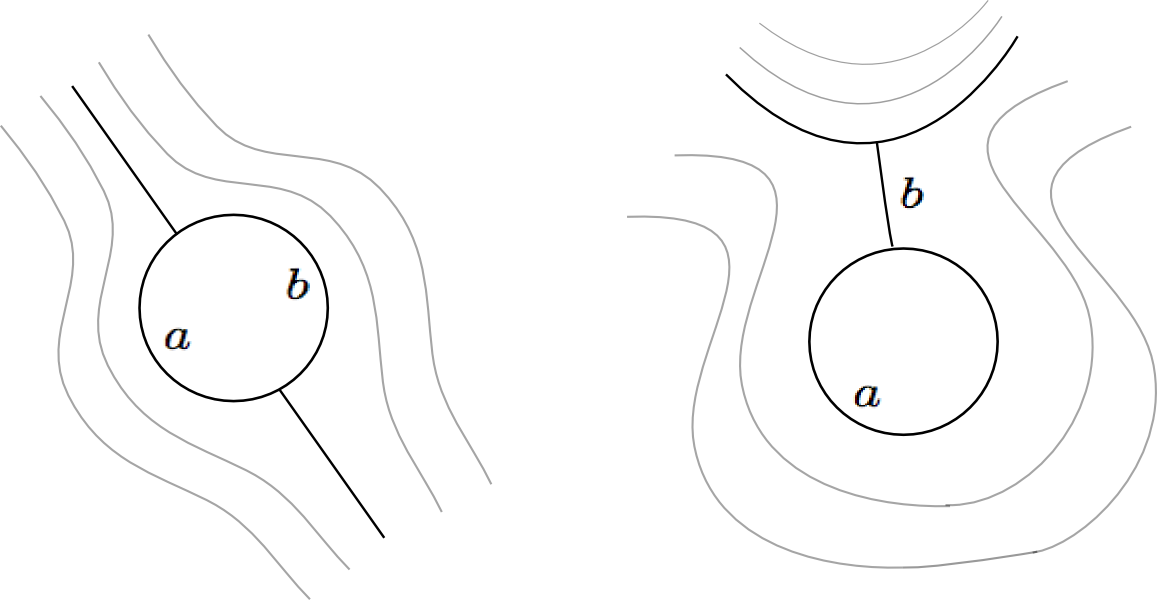}\\
  \caption{A $2$-planar end has two possible patterns of metric spines. The non-negative edge lengths $(a,b)$ for each pattern parametrize cells that fit together to form $\mathbb{R}^2$. (See Propn. \ref{pk}.) }
\end{figure}

Following \cite{MulPenk} we first define:

\begin{defn}\label{mexp} A \textit{metric expansion}  of a graph $G$ at a vertex of degree $d\geq 3$ is a new graph obtained by replacing the vertex by a tree (with each new vertex of degree greater than two) that connects with the rest of the graph.  (See Figure 5 for an example.) 
\end{defn}

The following proposition is culled from Theorem 3.3 of \cite{MulPenk}:

\begin{prop}\label{mex} The space of metric expansions of a $d$-pronged tree $X_d$ is homeomorphic to $\mathbb{R}^{d-3}$.
\end{prop}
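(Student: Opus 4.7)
The plan is to give the space of metric expansions of $X_d$ a polyhedral cell structure indexed by planar trees, and then identify the entire space with the open cone over a sphere.

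First, a metric expansion of $X_d$ is a metric tree connecting the $d$ cyclically-ordered prongs in which every internal vertex has valence at least three. A trivalent planar tree with $d$ cyclically-ordered leaves has exactly $d-3$ internal edges, so each trivalent combinatorial type contributes an open top-dimensional cone $\mathbb{R}_{>0}^{d-3}$ of metric expansions, parametrized by positive internal edge lengths. When one internal edge degenerates to length zero it contracts, producing a tree with a single $4$-valent vertex; since any such $4$-valent vertex admits exactly two non-crossing trivalent resolutions (arising from the cyclic order of its neighbors), every codimension-one face is shared by precisely two top cones. More generally, codimension-$j$ faces correspond to trees with $j$ total contractions, and the incidence pattern is governed by the permissible trivalent resolutions of each degenerate vertex.

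Next I would exploit the natural $\mathbb{R}_{>0}$-action by simultaneous scaling of all internal edge lengths. Its unique fixed point is the unexpanded tree $X_d$ itself (all internal edges of length zero), and the action is free on the complement, so the entire space is homeomorphic to the open cone on its ``unit-length'' slice — the locus where the sum of internal edge-lengths equals one. The top cells of this slice are $(d-4)$-simplices indexed by the $C_{d-2}$ trivalent planar trees with $d$ cyclically-ordered leaves; these are in bijection with triangulations of a convex $d$-gon, and the gluing of cells under Whitehead moves matches the (dual) incidence pattern on the Stasheff associahedron $K_{d-1}$. Since $K_{d-1}$ is a convex polytope of dimension $d-3$, its boundary $\partial K_{d-1}$ is homeomorphic to $S^{d-4}$, so the unit slice is homeomorphic to $S^{d-4}$ and the whole space to the open cone on $S^{d-4}$, which is $\mathbb{R}^{d-3}$.

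The principal obstacle is to rigorously identify the unit slice with (the dual cell structure on) $\partial K_{d-1}$, for which one must verify that the combinatorics of degenerating tree edges is compatible with the face-inclusion structure of the associahedron. As a sanity check, the cases $d = 3, 4, 5$ reduce respectively to a point, two half-lines joined at the origin (giving $\mathbb{R}$), and five closed $1$-cells glued into a pentagonal cycle whose open cone is $\mathbb{R}^2$. An alternative approach that avoids reference to the associahedron would be induction on $d$: drop a distinguished prong, straighten any resulting degree-two vertex, and record the attachment data as a continuous $\mathbb{R}$-valued coordinate; this forgetful construction should realize the space for $X_d$ as an $\mathbb{R}$-bundle over the corresponding space for $X_{d-1}$, with care needed at the loci where the dropped prong is attached exactly at a vertex of the smaller tree.
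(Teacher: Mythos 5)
Your approach is correct in outline but takes a genuinely different route from the paper's. The paper (following \cite{MulPenk}, Theorem 3.3) gives a direct linear-algebraic parametrization: a metric expansion is dual to a regular subdivision of a convex $d$-gon, which is encoded by a height function on the $d$ vertices up to the addition of an affine function on $\mathbb{R}^2$; the space of metric expansions is then identified with the quotient $\mathbb{R}^d/\operatorname{Aff}(\mathbb{R}^2,\mathbb{R}) \cong \mathbb{R}^{d-3}$ in one stroke. You instead build the polyhedral cell decomposition by combinatorial type, pass to the unit slice of the scaling action, and invoke the fact that the Stasheff associahedron $K_{d-1}$ is a convex polytope of dimension $d-3$ to recognize the slice as a sphere $S^{d-4}$, hence the full space as an open cone $\cong \mathbb{R}^{d-3}$. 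The two arguments are combinatorially cousins (the secondary polytope of the $d$-gon \emph{is} the associahedron, so the same complex is lurking in both), but they differ in what must be verified: the paper's height-function argument avoids any appeal to the nontrivial fact that $K_{d-1}$ is a polytope and directly produces global coordinates, while your argument buys a concrete piecewise-linear cell structure on the space at the cost of needing the associahedron's polytopality, or an inductive substitute, to conclude that the link is a sphere (you honestly flag this as the gap to close). Your degenerate-case checks for $d=3,4,5$ are right and match the paper; the alternative inductive ``drop a prong'' sketch at the end, if carried out, would in fact be closer in spirit to the paper's direct-parametrization philosophy.
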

\begin{proof}[Sketch of the proof]
A generic metric expansion replaces the vertex with a tree that is dual to a triangulation of a $d$-gon on the plane, which in turn arises as the convex hull of a function on the vertices. Postcomposing such a function with the restriction of an affine map in $\text{Aff}(\mathbb{R}^2,\mathbb{R}) \cong \mathbb{R}^3$ to the vertices does not affect the convex hull. Hence the space of metric expansions is obtained by quotienting out the space of functions on the $d$-vertices by the space of these affine maps, resulting in $\mathbb{R}^{d-3}$.
\end{proof}

\begin{proof}[Proof of Proposition \ref{pk}]
For each $a \in \mathbb{R}_+$, consider the single-vertex graph $\Gamma_a$ consisting of a loop of length $a$ and $k$ (infinite-length) rays from the vertex.  Next consider the space of metric expansions of $\Gamma_a$ for each $a\in \mathbb{R}_+$. Since the vertex has valence $(k+2)$, 
(see Figure 5 for the case $k=3$), by Proposition \ref{mex} the space of metric expansions of $\Gamma_a$ is homeomorphic to $\mathbb{R}^{k-1}$ for each choice of $a$.  The total space $\mathsf{S}$ of such metric expansions (when we vary $a$)  is then $\mathbb{R}^{k-1} \times \mathbb{R}_+$.  

\begin{figure}
  \centering
  \includegraphics[scale=0.4]{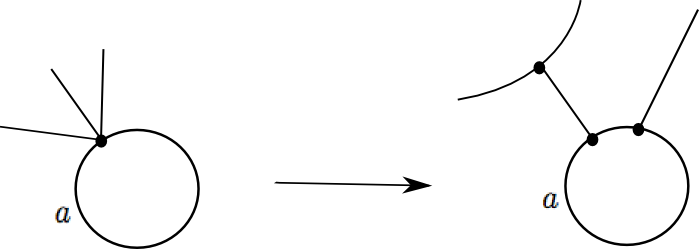}\\
  \caption{A metric expansion at a vertex of degree $5$.   }
\end{figure}

Finally, note that for any such metric graph, one can attach $k$ half-planes to obtain a $k$-planar end, and conversely, the metric spine of a $k$-planar end is a graph in $\mathsf{S}$.  These maps are clearly inverses of each other, and each is continuous: this thus  establishes a homeomorphism between  $\mathsf{S}$ and $\P(k)$. 
\end{proof}

In what follows we shall need:

\begin{defn}\label{collapse}
The \textit{collapsing map} of a planar end  $P$ is the harmonic map to the $k$-pronged tree $X_k$
 \begin{center}
 $c_P:P \to X_k$
 \end{center}
obtained by mapping each half-plane to a prong by the map $z\mapsto \Im(z)$ that collapses the horizontal  foliation to its leaf space. (Throughout the article, $\Im(z)$ and $\Re(z)$ shall denote the imaginary and real parts, respectively of a complex number $z$.)

Note that this map takes the boundary of the planar end, and $k$ arcs to the puncture, to the vertex $O$ of the tree $X_k$. 
\end{defn}

\begin{defn}[Model maps]\label{modmap}
 Let $P \in \P(k) $ be a planar end and $\theta \in [0,2\pi/k)$ be an angle, which we shall henceforth consider as lying on a circle $S^1$.  Consider the conformal (uniformizing) homeomorphism  $\phi_\theta:P\cup \{\infty\}  \to \mathbb{D}$  such that:
\begin{itemize}
\item $\phi_\theta$ takes $\infty$ to $0$, and
\item $\phi_\theta$  takes the $k$ boundary rays to arcs incident to the puncture at (asymptotic) angles $\exp({i\theta}), \exp(i\theta + \frac{i2\pi}{k}), \cdots ,\exp(i\theta + \frac{i2\pi(k-1)}{k})$.  
\end{itemize}

(Note that the first condition determines such a uniformizing map $\phi_\theta$ up to rotation, and the latter determines it uniquely.) 
Then the \textit{model map} for this local data $L = (P,\theta) \in \P(k) \times S^1$ is the harmonic map 
\begin{equation}\label{mmap}
m= c_P \circ \phi_\theta^{-1}:\mathbb{D}^\ast \to X_k
\end{equation}
where $c_P$ is the collapsing map for $P$.
The collection of such model maps with the compact-open topology shall be denoted by $\M(k)$.
\end{defn}

By definition, the assignment $L=(P, \theta) \mapsto m$ above provides a  homeomorphism $\P(k) \times S^1 \cong \M(k)$. Hence by  Proposition \ref{pk} we have the homeomorphism $\M(k) \cong \mathbb{R}^{k} \times S^1$  which is part of the statement of Theorem \ref{main}. As a remark, we can also write the right-hand side $\mathbb{R}^{k} \times S^1$ of this homeomorphism as  $\mathbb{R}^{k-1} \times \mathbb{C}^\ast$ by conflating  $\mathbb{R}_+ \times S^1$ to $\mathbb{C}^\ast$ (together they represent the ``complex scale" of the  differential at the pole).

\subsection*{Two properties.} 

\begin{enumerate}
\item First, we note that a model map is harmonic in the sense of Definition \ref{har}.  In particular, consider an annular subsurface $S$  of the planar end $P$ with a one boundary component $\partial S \cong S^1$ that links the puncture, and the other boundary component equal to $\partial P$. 
Then, the restriction of the collapsing map of $P$  to the subsurface $S$ defines a harmonic map to a (finite) subtree, say $\chi$, that takes $\partial P$ to the vertex $O$ along with some boundary map $\partial S \to \chi$. The uniqueness of harmonic maps to trees (or more generally, non-positively curved targets - see \cite{Mese}) then implies that the collapsing map in fact solves the Dirichlet problem  (and is the energy minimizer) for these induced boundary maps. See Lemma \ref{dirich} for an instance of this.\\
\item The collapsing map from $S$ to $\chi$ as in (1) has the feature that any interior point of a prong of $\chi$ has exactly two pre-images on the boundary component distinct from $\partial P$. We shall call this property \textit{``prong duplicity"}. Note that this property holds not just for a model map, but also for the collapsing map for any half-plane differential on $(\Sigma,p)$, when restricted to such an annulus $S$ around $p$ in the corresponding planar-end. \\
\end{enumerate} 
We shall exploit these properties  in  Lemma \ref{toplem} of the next section.

\subsection{Topology of the collapsing foliation.}

As we saw in the previous section, the model maps collapse along the leaves of a foliation on the punctured disk (or its restriction to an annular subsurface $S$). This foliation has finitely many prong-type singularities, with the property that all these singularities lie on a connected ``critical graph". In subsequent constructions (in Proposition \ref{first} and Lemma \ref{hconn}),  we will need to ensure that this connectedness  holds for the  critical graphs for the Hopf differentials for a sequence of harmonic maps, and for the limiting map. To this end, we shall use the following technical lemma. It shows that the connectedness is forced by the two properties -  harmonicity and the ``prong-duplicity" property  - of the maps (see the end of \S3.1). 

The underlying reason for this is as follows:  the prong-duplicity condition rules out any ``folding" of the map (which would have generated more pre-images)  -  the prong-singularities (see Definition~\ref{defn: measured foliation})  are then forced to map to the vertex as it is the only ``singular" point of the target;  finally, the Maximum Principle implies that the preimage of the vertex is connected (this implicitly uses the feature that the target is a tree).

\begin{lem}[Topological lemma]\label{toplem} Let $A$ be an annulus with boundary components $\partial^{\pm}A$, and $\chi$ be a finite $k$-pronged tree with a single vertex $O$. Let $f:A\to \chi$ be a harmonic map collapsing along the leaves of a foliation $F$ on $A$ that is smooth except for finitely many prong-singularities. Further, assume that  $f$ satisfies the following conditions:
\begin{itemize}
\item \textnormal{(Prong Duplicity)} Each interior point of a prong of $\chi$ has precisely two preimages on $\partial^{+}A$, and 
\vspace{.07in}
\item $\partial^{-}A$ maps to the vertex $O$ of $\chi$.
\end{itemize}
Then the preimage $f^{-1}(O)$ is a connected graph that contains all the singularities of $F$.
\end{lem}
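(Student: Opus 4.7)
The plan is to establish two facts: (1) every prong-singularity of $F$ lies in $K := f^{-1}(O)$, and (2) $K$ is connected.

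For (1), I suppose for contradiction that $s$ is a prong-singularity with $f(s)=q$ interior to some prong $e$ of $\chi$. Since $f$ is locally real-valued near $s$ (its target being an interval $\subset e$), harmonicity gives $f = q + \Re(z^m)$ in natural coordinates, with $n = 2m \geq 4$. For small $\delta>0$, the preimage $f^{-1}(q+\delta)$ thus consists of $m$ local arcs near $s$. Globally each such arc lies on a regular leaf which is either a boundary-to-boundary arc (with endpoints on $\partial^+ A$, since $\partial^- A$ maps to $O\ne q+\delta$) or a closed curve. Prong-duplicity caps arc-leaf endpoints at two total on $\partial^+ A$, permitting at most one arc-leaf; hence $m-1 \geq 1$ of the local arcs must lie on closed leaves. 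But the Maximum Principle, applied to convex distance functions on the tree $\chi$ pulled back via $f$, rules out both disk-bounding closed leaves (which would force $f$ constant on the disk) and two concentric closed leaves at the same level (which would force $f$ constant on the annulus between them, since $d(\cdot,q+\delta)\circ f$ is a non-negative subharmonic function vanishing on the boundary). This forces $m\le 2$; the borderline $m=2$ is then handled by tracking the unique closed leaf $\gamma_\delta$ as $\delta$ varies across an interval of regular values and using the resulting cylindrical sub-region to derive an inconsistency with the foliation structure.

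For (2), each connected component $W$ of $A\setminus K$ maps by continuity into a single prong of $\chi$; if $\partial W\subset K$, then $f|_{\partial W}=O$ and the same convexity/maximum-principle argument forces $f\equiv O$ on $W$, contradicting $W\cap K=\emptyset$. Hence every component of $A\setminus K$ meets $\partial^+ A$. Prong-duplicity implies that $K\cap \partial^+ A$ consists of exactly $k$ transition points dividing $\partial^+ A$ into $k$ open arcs, each covering one prong out-and-back, and these arcs distribute one-to-one among $k$ components of $A\setminus K$ mapping to the $k$ prongs. If $K$ had an ``island'' component $K_1$ disjoint from $\partial^- A$, it would be a tree (no cycles, by the maximum-principle argument above) attached to $\partial^+ A$ at some subset of the transition points, cutting off a disk $D$ from $A$ not containing $\partial^- A$. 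Then the image $f(D)$ lies in a proper subtree $\chi'\subsetneq \chi$ (by the convex-hull property for harmonic maps to NPC spaces), while prong-duplicity on the remaining arcs of $\partial^+ A$ still demands two preimages of each interior point of prongs in $\chi\setminus \chi'$, producing a counting inconsistency.

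The main obstacle is the borderline case $m=2$ (equivalently $n=4$) in part (1): the simple counting of level-set components and closed leaves yields equality rather than strict inequality, so one must use the continuous variation of level sets across an interval of regular values, together with the rigidity of harmonic maps to trees, to close that argument. Once part (1) is in place, part (2) follows from the standard maximum-principle and topological considerations sketched above.
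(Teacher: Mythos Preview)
Your argument for part (1) has a genuine gap in the counting step. You assert that since prong-duplicity allows at most one global arc-leaf in $f^{-1}(q+\delta)$, the remaining $m-1$ local arcs near $s$ must lie on closed leaves. But the single global arc-leaf is merely an embedded arc in $A$ with two endpoints on $\partial^+A$; nothing prevents it from passing through your small neighborhood of $s$ several times, thereby accounting for several (or all) of the $m$ local arcs. So the inequality ``$m-1$ local arcs lie on closed leaves'' is not established, and with it the contradiction for $m\ge 3$ collapses. Your handling of the borderline $m=2$ is also only a gesture: you invoke a ``cylindrical sub-region'' swept by $\gamma_\delta$ but do not explain what actually fails there, and in fact as $\delta\to 0$ the closed leaves would limit onto a loop of the singular level through $s$, whose analysis requires exactly the kind of branch-tracking you were trying to avoid.

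The paper takes the two steps in the opposite order, and this is not cosmetic. It first proves connectedness of $f^{-1}(O)$ directly: given two hypothetical components, a path between them through a region free of $f^{-1}(O)$ must map into a single prong, and the subharmonic function $d(f(\cdot),O)$ then attains an interior maximum along that path, contradicting the Maximum Principle. With connectedness of $f^{-1}(O)$ in hand, the paper analyzes the \emph{singular} level $f^{-1}(y)$ for $y$ interior to a prong: at a hypothetical branch point there are at least three branches, and each must terminate on $\partial^+A$ at a distinct point (closed loops are ruled out by the Maximum Principle, essential loops by the now-established connectedness of $f^{-1}(O)$, and coincident endpoints would create a bigon). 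Three distinct endpoints then directly contradict prong-duplicity, with no borderline case left over. If you want to salvage your approach, you should either work at the singular level as the paper does, or supply a genuine topological reason why the unique global arc-leaf cannot revisit the neighborhood of $s$; absent that, the argument does not close.
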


\begin{proof}
As we shall see, the harmonicity and the conditions above place restrictions on the possible behaviors of the level sets, which are leaves of $F$. 

First, note that as  a consequence of the fact that $f$ is harmonic, by the Maximum Principle, no leaf of $F$ can bound a simply connected region in $A$. 

The argument for the connectedness of $f^{-1}(O)$ also uses the Maximum Principle: 

Consider the level set $f^{-1}(O)$, and assume it is not connected.  Let $\Gamma_1$ and $\Gamma_2$ be two disjoint connected components of $f^{-1}(O)$, with $\Gamma_1$ containing $\partial^-A$. We may choose the ``innermost" such pair,  so that, together with arc(s) on $\partial^+ A$, the graphs $\Gamma_1$ and $\Gamma_2$ bound a region of $A$ without any preimage of the vertex in its interior.

\begin{figure}[h]
  \centering
  \includegraphics[scale=0.31]{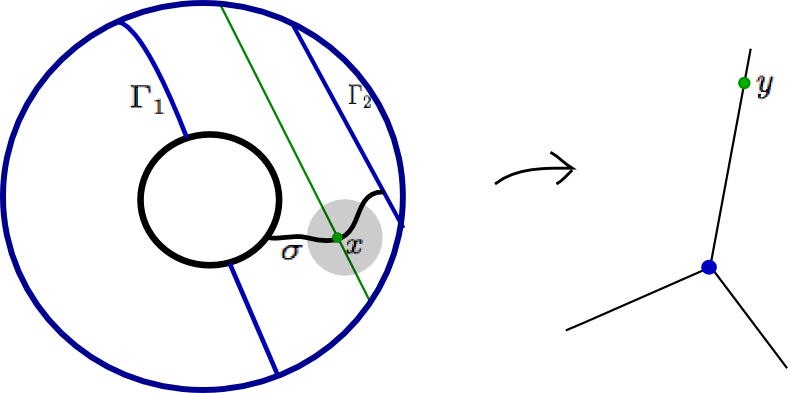}\\
  \caption{The (hypothetical) case when $f^{-1}(O)$ is disconnected. Here the inner boundary of the annulus $A$ (shown on the left)  is $\partial^+A$ and the outer boundary is $\partial^-A$.}
\end{figure}

Consider a path $\sigma$ connecting $\Gamma_1$ and $\Gamma_2$. Since the image of $\sigma$ does not cross the vertex, the loop $f(\sigma) \subset \chi$ remains on one prong. 
Let $y$ be the point on $f(\sigma)$ at maximum distance from the vertex, and let $x\in \sigma$ map to $y$. (See Figure 6.). Consider a small disk $D$ centered at $x$ contained in the interior of the region of $A$ bounded by $\Gamma_1$ and $\Gamma_2$. The restriction of the harmonic map $f$ to $U$ yields a subharmonic distance function $d(f(z), O)$ on $U$ with a maximum achieved in its interior. By the Maximum Principle, this forces $f$ to be constant on $U$, which is absurd, since its level sets are leaves of $F$. \\

Moreover, for an interior point of a prong, we can also show:\\

\textit{Claim. If $y\in X_k$ is an interior point of a prong, then the preimage $f^{-1}(y)$ is a smooth arc without singularities and has exactly two endpoints on $\partial^+A$.}\\
\textit{Proof of claim.} Otherwise, there will be a singularity, and hence some branching at some point $x^\prime \in f^{-1}(y)$. Consider two of the branches of the preimage: if they meet each other at an interior point, the resulting closed curve will either enclose a simply connected region, which is not allowed by the Maximum Principle, or separate the boundary components $\partial^\pm A$ which contradicts the fact that the preimage of the vertex $O$ is connected and intersects both. The same argument rules out the two branches reaching the same point on $\partial^+A$. Hence they reach two distinct points of $\partial^+A$.  Then, the  third branch will either
\begin{enumerate}
\item reach $\partial^+A$, which will violate prong-duplicity. (See Figure 7.) Note that it must reach a point distinct from the other two branches as otherwise the two branches from $x^\prime$ with the same endpoint on $\partial^+A$ will create a bigon, also disallowed by the Maximum Principle.  
\item close up, violating the Maximum Principle, as mentioned before,
\item reach $\partial^{-}A$, which will violate the fact that  $\partial^{-}A$ maps to $O$.
\end{enumerate}

\begin{figure}[h]
  \centering
  \includegraphics[scale=0.3]{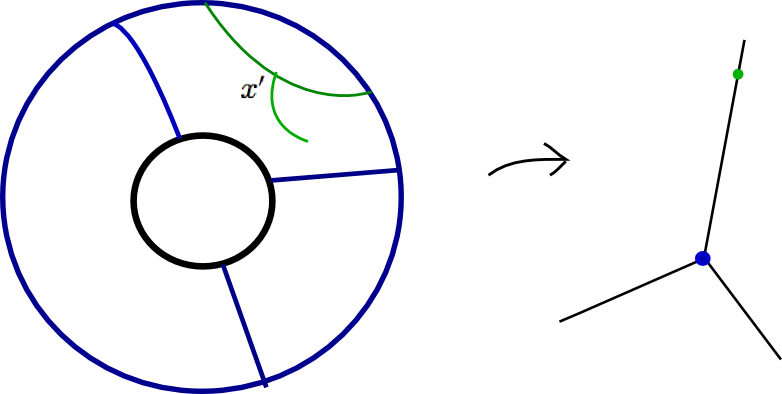}\\
  \caption{A hypothetical singularity at $x^\prime$ in $A$ (left) is mapped to an interior of a prong in $X_k$ (right).}. 
\end{figure}

Hence in each possibility we get a contradiction. $\qed$\\

The claim just proved then implies that all the singularities must lie on the preimage of the vertex $f^{-1}(O)$, which we also showed was connected.
\end{proof}

\subsection{Bounded distance $\implies$ identical}
Lastly, we  observe:

\begin{lem}\label{unbdd} A distinct pair of model maps $m_1,m_2 \in \M(k)$ are an unbounded distance from each other, namely the distance function $d:\mathbb{D}^\ast \to \mathbb{R}$ defined by $d(z) = d_{X_k}(m_1(z),m_2(z))$ is unbounded.
\end{lem}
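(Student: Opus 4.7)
My plan is to argue by contrapositive: I will show that if $d(z) := d_{X_k}(m_1(z), m_2(z))$ is a bounded function on $\mathbb{D}^\ast$, then in fact $m_1 \equiv m_2$, which contradicts distinctness in $\M(k)$. The two ingredients are the boundary behavior of model maps on $\partial \mathbb{D}$ and a maximum-principle argument adapted to the puncture at $0$.

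The first step is to observe that every model map extends continuously to $\partial \mathbb{D}$ with constant value $O$. Indeed, by Definition \ref{modmap}, each $m_i = c_{P_i} \circ \phi_{\theta_i}^{-1}$, where $\phi_{\theta_i}: P_i \cup \{\infty\} \to \mathbb{D}$ is a conformal homeomorphism of closed topological disks sending $\partial P_i$ to $\partial \mathbb{D}$ (by standard boundary regularity of Riemann mappings), and the collapsing map $c_{P_i}$ sends $\partial P_i$ to the vertex $O \in X_k$ by Definition \ref{collapse}. Hence $m_1 \equiv m_2 \equiv O$ on $\partial \mathbb{D}$, so $d \equiv 0$ there.

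The next step is to exploit the subharmonicity of $d$ on $\mathbb{D}^\ast$. Since $X_k$ is a metric tree and hence a CAT$(0)$ space, the product $(m_1, m_2): \mathbb{D}^\ast \to X_k \times X_k$ is harmonic into a CAT$(0)$ target, and the distance function on $X_k \times X_k$ is a convex function on this target. By the characterization of harmonic maps in Definition \ref{har}, pullbacks of convex functions are subharmonic, so $d$ is subharmonic on $\mathbb{D}^\ast$. Now assume, for contradiction, that $d \leq M < \infty$ on $\mathbb{D}^\ast$. For each $\epsilon > 0$, set $u_\epsilon(z) = d(z) + \epsilon \log |z|$; this is subharmonic on $\mathbb{D}^\ast$, vanishes on $\partial \mathbb{D}$, and tends to $-\infty$ as $z \to 0$ because $d$ is bounded. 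For $r > 0$ small enough that $M + \epsilon \log r < 0$, the maximum principle on the annulus $\{r < |z| < 1\}$ yields $u_\epsilon \leq 0$ there; letting $r \to 0$ gives $u_\epsilon \leq 0$ on all of $\mathbb{D}^\ast$, and then letting $\epsilon \to 0$ gives $d \leq 0$. Combined with $d \geq 0$, we conclude $d \equiv 0$, i.e., $m_1 = m_2$ pointwise on $\mathbb{D}^\ast$. Since Definition \ref{modmap} makes the assignment $(P, \theta) \mapsto m$ a bijection onto $\M(k)$, the underlying local data coincide, contradicting $m_1 \neq m_2$ in $\M(k)$.

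The main obstacle I expect is the subharmonicity of $d$ in the singular-target setting, which relies on knowing that the convexity of distance on a CAT$(0)$ target interacts correctly with the definition of harmonicity to trees given in Definition \ref{har}; a secondary technical point is the continuous extension of the uniformization $\phi_\theta$ across $\partial \mathbb{D}$, which I will invoke from standard conformal boundary regularity since $P \cup \{\infty\}$ is a topological disk with piecewise-analytic boundary. Once these two points are in hand, the logarithmic-barrier argument is routine.
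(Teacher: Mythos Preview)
Your argument is correct and follows essentially the same route as the paper: both use that $d$ is subharmonic (via convexity of the distance on the CAT$(0)$ target), that $d$ vanishes on $\partial\mathbb{D}$, and that a bounded non-negative subharmonic function on the parabolic surface $\mathbb{D}^\ast$ vanishing on the boundary must be identically zero. The only difference is cosmetic: the paper cites the parabolicity fact from Ahlfors--Sario and Tsuji, while you supply the standard logarithmic-barrier proof of that fact directly.
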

\begin{proof}
Assume that $d$ is bounded. Since the distance function 
$d_{X_k}(\cdot,\cdot): X_k \times X_k \to \R$ is convex, the pullback $d$  by the harmonic map $(m_1,m_2): \mathbb{D}^\ast \to X_k \times X_k $ is subharmonic. Moreover, since any model map takes $\partial \mathbb{D}^\ast$ to the vertex of $X_k$, we have that $d\vert_{\partial \mathbb{D}^\ast} \equiv 0$.  As $\mathbb{D}^\ast$ is parabolic (in the potential-theoretic sense), a bounded non-negative subharmonic function on $\mathbb{D}^\ast$ that vanishes on the boundary is identically zero (see IV. \S1 6C,7E and \S2 9 of \cite{AhlSar} or  Theorem X.7 and X.17 of \cite{Tsuji}). Thus $d$ vanishes identically, contradicting the fact that $m_1$ and $m_2$ are distinct.
\end{proof}

\textit{Remark.} This last lemma explains why the ``geometric-analytic" space in Theorem \ref{main} involves the asymptotic behavior of harmonic maps upto ``bounded distance". In particular, for any fixed model map, there is a \textit{unique} harmonic map on the surface asymptotic to it in this sense. This shall be used in \S5.2.

\section{Symmetric annuli and estimates for least-energy maps}

Recall that $X_k$ is a tree with a single vertex $O$ and $k$ prongs of infinite-length. In what follows,  a \textit{finite $k$-pronged subtree of $X_k$} (usually denoted by $\chi$) will mean a subset of $X_k$ with the single vertex $O$ and $k$ finite prongs obtained by truncating each prong of $X_k$.\\

 Consider the following two boundary-value problems. Here, and subsequently in the paper, we shall implicitly assume that maps considered are continuous with weak derivatives locally in $L^2$ (see Definition \ref{har}). \\

\textbf{Problems.} Find the energy-minimizing map from a conformal (round) annulus $A$ to a finite $k$-pronged sub-tree of $X_k$ with a prescribed map on the boundary component $\partial^+A$, and with either:
\begin{itemize}
\item \ a prescribed map on the boundary component $\partial^- A$ ( a \textit{Dirichlet boundary problem}); when this is equal to the constant map to the vertex $O$  we call it a \textit{Dirichlet-$O$ boundary problem}, or 
\item no requirement on the boundary component $\partial^-A$  (a \textit{ partially free boundary problem}). 
\end{itemize}
\medskip

\textit{Remark.} We note that there exists a solution to the partially free boundary problem for each annulus $A$:  consider solutions of Dirichlet problems for different boundary problems and then restrict to a sequence of those whose energy tends to an infimum. Since there is an  overall energy bound, the Courant-Lebesgue Lemma provides for the equicontinuity of that sequence.  Because the  boundary componeny $\partial^-A$  has fixed image,  Lemma \ref{alem1} applies, and shows  that  there is a convergent subsequence.\\ 
 
In this section we show that under certain symmetry assumptions, the energies of the solutions of these two problems differ by a bounded amount (Proposition~\ref{energy}). This comparison shall be crucial in  the final section of \S5 in proving a uniform energy bound. \\

\textbf{Outline of the section.} The results of this section shall be used later (\S5.4) to extract a convergent subsequence from a sequence of harmonic maps to $X_k$ defined on a  compact exhaustion of our  punctured Riemann surface $\Sigma\setminus p$.  Such a convergence is not obvious, as the images of such a sequence of maps are not uniformly bounded - they lie in the  graph $X_k$, and, as the sequence progresses, the images cover arbitrarily large portions of the infinite-length prongs.  In this section we shall focus on an exhaustion of a planar end by symmetric annuli (defined in \S\ref{sec:symmetric exhaustion}) and eventually derive uniform bounds on compact sets, for the energy and diameter of the partially free boundary solution for any such annulus in our exhaustion. The main comparison results are stated in \S\ref{sec:Dirichlet Partially Free}, where we compare the solutions to the Dirichlet and the partially free boundary value problems on the annuli (Propositions \ref{main-prop} and \ref{energy}).

The first key observation (\S\ref{sec:doubling}) for the proof is that the partially free boundary problem on an annulus reduces to a Dirichlet problem on the doubled annulus (where the doubling is across the ``free" boundary). Then in \S\ref{sec:decay}, we use a standard decay for  harmonic functions defined on a cylinder (here the decay is towards the central circle of $A$) to uniformly bound -- with a quantitative decay estimate -- the image of the ``central" curve of the cylinder.  Finally, in \S\ref{sec:Proof prop 4.7}, we apply this estimate in our setting to show that this decay compensates enough for the growth of the image of the maps resulting from the growth of the compact subsurfaces: this last balance of inequalities uses the additional symmetry we have assumed.

\subsection{Exhaustion by symmetric annuli.} \label{sec:symmetric exhaustion}
We begin with some definitions.

\begin{defn}[Rectangular annulus]\label{rann} A \textit{rectangular annulus} $A$ in a planar end $P$ is an annular subsurface with a core curve linking the puncture, whose boundary consists of alternating horizontal and vertical edges on each half-plane. A \textit{truncation} of a planar end $P$ is a rectangular annulus with one boundary component precisely equal to the  boundary of $P$.
\end{defn}

\begin{figure}
  \centering
  \includegraphics[scale=0.4]{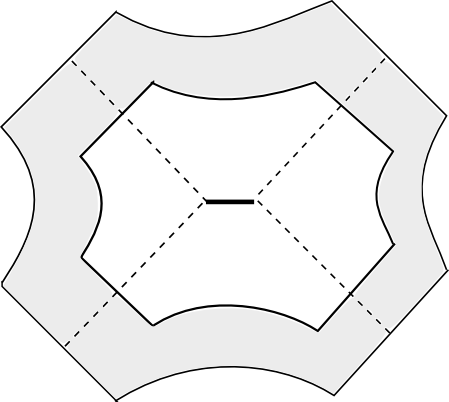}\\
  \caption{A symmetric rectangular annulus (shaded) in a $4$-planar end $P$ has at least a $2$-fold symmetry. Here the boundary of the half-planes for the planar end are shown dotted, and the two sides of a horizontal slit in the middle (in bold) form its boundary $\partial P$. }
\end{figure}

Let $A$ be a rectangular annulus in a planar end $P$. Then the restriction of the collapsing map  $c_P$ (as in Definition \ref{collapse}) to $A$ is a harmonic map to its image in  $X_k$ (see Property (1) following Definition~\ref{modmap}). In particular, if $A$ is a  truncated planar end, then $c_P$ is a harmonic map to the finite $k$-pronged subtree $\chi$ with prong lengths given by the lengths of the horizontal edges in the outer boundary. In this case, the collapsing map $c_P$ takes the boundary component $\partial^-A$  to the vertex $O$, and so solves the ``Dirichlet-$O$ boundary" problem. We summarize the discussion in the following lemma. 

\begin{lem}\label{dirich}
The collapsing map $c_P$ restricted to a truncated planar end $A$ in a planar end $P$ is the least energy map amongst those maps to $X_k$ with same values on the (outer) boundary component $\partial^+A$, and which map the (inner) boundary $\partial^-A$ to $O$. 
\end{lem}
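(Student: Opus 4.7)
The plan is to reduce this lemma to the standard uniqueness theorem for harmonic maps with prescribed Dirichlet data into NPC targets. Since $X_k$ is a tree, and in particular a CAT(0) space, the Korevaar--Schoen theory as extended to singular targets (Mese, \cite{Mese}, as invoked in Property (1) following Definition \ref{modmap}) guarantees that the Dirichlet problem for harmonic maps admits a unique solution, and that this solution coincides with the unique energy minimizer among finite-energy maps sharing the given trace on $\partial A$. Thus it suffices to verify that $c_P|_A$ itself is a harmonic map realizing the boundary data described in the statement.

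To that end, I would first check harmonicity. By Definition \ref{collapse}, the collapsing map $c_P$ is harmonic on all of $P$. Since harmonicity is a local property (Definition \ref{har}), its restriction to the open subsurface $A$ remains harmonic on the interior of $A$. The mild technical point to verify is harmonicity across the horizontal edges of $A$ where distinct half-planes have been glued: on each half-plane $c_P$ is literally given by $z \mapsto \Im z$, and across a gluing arc (which lies at $\Im z = 0$ on both sides) both sides map to the vertex $O$, so the pullback of any convex germ on $X_k$ remains subharmonic, satisfying the criterion in Definition \ref{har}.

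Next I would identify the boundary values. By Definition \ref{rann}, a truncation has one boundary component equal to $\partial P$, the collection of unglued horizontal boundary segments of the half-planes. Each such segment lies at height $\Im z = 0$, and hence maps under $c_P$ to the vertex $O \in X_k$. This identifies the boundary component $\partial^-A$ of the statement with $\partial P$ and shows $c_P|_{\partial^-A} \equiv O$, matching the prescribed Dirichlet-$O$ condition there. On the remaining boundary $\partial^+A$, the prescribed data is simply $c_P|_{\partial^+A}$, and this agreement is tautological.

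Combining harmonicity of $c_P|_A$ with the boundary identifications and invoking NPC uniqueness of energy minimizers will immediately identify $c_P|_A$ as the least-energy map in the stated competition class. I anticipate no serious obstacle; the only subtlety is the regularity verification across the gluing arcs of the half-plane structure, which is handled as above. Note also that, strictly speaking, the comparison class can be taken to be maps into a finite subtree $\chi \subset X_k$ containing the image of $c_P|_A$: an energy competitor whose image escapes $\chi$ can only increase energy, since post-composition with the nearest-point projection onto the convex subtree $\chi$ is $1$-Lipschitz and preserves the trace on $\partial A$.
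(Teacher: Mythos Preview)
Your proposal is correct and follows essentially the same approach as the paper: the paper states this lemma as a summary of the preceding discussion together with Property (1) after Definition~\ref{modmap}, which invokes exactly the NPC uniqueness theorem (citing \cite{Mese}) to conclude that the harmonic collapsing map with the given boundary values is the energy minimizer. Your write-up is somewhat more detailed (the check across gluing arcs and the remark on projecting into a finite subtree $\chi$), but the argument is the same.
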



\textit{Remark.} The energy of the collapsing map  is equal to one-half the total area of the rectangles constituting $A$.

\begin{defn}\label{symmann} A \textit{symmetric} rectangular annulus in a $k$-planar end is a rectangular annulus that 
uniformizes to a round annulus such that the restriction of the collapsing map to the boundary components have a $k$-fold rotational symmetry when $k$ is odd, and $k/2$-fold rotational symmetry when $k$ is even. (See Figure 8.) In particular, if the subtree $\chi \subset X_k$ is the image of $c_P$, then $\chi$ has that same order of symmetry.
\end{defn}

Finally, we shall  need the following notion:

\begin{defn}[Symmetric exhaustion]\label{symex} A \textit{symmetric exhaustion} of a planar end $P$ is a sequence of nested symmetric rectangular annuli whose union includes a neighborhood of the puncture at infinity. 
\end{defn}

\textit{Example.} We refer to the example in \S1 following Theorem \ref{main}. On the  punctured disk $\mathbb{C} \setminus \{0\}$ consider the meromorphic quadratic differential $z^{k-2}dz^2$  where $k\geq 2$. The induced metric is that of a $k$-planar end, and clearly has a $k$-fold symmetry (multiplying $z$ by a $k$-th root of unity leaves the differential invariant). In particular, we can choose a sequence of closed curves with alternating vertical and horizontal segments, having this symmetry, that exhaust the end and bound a sequence of symmetric annuli. It is well-known (see \cite{Streb}) that any odd higher-order pole has a coordinate chart where the differential is of this form, and hence, by the above description, has a symmetric exhaustion.

 In what follows we give an independent synthetic-geometric proof of the existence of a symmetric exhaustion, that works in general.

\begin{lem}Any planar end $P$ has a symmetric exhaustion.
\end{lem}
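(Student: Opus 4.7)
My plan is to construct, for each $n\geq 1$, a simple closed ``$\sqcap$-curve'' $\gamma_n$ in $P$ that links the puncture, so that the truncation $A_n$ bounded by $\gamma_n$ and $\partial P$ is a symmetric rectangular annulus, and so that the $A_n$ are strictly nested with $\bigcup_n A_n = P$ (which contains a neighborhood of the puncture). In each half-plane $H_i$, the intersection $\gamma_n \cap H_i$ will be a single $\sqcap$-shape with endpoints $(a_i,0), (b_i,0) \in \partial H_i$: vertical up to height $h_i$, horizontal of length $L_i = b_i - a_i$, and vertical back down. Placing these endpoints on the two infinite rays of $\partial H_i$ beyond all finite-interval glues reduces the closing-up condition $(b_i,0) \sim (a_{i+1}, 0)$ to matching a common arclength parameter $t_i = s^+_i = s^-_{i+1}$ along the glued rays; writing $D_i$ for the horizontal extent of $\partial H_i$ between the starts of its two infinite rays then gives $L_i = D_i + t_i + t_{i-1}$.

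The boundary walk on $X_k$ traced by the collapsing map on $\gamma_n$ consists of $k$ blocks, one per half-plane: on prong $i$ it ascends to height $h_i$, stays at height $h_i$ for arclength $L_i$, and descends to $O$ (the two adjacent vertical segments across $\partial H_i$ glue into a single straight line in the flat metric). For $k$-fold symmetry (required when $k$ is odd) I enforce $L_i \equiv L$ and $h_i \equiv h$, yielding the cyclic linear system $t_i + t_{i-1} = L - D_i$; its coefficient matrix $I + S$ (with $S$ the cyclic shift of order $k$) has determinant $\prod_{j=0}^{k-1}(1+e^{2\pi i j/k}) = 2$ for $k$ odd, so there is a unique solution depending affinely on $L$, and all $t_i \geq 0$ once $L$ exceeds a threshold set by the $D_i$. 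For $k$ even this matrix is singular, but relaxing to $k/2$-fold symmetry (alternating $L_A, L_B$ and $h_A, h_B$ between odd- and even-indexed blocks) absorbs the one-dimensional cokernel via the closure relation $L_B - L_A = \tfrac{1}{2}\sum_i (-1)^i D_i$; the remaining cyclic system then admits nonnegative solutions for $L_A$ large. Setting $h_n = L_n = n$ (or $L_{A,n} = h_n = n$ with $L_{B,n}$ determined) produces strictly nested $\sqcap$-rectangles and hence nested truncations $A_n$ exhausting $P$.

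To conclude I would verify that each $A_n$ is a symmetric rectangular annulus in the sense of Definition~\ref{symmann}. As a singular-flat surface, $A_n$ carries a manifest isometric action of $\mathbb{Z}/k$ or $\mathbb{Z}/(k/2)$ cyclically permuting its blocks; since the conformal uniformization of an annulus to a round annulus is unique up to rotation, this orientation-preserving isometry is realized as a rotation of the uniformizing round annulus, inducing the required rotational symmetry of the collapsing map on $\partial A_n$. The main obstacle I foresee is the positivity bookkeeping in the even case, ensuring that the closure relation together with the remaining cyclic system can be solved with all $t_i \geq 0$ for large enough $L_A$; this reduces to a finite linear-algebra check driven by the uniform growth of $L_A$ relative to the fixed constants $D_i$. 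A secondary subtlety is that $\gamma_n$ is simple and links the puncture: distinct $\sqcap$-shapes meet only at gluing points on the $\partial H_i$, and locally each $\sqcap$ separates the spine side of its half-plane from the puncture at infinity, so $\gamma_n$ is simple and nontrivial in $\pi_1(P)$.
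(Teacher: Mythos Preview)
Your construction of the curves $\gamma_n$ and the cyclic linear algebra are sound (the constant $\tfrac12$ in the even-$k$ closure relation should be $\tfrac{2}{k}$, a minor slip). The real gap is the claim in your last paragraph that the truncation $A_n$ bounded by $\gamma_n$ and $\partial P$ carries an isometric $\mathbb{Z}/k$- or $\mathbb{Z}/(k/2)$-action. While your $k$ rectangles $[a_i,b_i]\times[0,h_i]$ are congruent, they are glued to one another along their bottom edges by the interval-exchange that defines the metric spine of $P$, and that spine --- lying in the interior of $A_n$ --- is in general asymmetric. There is then no conformal automorphism of $A_n$ permuting the blocks, so after uniformization to a round annulus the $k$ blocks on $\gamma_n$ need not occupy equal arcs, and Definition~\ref{symmann} is not verified.

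The remedy is to discard the asymmetric core: take $A_n$ to be the annulus between $\gamma_1$ and $\gamma_n$ rather than between $\partial P$ and $\gamma_n$ (Definition~\ref{symex} only requires the union to cover a neighborhood of the puncture). Subtracting your two cyclic systems shows that $t_i^n - t_i^1$ is independent of $i$, so the $k$ resulting $\sqcap$-frames are congruent and are glued cyclically only along segments of the infinite rays; the cyclic isometry is now genuine and your uniformization argument goes through. The paper reaches the same conclusion by a slightly different device: it observes that the complement of an $H$-thickening of a truncated spine of $P$ is isometric to the corresponding region in a symmetric model end $P_{sym}$ (via the edge-length linear algebra of \cite{HubbMas}), and then pulls back the obvious symmetric exhaustion of $P_{sym}$. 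Your explicit $\gamma_n$ construction is a more hands-on route to essentially the same endpoint, once the inner boundary is placed beyond the spine.
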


\begin{figure}
  \centering
  \includegraphics[scale=0.4]{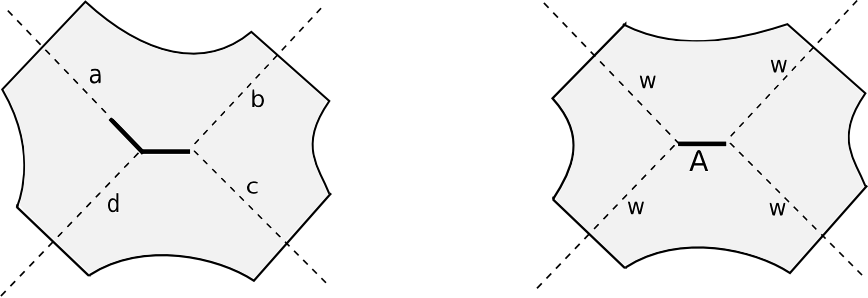}\\
  \caption{As in Figure 8, a  planar-end can be obtained by introducing a slit (shown in bold) on a conformal copy of $\mathbb{C}$ obtained by gluing half-planes. Given a  $4$-planar end (left) one can always choose a symmetric $4$-planar end (right) and truncations of each, such that their complements are isometric.}
\end{figure}

\begin{proof}
We shall work with the metric spines of the planar ends - the latter is easily recovered from the former by attaching half-planes. 
A  symmetric exhaustion is obvious for a ``symmetric $k$-planar end", that is, one with a critical spine having the required symmetry (as in the preceding example). In what follows, we reduce the general case to this.  

 A ``truncation" of a metric spine $S$ of $P$ shall mean the subgraph obtained by cutting off an end of each infinite-length ray. An \textit{$H$-thickening} of such a truncated metric spine (for some $H>0$)  is obtained by  taking the points in $P$ whose vertical distance from the truncated spine is not greater than $H$.  This is then a subsurface of $P$ comprising rectangles of height $H$ on each half-plane. 
 
 The main observation is that, given an arbitrary planar end $P$, we can choose a  symmetric $k$-planar end $P_{sym}$, and truncations of their metric spines, such that the complements of their $H$-thickenings are  \textit{isometric}.
 
To see this, we apply the proof of  Lemma 3.15 in \cite{HubbMas}: namely, by elementary linear algebra (Lemma 3.13 of \cite{HubbMas}) we choose the edge-lengths of the symmetric $k$-planar end $P_{sym}$, and distances along the rays for truncating both $P$ and $P_{sym}$, such that  for any thickening the edge-lengths of the resulting rectangles would match. (See Figure 9.) 

Pulling back a symmetric exhaustion of the latter by this isometry then produces the desired exhaustion on the planar end $P$ we started with.
\end{proof}

\subsection{Comparing solutions of Dirichlet and partially free boundary value problems.} \label{sec:Dirichlet Partially Free}

The setting for this subsection is the following. Suppose $\{A_i\}_{i\geq 1}$ is a symmetric exhaustion of a planar end $P$. Let $h_i:A_i \to X_k$ be the harmonic map that solves the partially free boundary problem with the map on the outer boundary being the restriction of the collapsing map $c_P$. Also, let $c_i:A_i \to X_k$ be the restriction of $c_P$ to the annuli; we have already seen that they solve the corresponding Dirichlet boundary problem on $A_i$ (see Lemma \ref{dirich} for the case when the exhaustion is by truncations of $P$).  Our aim in this section is to compare these two harmonic maps; in particular,
our main goal in this section \S4 is to prove

\begin{prop}\label{energy} Let $\{A_i\}_{i\geq 1}$ be a symmetric exhaustion of a planar end $P$. Let $c_i:A_i\to X_k$ be the restrictions of the collapsing map $c_P$, and $h_i:A_i \to X_k$ be the solutions of the  partially free boundary problem as above. Then their energies satisfy:
\begin{equation}\label{ebound1}
\mathcal{E}(h_i) \leq \mathcal{E}(c_i)  \leq \mathcal{E}(h_i) + K
\end{equation}
where $K$ is independent of $i$. 
\end{prop}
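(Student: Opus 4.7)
The first inequality $\mathcal{E}(h_i) \leq \mathcal{E}(c_i)$ is immediate, since $c_i$ has the same prescribed data $c_P|_{\partial^+A_i}$ on $\partial^+ A_i$ and is thus an admissible competitor for the partially-free problem solved by $h_i$.

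For the reverse inequality, the plan is to construct, starting from $h_i$, an admissible competitor for the Dirichlet problem whose energy exceeds $\mathcal{E}(h_i)$ by an amount bounded uniformly in $i$. By the hypothesized rotational symmetry of the annulus $A_i$ and of the boundary data $c_P|_{\partial^+A_i}$, combined with the uniqueness of energy-minimizers into the NPC target $X_k$ (see \cite{Mese}), the map $h_i$ must be equivariant under this symmetry. Consequently $h_i(\partial^-A_i)$ is a connected, rotation-invariant subset of $X_k$; since the induced action cyclically permutes the prongs and fixes $O$, any such connected invariant subset is a closed star $\overline{B}_{r_i}(O)$ of some radius $r_i \geq 0$. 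Similarly $c_P(\partial^-A_i) \subset \overline{B}_{R_i}(O)$ for some $R_i \geq 0$.

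The crux of the argument is to bound both radii uniformly: $r_i, R_i \leq K_0$. The bound on $R_i$ will follow from the construction of the symmetric exhaustion, in which $\partial^-A_i$ is arranged to lie in a compact region of $P$ (e.g., $\partial P$ itself, or near it). The bound on $r_i$ is more delicate and is the role of the doubling and decay machinery to be developed in \S\ref{sec:doubling} and \S\ref{sec:decay}: doubling $A_i$ across its free boundary $\partial^-A_i$ converts the partially-free problem into a Dirichlet problem on an annulus $\widetilde{A}_i$ of twice the modulus, whose central circle is precisely $\partial^-A_i$, and exponential decay of harmonic maps on long cylinders then bounds the image of this central circle. The symmetry is essential here because it forces the ``zero-mode" contribution from the symmetric boundary data on $\partial^+A_i$ to sit at the fixed point $O$; only the non-zero Fourier-like modes contribute to the image of $\partial^-A_i$, and these decay exponentially in the cylinder modulus, enough to beat the growth of the boundary data (whose image reaches ever further along the prongs as $A_i$ exhausts the end).

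Given such a uniform bound $r_i, R_i \leq K_0$, the competitor is built explicitly: choose a collar $C_i \subset A_i$ of $\partial^-A_i$ of fixed conformal modulus, parametrized as $[0,1] \times S^1$ with $\{0\} \times S^1 = \partial^-A_i$, and define $\tilde c_i := h_i$ on $A_i \setminus C_i$, extended over $C_i$ by interpolating linearly in the collar parameter $t$ along the unique tree geodesic in $X_k$ from $c_P$ at $t=0$ to $h_i$ at $t=1$. The pointwise distance between these endpoints is bounded by $r_i + R_i \leq 2K_0$, which, combined with the bounded modulus of $C_i$, gives an $i$-independent bound $K$ on the energy contributed by the modification. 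Since $\tilde c_i$ agrees with $c_i$ on $\partial A_i$, it is admissible for the Dirichlet problem and the minimizing property yields $\mathcal{E}(c_i) \leq \mathcal{E}(\tilde c_i) \leq \mathcal{E}(h_i) + K$. The main obstacle throughout is the uniform bound on $r_i$: the boundary data $c_P|_{\partial^+A_i}$ has diameter growing without bound in $X_k$ as $i \to \infty$, and only the symmetry hypothesis together with precise cylindrical decay estimates can ensure that the image at the central circle nevertheless remains uniformly bounded.
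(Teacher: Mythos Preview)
Your plan matches the paper's proof: the left inequality is immediate, and the right inequality is obtained by modifying $h_i$ on a fixed collar of $\partial^-A_i$ into a Dirichlet competitor, with the uniform cost of that modification supplied by Proposition~\ref{main-prop} (whose proof via doubling and cylindrical decay you correctly anticipate).

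One detail needs tightening. Your quantity $r_i$ bounds $h_i$ on $\partial^-A_i = \{t=0\}$, but the endpoint of your geodesic interpolation at $t=1$ is $h_i$ on the \emph{outer} edge $\{1\}\times S^1$ of the collar, so the claimed bound ``$\leq r_i + R_i$'' is not justified as written; moreover, a pointwise bound on the two endpoints alone does not control the $\partial_\theta$-contribution to the energy of the interpolation, which also involves the tangential variation of $h_i(1,\cdot)$. The repair is already present in the decay machinery you invoke: after doubling across the free boundary, the collar becomes a fixed-width central sub-cylinder of the long cylinder $\hat A_i$, and the explicit formula in Proposition~\ref{tech} (in the two-prong reduction) bounds the harmonic function --- hence $h_i$ --- together with its gradient uniformly on that entire sub-cylinder, not only on the central circle. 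The paper records this as a uniform diameter bound on $h_i(N)$; with it in hand, any reasonable interpolation over $N$ has energy bounded independently of $i$, and your argument goes through.
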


This proposition will be used in the final section of \S5 to prove an important part of the main theorem.\\

The key ingredient in the proof of Proposition~\ref{energy} is 

\begin{prop}\label{main-prop} Suppose $\{A_i\}_{i\geq 1}$ is a symmetric exhaustion of a planar end $P$. Let $h_i:A_i \to X_k$ be the harmonic map that solves the partially free boundary problem with the map on the outer boundary being the restriction of the collapsing map $c_P$. Then the distance from the  image  under $h_i$ of the common (inner) boundary component to the vertex $O$ is uniformly bounded (independent of $i$).
\end{prop}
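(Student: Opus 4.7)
I would follow the three-step strategy sketched in the section's outline: reduce the partially free boundary problem to a Dirichlet problem by doubling across the free boundary, apply a Fourier decay estimate for harmonic functions on the long doubled cylinder, and balance this decay against the growth of the boundary data using the rotational symmetry of Definition~\ref{symmann}.

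\textbf{Doubling.} Reflect $A_i$ across its free inner boundary $\partial^- A_i$ to obtain a round annulus $\tilde A_i$ of twice the conformal modulus. By a reflection argument (to be established in the next subsection), the partially free boundary solution $h_i$ extends to a harmonic map $\tilde h_i: \tilde A_i \to X_k$ solving the Dirichlet problem on $\tilde A_i$, with the prescribed data $c_P|_{\partial^+ A_i}$ appearing on each of the two boundary components of $\tilde A_i$. Under this doubling, the middle circle of $\tilde A_i$ is identified with $\partial^- A_i$, so it suffices to bound $d(\tilde h_i(\cdot),O)$ uniformly on the middle circle.

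\textbf{Decay estimate on the doubled cylinder.} Uniformize $\tilde A_i$ as a flat cylinder $[-L_i,L_i]\times S^1$ with the middle circle at $\rho=0$. The distance function $\tilde u_i(\rho,\theta) := d(\tilde h_i(\rho,\theta),O)$ is subharmonic by Lemma~\ref{dist}. Dominate it, via the maximum principle, by the harmonic function $v_i$ on the cylinder with common Dirichlet datum $f_i := d(c_P,O)|_{\partial^+ A_i}$ on both boundary components. Separation of variables gives
\[
v_i(\rho,\theta) \,=\, \hat f_i(0) \,+\, \sum_{n\neq 0}\hat f_i(n)\,\frac{\cosh(n\rho)}{\cosh(nL_i)}\,e^{in\theta}.
\]
By Definition~\ref{symmann}, $f_i$ is invariant under a $k^*$-fold rotation of $S^1$ (with $k^*\in\{k,\,k/2\}$), so $\hat f_i(n)=0$ unless $n\in k^*\mathbb{Z}$. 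Evaluating at $\rho=0$ bounds the nonconstant part of $v_i(0,\cdot)$ in sup norm by $C\,\|f_i\|_\infty\,e^{-k^*L_i}$.

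\textbf{Balance of growth and decay.} The quantities $\|f_i\|_\infty$ and $\hat f_i(0)$ are geometrically the maximum, and conformal average, of the heights of vertical edges on $\partial^+ A_i$. In a symmetric rectangular annulus, the rigid rotational structure forces $\|f_i\|_\infty$ to grow at most polynomially in the modulus $L_i$, so $\|f_i\|_\infty e^{-k^*L_i}\to 0$ and the oscillatory part of $v_i(0,\cdot)$ is uniformly bounded. The zero Fourier mode $\hat f_i(0)$ must be controlled separately via an energy comparison: since $\mathcal{E}(\tilde h_i)\leq\mathcal{E}(\tilde c_i)$ (the doubled collapsing map providing an admissible competitor), an integration-by-parts / Green's identity argument on the cylinder converts the energy bound into a uniform bound on the conformal average of $f_i$, hence on $\hat f_i(0)$. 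Combining these yields $\tilde u_i(0,\theta) \leq v_i(0,\theta)\leq K$ for a constant $K$ independent of $i$, which is the desired conclusion.

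\textbf{Main obstacle.} The principal challenge is that the $k^*$-fold symmetry does \emph{not} annihilate the Fourier zero mode $\hat f_i(0)$: the cylindrical decay argument alone controls only the oscillatory part of the harmonic extension at the middle circle. Hence a separate and rather delicate control of $\hat f_i(0)$ — matching the singular-flat growth of the heights of $\partial^+ A_i$ against the conformal modulus of the annulus — must be extracted from the rigid geometric structure of the symmetric rectangular exhaustion and the energy-minimality of $\tilde h_i$. This is the "balance of inequalities'' flagged in the outline, and it is the step in which the symmetry hypothesis is indispensable.
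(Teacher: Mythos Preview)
Your overall architecture --- double across the free boundary, then use Fourier decay on the resulting long cylinder --- matches the paper's strategy. The oscillatory estimate is essentially right (though note that $\|f_i\|_\infty$ grows \emph{exponentially} in $L_i$, not polynomially; the product $\|f_i\|_\infty e^{-k^*L_i}$ is still bounded because the exponents balance exactly).

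The real gap is the zero mode, and your proposed fix does not work. The quantity $\hat f_i(0)$ is the conformal average of $d(c_P(\cdot),O)$ over $\partial^+A_i$; it is determined entirely by the prescribed boundary data $c_P|_{\partial^+A_i}$ and has nothing to do with $h_i$ or its energy. Since the heights of the rectangles in the symmetric annulus grow without bound as $i\to\infty$, so does $\hat f_i(0)$. No energy comparison for $\tilde h_i$ can change this, because you are trying to bound a number that does not involve $\tilde h_i$ at all. Consequently your harmonic majorant $v_i$ is unbounded on the middle circle, and the inequality $\tilde u_i \le v_i$ gives no information.

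The paper avoids this by a different use of the symmetry. Rather than working with the (merely subharmonic) unsigned distance on $X_k$, one first \emph{quotients} the annulus by its rotational symmetry to get a map $\bar h:\bar A\to X_2$ to a two-pronged tree. On $X_2$ there is a \emph{signed} distance from the vertex, which is a genuine linear function; its pullback by $\bar h$ is therefore honestly harmonic, and --- crucially --- the reflection symmetry of the collapsing map on $\partial^+\bar A$ forces this signed distance to have mean zero on the boundary. That kills the zero Fourier mode for free, so the cylinder decay estimate (Proposition~\ref{tech}) applies directly with no residual term. For odd $k$ one passes to a double cover first. This signed-distance-on-$X_2$ trick is the missing idea in your argument.
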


The proof of this proposition will occupy subsections \S4.3-4.5. The $k$-fold symmetry of the symmetric annuli  plays a crucial role in \S4.4, to guarantee the uniform bound for any  ($k$-) planar end: essentially,  the argument then reduces to that of a $2$-planar end, with a general case being a finite cover.  \\

Given Proposition~\ref{main-prop}, the proof of Proposition~\ref{energy} is straightforward, so we give it now.

{\it Proof of Proposition~\ref{energy}:}

Consider a fixed  symmetric collar neighborhood $N$ of the common (inner) boundary $I$ of $A_i$. (We shall assume, for convenience, that $I$ coincides with $\partial P$.) Then Proposition \ref{main-prop} applied to the symmetric exhaustion starting from the outer boundary of $ N$ implies that the image of  this collar neighborhood by the sequence $h_i$  has a uniform diameter bound, say by $C$. 

As before, let $c_i:A_i\to X_k$ be the restriction of the collapsing map $c_P$.

We construct a candidate map $g:A_i\to \chi_i$  for the ``Dirichlet-$O$ boundary problem" by
\begin{itemize}
\item setting $g$ to equal to $h_i$ away from the fixed collar $N$.
\item interpolating the map on $N$ so that the inner boundary $I$ is mapped by $g$ to the vertex $O$.  (In case the boundary $I$ does not coincide with the boundary $\partial P$ of the planar end,  the image will be a (fixed) finite $k$-pronged subtree of $X_k$, and we instead interpolate to the corresponding map to it.)  
\end{itemize}

For such a map $g$, we see that $g$ agrees with $c_i$ on the boundary $\partial A_i$ and is a candidate for the Dirichlet boundary problem whose energy is minimized by $c_i$. Hence we may conclude that 
\begin{equation*}
\mathcal{E}(c_i) \leq \mathcal{E}(g).
\end{equation*}
On the other hand, since the diameter of the collar is bounded (with bound independent of the index $i$), the interpolation of the map $g$ over the neighborhood $N$ can be done with a bounded cost of energy, independent of $i$. Hence
\begin{equation*}
\mathcal{E}(g) \leq \mathcal{E}(h_i) + K
\end{equation*}
which gives the right-hand inequality of \eqref{ebound1} once we apply the previous inequality.

To conclude, we note that the left-hand inequality in \eqref{ebound1} is immediate: since the collapsing map $c_i$ is a candidate for the partially free boundary minimizing problem solved by $h_i$, we find 
$\mathcal{E}(h_i) \leq \mathcal{E}(c_i)$. \qed

\subsection{Doubling trick} \label{sec:doubling}

Our goal then is to prove Proposition~\ref{main-prop}. We first show:

\begin{prop}\label{doub} Let $A$ be a conformal annulus and let $\chi$ be a finite $k$-pronged subtree of $X_k$.  Fix a continuous map $\phi:\partial^+A\to \chi$ on one boundary component that takes on each value only finitely often, and consider the solution $h:A\to \chi$  to the  partially free boundary problem that requires $h$ to agree with $\phi$ on that boundary component $\partial^+A$. Then this map $h$ extends by symmetry to a solution $\hat{h}$ of the symmetric Dirichlet-problem on a doubled annulus $\hat{A} = A^+ \sqcup_{\partial_{-}A} A^-$ where one requires a candidate $\phi$ to be the map on both boundary components of $\hat{A}$. In particular, we have $h = \hat{h}\vert_{A^+}$. 
\end{prop}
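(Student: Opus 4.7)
The plan is to construct $\hat{h}$ explicitly as the symmetric extension of $h$ across $\partial^-A$ and then to identify this extension as the unique energy-minimizer for the Dirichlet problem on $\hat{A}$ via a direct comparison argument. Realize $A$ conformally as $\{1<|z|<R\}$ with $\partial^-A=\{|z|=1\}$, let $A^- = \{1/R<|z|<1\}$ be the reflected copy, and let $\sigma(z)=1/\bar z$ be the anticonformal involution of $\hat{A}=\{1/R<|z|<R\}$ fixing $\partial^-A$ pointwise. Define $\hat{h}\colon\hat{A}\to\chi$ by $\hat{h}|_{A^+}=h$ and $\hat{h}|_{A^-}=h\circ\sigma$. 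Since $\sigma$ is the identity on the seam $\partial^-A$, the two pieces agree there, so $\hat{h}$ is continuous on $\hat{A}$, and clearly $\hat{h}=\phi$ on both components of $\partial\hat{A}$.

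Next, I would show that $\hat{h}$ is an energy minimizer for the Dirichlet problem on $\hat{A}$ with boundary data $\phi$ on both components. Let $g\colon\hat{A}\to\chi$ be any candidate, and split $\mathcal{E}(g)=\mathcal{E}(g|_{A^+})+\mathcal{E}(g|_{A^-})$. The restriction $g|_{A^+}$ agrees with $\phi$ on $\partial^+A$, so it is a candidate for the partially free boundary problem on $A$ solved by $h$, giving $\mathcal{E}(g|_{A^+})\geq\mathcal{E}(h)$. Similarly, $g|_{A^-}\circ\sigma^{-1}\colon A^+\to\chi$ agrees with $\phi$ on $\partial^+A$, and since the two-dimensional Dirichlet energy is invariant under anticonformal maps, $\mathcal{E}(g|_{A^-})=\mathcal{E}(g|_{A^-}\circ\sigma^{-1},A^+)\geq\mathcal{E}(h)$. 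Summing yields $\mathcal{E}(g)\geq 2\mathcal{E}(h)=\mathcal{E}(\hat{h})$, so $\hat{h}$ is an energy-minimizer.

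Finally, I would appeal to the uniqueness of energy-minimizing maps to an NPC (tree) target with fixed Dirichlet boundary data (see \cite{KorSch1} or \cite{Mese}) to conclude that $\hat{h}$ is \emph{the} solution of the Dirichlet problem on $\hat{A}$, and by construction $h=\hat{h}|_{A^+}$. The main technical concern is justifying that $\hat{h}$ is harmonic across the seam $\partial^-A$: this follows without a separate matching-of-normal-derivatives calculation, since the energy-minimizing property (together with the continuity and $L^2$-regularity of $\hat{h}$) suffices to conclude harmonicity in the sense of Definition \ref{har}. In effect, doubling trades the absence of a Dirichlet constraint on $\partial^-A$ for a symmetry constraint on $\hat{A}$, and the argument above shows that these two variational problems determine the same map on $A^+$.
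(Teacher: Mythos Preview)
Your argument is correct and takes a genuinely different route from the paper's. The paper proceeds analytically: it first computes that the normal derivative of $h$ vanishes along the free boundary $\partial^-A$ (via the first variation), so that the reflected extension $\hat{h}$ is $C^1$ across the seam and hence harmonic by elliptic regularity; it then devotes a separate lemma to the case where $h(\partial^-A)$ meets the vertex $O$, analyzing the level set $h^{-1}(O)$ near $\partial^-A$ and invoking a removable-singularity argument for the Hopf differential. Your approach bypasses all of this by a direct variational comparison: splitting any Dirichlet competitor $g$ on $\hat{A}$ into two halves, each of which (after reflection) competes against $h$ in the partially free problem, immediately yields $\mathcal{E}(g)\ge 2\mathcal{E}(h)=\mathcal{E}(\hat{h})$, and then Korevaar--Schoen uniqueness for NPC targets finishes. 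This is cleaner and, notably, never uses the hypothesis that $\phi$ takes each value only finitely often --- that assumption enters the paper's proof only to control the topology of $h^{-1}(O)$ near $\partial^-A$, which your argument does not need. What the paper's approach buys in exchange is more explicit local information (the Neumann condition \eqref{normal} and the structure of $h^{-1}(O)\cap\partial^-A$), though that information is not used elsewhere in the paper. One small point worth making explicit in your write-up: the glued map $\hat{h}$ lies in the admissible class for the Dirichlet problem because $h\in W^{1,2}(A^+)$ and $h\circ\sigma\in W^{1,2}(A^-)$ have matching traces on the seam, so $\hat{h}\in W^{1,2}(\hat{A})$ with $\mathcal{E}(\hat{h})=2\mathcal{E}(h)$; you allude to this but it is the one place where a reader might want a sentence more.
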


Note that the solution to the  partially free boundary value problem exists (see the beginning of \S4). 

\begin{proof}[Warmup to the proof of Proposition \ref{doub}] We begin by assuming that the solution $h$ to the partially free boundary problem described above has image $h(\partial^-A)$ of the boundary  component $\partial^-A$ disjoint from the vertex of $\chi$.

By this assumption, near the boundary $\partial^-A$, we have that $h$ is a harmonic map to a smooth (i.e. non-singular) target isometric to a segment. 

First, we show that for the solution of the partially free-boundary problem,  the normal derivative at the (free)boundary component $\partial^-A$ vanishes. We include the elementary computation below for the sake of completeness. \\

Consider a family of maps $u_t: A \to \R$ defined for $t \in (-\epsilon, \epsilon)$.  A map $u_0=h$ in this family is critical for energy if
\begin{align*}
0 &=\frac{d}{dt}\Bigr|_{t=0} E(u_t) \\
&= \frac{d}{dt}\Bigr|_{t=0} \int_M |\nabla u_t|^2 dvol_A \\
&= 2\int_M \nabla \dot{u} \cdot \nabla u_0 dvol_A \\
&= -2\int_M \dot{u} \Delta u_0  + \int_{\partial M} \dot{u} \frac{\partial}{\partial \nu} u_0 dvol_A
\end{align*}

Thus, since $\dot{u} = \frac{d}{dt}\Bigr|_{t=0} u_t$ is arbitrary, we see that 
necessary conditions for a solution $u_0$ to the partially free boundary value problem are that 
\begin{align*}
\Delta u_0 &=0 
\end{align*}
\begin{align}\label{normal}
\frac{\partial}{\partial \nu} u_0 &=0.
\end{align}

Thus, the level curves for our map $h=u_0$ to the graph meet the boundary arc orthogonally.

We then show that the partially free boundary solution $h$ is ``half" of a Dirichlet problem on a doubled annulus.  We follow an approach developed by A. Huang in his Rice University thesis \cite{Huang}:
Let $\hat{A}$ be the annulus obtained by doubling the annulus $A$ across its boundary component $\partial^-A$.  That is, if we denote, as usual, the boundary components $\partial A = \partial^+ A \sqcup \partial^- A$, then we set  $\hat{A}$ to be the identification space of two copies of $A$, where the two copies of $\partial^- A$ are identified.  We write this symbolically as $\hat{A}= A \sqcup_{\partial^- A} \bar{A}$, where $\bar{A}$ refers to $A$ equipped with its opposite orientation.

Let $\hat{h}: \hat{A} \to X_{k}$ denote the map defined on $\hat{A}$ that restricts to $h$ on the inclusion $A \subset \hat{A}$ and, in the natural reflected coordinates, on the inclusion $\bar{A} \subset \hat{A}$. By the continuity of $h$ on $A$ and its closure, it is immediate that $\hat{h}$ is continuous on $\hat{A}$.  The vanishing of the normal derivative at the boundary (\ref{normal})  implies that the gradient $\nabla h|_{\partial^- A}$ is parallel to $\partial^- A$.  As that gradient is continuous on $A$ up to the boundary (see e.g. \cite{Evans}, Theorem~6.3.6), we see that $\hat{h}$ has a continuously defined gradient on the interior of the doubled annulus $\hat{A}$. 

Next, note that because $\hat{h}$ is $C^1$ on $\hat{A}$, we have that $\hat{h}_i$ is weakly harmonic on $\hat{A}$.  In particular, we can invoke classical regularity theory to conclude that $\hat{h}$ is then smooth and harmonic on $\hat{A}$.  Thus, since $X_k$ is an NPC space, the map $\hat{h}$ is the unique solution to the Dirichlet harmonic mapping problem of taking $\hat{A}$ to $X_{k}$ with boundary values $h|_{\partial^+ A}$.\qed\\

Next, we adapt this argument to the general case when the image of the boundary $h(\partial^-A)$ might possibly contain the vertex $O$ of the tree $\chi$.  \\

To accomplish the extension to the singular target case, we first analyze the behavior of the level set $h^{-1}(O)$ of the vertex $O$ within the annulus
$A$, particularly with respect to its interaction with the `free' boundary $\partial^- A$. 

\begin{lem}
Under the hypotheses above, any connected component of the level set $h^{-1}(O)$ of the vertex $O$ within the annulus
$A$ meets the free boundary $\partial^- A$ in at most a single point.\end{lem}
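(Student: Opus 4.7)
The strategy is to argue by contradiction: if some component $C$ of $h^{-1}(O)$ met $\partial^-A$ in two distinct points, I will force $h\equiv O$ on an open subset of $A$, and then use the holomorphicity of the Hopf differential $\Phi=4\langle h_z,h_z\rangle dz^2$ to propagate this to all of $A$. Since then $\phi=h|_{\partial^+A}$ would be the constant map $O$, this contradicts the hypothesis that $\phi$ takes each value only finitely often.

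First I would set up the geometry. Let $p_1\neq p_2$ lie in $C\cap \partial^-A$. Since $h^{-1}(O)$ has the structure of a graph (away from any boundary arcs where $h\equiv O$) and in particular is locally path-connected, $C$ is path-connected, so I pick an embedded arc $\gamma\subset C$ from $p_1$ to $p_2$; passing to a minimal sub-arc if necessary I may assume the interior of $\gamma$ lies in the interior of $A$. Then $\gamma$ together with one of the two arcs $I\subset \partial^-A\setminus\{p_1,p_2\}$ bounds an embedded open topological disk $D\subset A$.

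The main step is to show that $h\equiv O$ on $D$ by applying a maximum-principle argument to each connected component $W$ of $D\setminus h^{-1}(O)$. By continuity and connectedness $W$ maps into a single open prong of $X_k$, which I identify with $(0,\infty)$ so that $u:=h|_W$ becomes a positive real-valued harmonic function. The boundary $\partial W$ is contained in $h^{-1}(O)\cup I$ (using $\gamma\subset h^{-1}(O)$); on the part lying in $h^{-1}(O)$ the function $u$ extends continuously by $0$, and on the part lying in $I$ the partially-free-boundary condition gives $\partial_\nu u=0$. If $u$ were not identically zero, then by the strong maximum principle its supremum would be attained at some boundary point $q$, necessarily in $I\setminus h^{-1}(O)$ (since at points of $h^{-1}(O)$ one has $u=0$); but then the Hopf boundary-point lemma at $q$ would force $\partial_\nu u(q)>0$, contradicting the free-boundary condition. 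Hence $W=\emptyset$ for every such component, and $D\subset h^{-1}(O)$.

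To finish, $h\equiv O$ on the open set $D$ gives $h_z\equiv 0$ and therefore $\Phi\equiv 0$ on $D$; since $\Phi$ is holomorphic on $A$, the identity theorem forces $\Phi\equiv 0$ on all of $A$. Then on $A\setminus h^{-1}(O)$ the map $h$ is locally real-valued and harmonic with $h_z\equiv 0$, hence locally constant, and continuity together with the fact that the boundary of each such component lies in $h^{-1}(O)$ gives $h\equiv O$ throughout $A$, yielding the desired contradiction. The main obstacle is the very first step: if $C$ were contained entirely in an arc $J\subset \partial^-A$, no interior arc $\gamma$ would be available. In that degenerate case I would argue locally near an interior point of $J$ using that both $h|_J\equiv O$ and $\partial_\nu h|_J=0$ hold there: the even (free-boundary) and odd (Dirichlet) harmonic reflections of $h$ across $J$ must then agree on a half-disk, which forces $h\equiv O$ on that half-disk, after which the Hopf-differential propagation argument finishes as before.
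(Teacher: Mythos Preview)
Your argument is correct in its main thrust but takes a genuinely different route from the paper. The paper exploits the energy-minimizing property of $h$ directly: once an arc $\Gamma\subset h^{-1}(O)$ with both endpoints on $\partial^-A$ cuts off a disk $D$, one simply redefines $h$ to be the constant $O$ on $D$; the resulting map is still admissible for the partially free boundary problem and has strictly smaller energy (unless $h$ was already $\equiv O$ on $D$), contradicting minimality. Your approach instead uses the derived Neumann condition and the Hopf boundary-point lemma to force $h\equiv O$ on $D$, and then the holomorphicity of the Hopf differential to propagate this to all of $A$. The paper's route is shorter and uses only the variational characterization; yours is more PDE-theoretic and has the incidental advantage of actually closing the case where $h$ might already be $\equiv O$ on $D$ (which the paper's energy-lowering step does not by itself exclude, although it then falls into the ``segment on $\partial^-A$'' case handled later).

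Two small points deserve tightening. First, your reduction to the ``interior arc'' case is slightly incomplete: it is not only when $C\subset\partial^-A$ that no interior arc $\gamma$ is available, but also whenever every path in $C$ between the two boundary points runs along a nontrivial segment of $\partial^-A$; in either situation you should invoke your degenerate argument on that segment. Second, in the degenerate case your phrasing ``the even and odd reflections must agree'' is not quite the right mechanism; what you really want is that zero Cauchy data ($h=0$ and $\partial_\nu h=0$) on an arc forces $h\equiv 0$ by unique continuation (e.g.\ $(h)_z$ is holomorphic and vanishes on a real arc). Moreover, applying the Neumann condition at points where $h$ hits the vertex $O$ requires first knowing that a small half-disk neighborhood maps into a single closed prong; the paper makes this explicit, and you should too.
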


\begin{proof}
We begin by noting that the proof of the Courant-Lebesgue lemma, based on an energy estimate for $h$ on an annulus (see, for example, Lemma 3.2 in \cite{Wolf2}) extends to hold for half-annuli, centered at boundary points of $\partial^- A$. Applying that argument yields  a uniform estimate on the modulus of continuity of the map $h$ on the closure of $A$ only in terms of the total energy of $h$. Thus there is a well-defined continuous extension of the map $h$ to $\partial^- A$. We now study this extension, which we continue to denote by $h$.

First note that there cannot exist an arc $\Gamma \subset A \cap h^{-1}(O)$ in the level set for $O$ in $A$ for which $\Gamma$ meets $\partial^-A$ in both endpoints of $\partial \Gamma$.  If not, then since $A$ is an annulus, 
some component of $A \setminus \Gamma$ is bounded by arcs from $\partial^-A$ and $\Gamma$.  But as $\partial^-A$ is a free boundary, we could then redefine $h$ to map only to the vertex $O$ on that component, lowering the energy.  This then contradicts the assumption that $h$ is an energy minimizer.

Focusing further on the possibilities for the level set $h^{-1}(O)$, we note that by the assumption on the boundary values of $h$ on $\partial^+A$  being achieved only a finitely many times,  the level set $h^{-1}(O)$  can meet $\partial^+A$ in only a finite number of points (in fact the number of them  is also fixed and equal to $k$ in subsequent applications, since the boundary map would be a restriction of the collapsing map for a $k$-planar end).

Therefore, with these restrictions on the topology of $h^{-1}(O)$ in $A$ in hand, we see that by the argument in the previous paragraph, each component of $h^{-1}(O)$ then must either be completely within, or have a segment contained in $\partial^-A$,  or  - the only conclusion we wish to permit - connect  finitely many points of $\partial^-A$ with one or more of the finite number of preimages of the vertex on $\partial^+A$.

Consider the first case where a component of $h^{-1}(O)$ is completely contained within $\partial^-A$.  A neighborhood $N$ of a point in such a component then has image $h(N)$ entirely within a single prong, so the harmonic map on that neighborhood agrees with a classical (non-constant) harmonic function to a segment. Thus in a neighborhood of the segment, say on a coordinate neighborhood $\{\Im (z) = y \ge 0\}$, the requirements from equation~\eqref{normal} and that $h(0) = O$ and non-constant require the function $h$ to (i) be expressible locally as $\Im (az^k) + O(|z|^{k+1})$ for some $k \ge 1$ and some constant $a \in \C$, (ii) be real analytic, and (iii) satisfy $\frac{\partial h}{\partial y} = 0$ (where $z = x+iy$).  It is elementary to see that these conditions preclude this segment $h^{-1}(O)$ from being more than a singleton: that  $h^{-1}(O)$ contains a segment defined by $\{y=0, x\in (-\epsilon, \epsilon)\}$ implies that the constant $a$ in condition (i) is real. But then $0=\frac{\partial h}{\partial y} = \Re (az^{k-1}) +O(|z|^{k})$ also on that segment $\{y=0, x\in (-\epsilon, \epsilon)\}$: thus $a = 0$, and so the map $h$ must be constant, contrary to hypothesis.

The same argument rules out  the case when the level set $h^{-1}(O)$ meets the free boundary $\partial^-A$ in a segment, and that segment is connected by an arc of $h^{-1}(O)$ to $\partial^+A$.  But for this situation, we apply the argument of the previous paragraph to a subsegment of $h^{-1}(O)$ on $\partial^-A$ with a neighborhood whose image meets only an open prong, concluding as above that such a segment on $\partial^-A$ is not possible. 

Thus the intersection of such a component of the level set $h^{-1}(O)$ with the free boundary $\partial^-A$ is only a singleton, as needed.
\end{proof}

\emph{Conclusion of the proof of Proposition~\ref{doub}:} It remains to consider the case when the image of the boundary $\partial^-A$ by $h$ contains the vertex $O$.  It is straightforward to adapt, as follows, the argument we gave in the warmup for the smooth case to the singular setting.

 Consider a neighborhood of a point on $h^{-1}(O) \cap \partial^-A$.  Doubling the map on that half-disk across the boundary $\partial^-A$ yields a harmonic map from the punctured disk to the tree (defined everywhere except at the isolated point $h^{-1}(O) \cap \partial^-A$).  That harmonic map is smooth on the punctured disk and of finite energy, and hence has a Hopf differential of bounded $L^1$-norm.  The puncture is then a removable singularity for that holomorphic differential, and hence for the harmonic map. 

The extended map $\hat{h}$ is then harmonic on the doubled annulus, and is the (unique) solution to the corresponding Dirichlet problem, as required. 
Note that  the normal derivative of the map may have a vanishing gradient at the boundary (prior to doubling), this results in a zero of the Hopf differential on the central circle of the doubled annulus.\end{proof}

By the uniqueness of the solution of the  Dirichlet problem on the doubled annulus, we obtain the following immediate corollary of Proposition \ref{doub}:

\begin{cor}\label{uniq} The solution $h:A\to \chi$ of the partially free boundary problem as in  Proposition \ref{doub} is unique.
\end{cor}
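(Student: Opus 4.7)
The plan is to deduce uniqueness directly from Proposition~\ref{doub} combined with the well-known uniqueness of Dirichlet-type harmonic mapping problems into NPC targets.

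First, suppose $h_1, h_2: A \to \chi$ are two solutions of the partially free boundary problem with the same prescribed boundary values $\phi$ on $\partial^+A$.  By Proposition~\ref{doub}, each $h_i$ extends by reflection across the free boundary $\partial^-A$ to a map $\hat{h}_i: \hat{A} \to \chi$ on the doubled annulus $\hat{A} = A^+ \sqcup_{\partial^-A} A^-$, and each $\hat{h}_i$ solves the Dirichlet problem on $\hat{A}$ with boundary data given by $\phi$ on each of the two boundary components of $\hat{A}$ (using the natural symmetric identification of both boundaries with $\partial^+A$).

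Second, I would invoke the uniqueness of solutions to the Dirichlet problem for harmonic maps into NPC spaces (Korevaar--Schoen, cf.\ \cite{KorSch1}): since $\chi \subset X_k$ is a finite tree, hence NPC, and since the doubled annulus $\hat{A}$ is a smooth Riemannian domain with prescribed continuous boundary data, there is a unique energy-minimizing (hence harmonic) map $\hat{A} \to \chi$ with those boundary values.  Therefore $\hat{h}_1 = \hat{h}_2$ on $\hat{A}$.

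Restricting back to the original annulus via $A \cong A^+ \subset \hat{A}$ yields $h_1 = \hat{h}_1\vert_{A^+} = \hat{h}_2\vert_{A^+} = h_2$, proving uniqueness.  The only subtle point is ensuring that both $h_1$ and $h_2$ satisfy the hypotheses of Proposition~\ref{doub} so that the reflection procedure applies in both cases; this is immediate from the assumption that they are solutions of the partially free boundary problem with the same boundary data $\phi$, which is assumed to take each value only finitely often.  Hence the main content of the argument is entirely contained in Proposition~\ref{doub} and the standard NPC Dirichlet uniqueness theorem, with no further obstacle.
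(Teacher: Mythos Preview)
Your argument is correct and follows essentially the same approach as the paper: the paper simply states that the corollary is immediate from Proposition~\ref{doub} together with the uniqueness of the Dirichlet solution on the doubled annulus, which is exactly the reasoning you spell out in detail.
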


\subsection{A decay estimate}\label{sec:decay}

We now use Proposition~\ref{doub} to gain uniform control on the image under the harmonic map $h$ of the free boundary $\partial^{-}A$ (which is the central circle in the doubled annulus $\hat{A}$).

In what follows we will denote by $\Cc(L)$ a cylinder of circumference $1$ and height $L$, parametrized by cylindrical coordinates $(x,\theta)$ where $x \in [0,L]$ and $\theta \in [0,2\pi)$. The \textit{central circle} is the set  $\{(L/2, \theta) \vert \theta \in [0,2\pi) \}$. 

\begin{prop}\label{tech} Let $L>1$ and $h:\Cc(L)\to \mathbb{R}$ be a harmonic function with identical maps $f:S^1 \to \mathbb{R}$ on either boundary that satisfy:
\begin{itemize}
\item the maximum value of $\lvert f \rvert $ is $M$, and
\item the average value of $f$ on each boundary circle is $0$.
\end{itemize}
Then the maximum value of the restriction of $h$ to the central circle is bounded by $O(Me^{-L/2})$, i.e there is a universal constant $K_0$ so that $\lvert h(L/2,\theta) \rvert  \leq   K_0 M e^{-L/2}$, independent of the boundary values $f$ of $h$.
\end{prop}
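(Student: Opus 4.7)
The plan is to use separation of variables: expand $h$ in a Fourier series in the angular direction and reduce the problem to a family of two-point boundary value ODEs, one per Fourier mode.

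In the coordinates $(x,\theta) \in [0,L] \times [0,2\pi)$ the Laplace equation reads $h_{xx} + h_{\theta\theta} = 0$, and the ansatz $h(x,\theta) = \sum_{n \in \mathbb{Z}} \hat{h}_n(x) e^{in\theta}$ decouples it into $\hat{h}_n''(x) = n^2 \hat{h}_n(x)$ with boundary data $\hat{h}_n(0) = \hat{h}_n(L) = \hat{f}_n$. The zero mode is killed at the outset: the general solution for $n=0$ is affine, agreement at the two endpoints forces it to be constant, and the vanishing-average hypothesis forces that constant to be $0$. For $n \neq 0$, the symmetry of the boundary data under $x \mapsto L - x$ makes the solution a hyperbolic cosine centered at the midpoint,
\[
\hat{h}_n(x) = \hat{f}_n\,\frac{\cosh\bigl(n(x - L/2)\bigr)}{\cosh(nL/2)},
\]
so that at the central circle $\hat{h}_n(L/2) = \hat{f}_n/\cosh(nL/2)$.

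The proof then reduces to estimating the series $h(L/2,\theta) = \sum_{n \neq 0} \hat{f}_n\, e^{in\theta}/\cosh(nL/2)$. The hypothesis $|f| \leq M$ gives $|\hat{f}_n| \leq M$ for every $n$, while $1/\cosh(nL/2) \leq 2 e^{-|n|L/2}$. Applying the triangle inequality and summing the resulting geometric series,
\[
|h(L/2,\theta)| \;\leq\; 4M \sum_{n \geq 1} e^{-nL/2} \;\leq\; \frac{4M\, e^{-L/2}}{1 - e^{-1/2}}
\]
for $L > 1$, which establishes the claimed bound with $K_0 = 4/(1 - e^{-1/2})$.

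There is no substantial obstacle, as this is a quantitative version of the classical exponential decay of harmonic functions far from the boundary of a long cylinder. The only point requiring care is the combined role of the three hypotheses: the sup-norm bound controls each Fourier coefficient, the equal boundary values produce the symmetric $\cosh$ profile (so that the decay at the midpoint is the favorable $e^{-|n|L/2}$ rather than a quantity governed by the full length $L$ of the cylinder), and the vanishing-average condition kills the $n=0$ mode, which would otherwise contribute an $O(M)$ constant with no decay and ruin the estimate.
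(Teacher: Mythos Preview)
Your proof is correct and follows essentially the same approach as the paper: Fourier-expand in $\theta$, solve the resulting two-point ODE for each mode, and exploit the exponential decay $\sim e^{-|n|L/2}$ at the midpoint. Your $\cosh$ form of the solution is algebraically identical to the paper's $\sinh$ expression, and your final estimate is slightly more direct---you bound $|\hat{f}_n|\le M$ via the sup norm and sum a geometric series, whereas the paper uses the $\ell^2$ bound on the Fourier coefficients together with Cauchy--Schwarz.
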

\begin{proof}
Consider the case when $f(\theta) = Me^{in\theta}$ where $n\geq 1$ and $M$ is a real coefficient. 


We compute that the Laplace equation $\triangle h = 0$ has solution
\begin{equation*}
h(x,\theta)  = \left( \frac{\sinh nx + \sinh n(L-x) }{\sinh nL} \right) Me^{in\theta} 
\end{equation*}
where we have used the boundary conditions $h(0,\cdot) = h(L, \cdot) = f$. 

Thus, at $x=L/2$ we then obtain
\begin{equation}\label{term}
\lvert h(L/2,\theta) \rvert  \leq K \cdot Me^{-\lvert n \rvert L/2}
\end{equation}
for some (universal) constant $K$.

In general, we have the Fourier expansion
\begin{equation*}
f(\theta) = \sum\limits_{n\neq 0} M_ne^{in\theta}
\end{equation*}
where note that there is no constant term because the mean of the boundary map $f$ vanishes.
The coefficients of $f$ satisfy
\begin{equation}\label{fbd}
\sum\limits_{n\neq 0} \lvert M_n\rvert^2 = \lVert f \rVert_2 \leq M^2
\end{equation}
and the general solution is:
\begin{equation*}
h(x,\theta)  =  \sum\limits_{n\neq 0} \left( \frac{\sinh nx + \sinh n(L-x) }{\sinh nL} \right) M_ne^{in\theta} 
\end{equation*}

From (\ref{term}) we find:

\begin{equation}\label{bd1}
\lvert h(L/2,\theta) \rvert  \leq   \sum\limits_{n\neq 0} K\cdot M_ne^{-\lvert n\rvert L/2}
\end{equation}

\medskip 
Note that the geometric series
\begin{equation}\label{geom}
  \sum\limits_{n=1}^\infty e^{-nL} =  \left( \frac{e^{-L}}{1 - e^{-L}} \right)  \leq  (K^\prime)^2e^{-L}
\end{equation}
for the constant $K^\prime = (1-e^{-1})^{-1/2} \approx 1.26$ (once we assume that $L>1$).

By the Cauchy-Schwarz inequality on (\ref{bd1}) and using (\ref{fbd}) and (\ref{geom}) , we then get:
\begin{equation*}
\lvert h(L/2,\theta) \rvert  \leq   K\cdot M \cdot K^\prime e^{-L/2}
\end{equation*}
which is the required bound.
\end{proof}

\subsection{Completing the proof}\label{sec:Proof prop 4.7}

To finish the proof of Proposition \ref{main-prop}, we apply the results of \S4.3 and \S4.4 to the case of harmonic maps from annuli in a symmetric exhaustion of a $k$-planar end to the corresponding finite $k$-pronged subtrees of $X_k$.\\

In what follows, let  $A$ be a symmetric rectangular annulus in a symmetric exhaustion  $\{A_i\}_{i\geq 1}$ in a $k$-planar end $P$. (Recall that these were defined in \S4.1.)  The restriction of the collapsing map for $P$ to the ``outer" boundary $\partial^+A$  forms the boundary condition for the partially free boundary problem. \\

Consider first the case when the number $k$ of prongs is even. In this case, by the $k/2$-fold symmetry of the domain (see Definition \ref{symmann}), the solution $h$  to the partially free boundary problem is the $k/2$-cover of the solution $\bar{h_2}$  of the corresponding problem of a quotient annulus $\bar{A}$  to a $2$-pronged tree $X_2$. This is because, by the uniqueness of the solution to the partially free boundary problem (Corollary \ref{uniq}) on $A$, the solution acquires the same symmetries of the problem,    and then descends to the quotient annulus $\bar{A}$. \\

On  $X_2$, one can define a signed distance function from the vertex, which is linear and hence pulls back, via the harmonic map $\bar{h_2}:\bar{A} \to X_2$ to a harmonic function $d(z) = \pm d_{X_k}(\bar{h_2}(z), O)$ on the annulus ${A}$. 
Then, as we shall quantify below, by Proposition \ref{tech} of the previous section,  this function will have an absolute-value bound on the  boundary component $\partial^-A$. This bound is then acquired by the (usual) distance function of the lifted map on the $k/2$-fold cover.
  \\

To determine the bound $M$ of the distance function for $d$ on the boundary $\partial^-A$, we examine the collapsing map of $\bar{A}$ to $X_2$ on its boundary.  This $M$ is  given by the largest ``height" of the rectangles in the symmetric annulus $A$ (which descend to the two rectangles constituting $\bar{A}$). By its symmetry and well-known estimates, the modulus of $\bar{A}$ is  $\ln {M}$ (up to a bounded additive error), and hence the modulus of $A$ is $\frac{2}{k} \ln M$ (up to a bounded additive error). (See Figure 7.) \\

\begin{figure}
  \centering
  \includegraphics[scale=0.5]{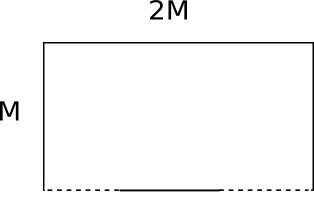}\\
  \caption{For large $M$ the extremal length of the family of arcs between the arcs shown in bold is $\sim 2\ln M + D$ for some real (universal) constant $D$. A truncated $k$-planar end is a $k$-fold branched cover of this.}
\end{figure}

Hence the cylinder $\hat{A}$ in the previous section that we obtained by doubling across the ``free" boundary of $\bar{A}$ has height $L = 2 \ln M  + D$ for some real (universal) constant $D$,  and Proposition \ref{tech} then gives the following  bound on the subharmonic distance function restricted to the middle circle:
\begin{equation*}
\lvert  d (L/2, \cdot ) \rvert \leq K_0 Me^{-L/2} = K_0 Me^{-2\ln M - D}  = K_0e^{-D}M^{-1}
\end{equation*}
which is uniformly bounded for any modulus $M$ large enough (and in fact tends to zero as $i\to \infty$ in the symmetric exhaustion by $\{A_i\}_{i\geq 1}$).\\

Hence the map $\bar{h_2}:\bar{A}\to X_2$ takes the ``free" boundary to a uniformly bounded subset of $X_2$, as $A$ ranges over all the symmetric annuli in the exhaustion. As noted above, the same bound holds for the original map $h:A\to X_{k}$ which is a $k/2$-fold cover of the map $\bar{h_2}:\bar{A}\to X_2$.\\

When $k$ is odd, one needs a small additional step:  the partially free boundary solution $h:A\to X_k$ first  lifts to a double cover $\hat{h}: \hat{A}\to X_{2k}$. The tree $X_{2k}$ now has an even number of prongs and the above argument gives a uniform bound on the image in $X_{2k}$ of the  boundary  component $\partial^-A$ by $\hat{h}$, that is also acquired by the quotient map $h$.\\

This completes the proof of Proposition \ref{main-prop}.

\section{Proof of Theorem \ref{main}}\label{sec:main theorem proof}

In this section, we prove the main theorem.  Recall we need to establish homeomorphisms between the following spaces:

\begin{itemize}

\item \textnormal{(Complex-analytic)} The space of half-plane differentials  $\mathcal{HP}_k(\Sigma,p)$.
\item \textnormal{(Synthetic-geometric)} The space of singular-flat half-plane structures  $\mathcal{P}_k(\Sigma,p)$.
\item \textnormal{(Geometric-analytic)} The space $\mathcal{H}_k(\Sigma,p)$ of harmonic maps asymptotic to \textit{some} model map in $\M(k)$.
\end{itemize}

Here the space of model maps $\M(k)$ (see Definition \ref{modmap}) are defined relative to a uniformization of a neighborhood $U\cong \mathbb{D}$  of the puncture $p$. By ``asymptotic" we mean the distance function between the maps is bounded on $U$. Throughout, the choice of $U$ shall be fixed, and implicit in our discussion.   \\

The spaces above can be given the obvious topologies:  the ``complex-analytic" space $\mathcal{HP}_k(\Sigma,p)$ acquires the topology induced as a subset of the corresponding complex vector space of meromorphic quadratic differentials on $\Sigma$; the ``synthetic-geometric" space $\mathcal{P}_k(\Sigma,p)$ can be given a topology that measures how close the singular-flat metrics are on compact subsets of $\Sigma \setminus p$ (see also Definition \ref{defn: P(k)}); the ``geometric-analytic" space of maps $\mathcal{H}_k(\Sigma,p)$ can be given the compact-open topology.

The maps between these spaces, that we shall subsequently discuss, would all be continuous in these topologies; hence we shall henceforth concern ourselves with showing they define \textit{bijective} correspondences.\\

We shall dispense with the easier correspondence in \S5.1.
To complete the proof of Theorem \ref{main}, we will then be left with showing:

\begin{prop} \label{prop: halfplane to model} The following bijective correspondences hold:
	\vspace{.07in}

\begin{itemize}
	\item \textnormal{(Synthetic-geometric $\rightarrow$ Geometric-analytic)}  The collapsing map for a half-plane structure in  $\mathcal{P}_k(\Sigma,p)$ yields a harmonic map in  $\mathcal{H}_k(\Sigma,p)$.
	\vspace{.07in}
	\item \textnormal{(Geometric-analytic $\rightarrow$ Complex-analytic)} For each model map $m \in \M(k)$, there exists a unique harmonic map $h \in \mathcal{H}_k(\Sigma,p)$ that is asymptotic to $m$, whose Hopf-differential is in  $\mathcal{HP}_k(\Sigma,p)$.
	\end{itemize}
	
\end{prop}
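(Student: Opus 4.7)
For the first claim (S-g to G-a), I would verify directly that the collapsing map of a half-plane structure is harmonic. On each constituent Euclidean half-plane the map is $z \mapsto \Im(z)$, which is harmonic; one checks using the NPC characterization in Definition \ref{har} that harmonicity extends across the critical graph (pullbacks of convex functions on $X_k$ are subharmonic, a standard calculation on an edge of the spine). Near $p$, the half-plane structure presents as a $k$-planar end $P \in \P(k)$. The uniformization $U \cong \D$ combined with the angular normalization in Definition \ref{modmap} fixes $\phi_\theta$ up to an angle $\theta \in S^1$ determined by the asymptotic directions in $U$ of the boundary rays of $P$. The restriction of the collapsing map to $U \setminus p$ then coincides with $m = c_P \circ \phi_\theta^{-1}$ on a smaller planar sub-end and differs from it at most on a compact annulus, giving the required bounded-distance asymptotic.

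For the second claim (G-a to C-a), fix $m \in \M(k)$ with local data $(P,\theta)$. I would use $\phi_\theta$ to identify a sub-end of $P$ with a neighborhood of $p$ inside $U$, take a symmetric exhaustion $\{A_i\}$ of that sub-end (\S\ref{sec:symmetric exhaustion}), and form an increasing compact exhaustion $\Sigma_i \subset \Sigma \setminus p$ whose boundary $\partial \Sigma_i$ corresponds to the inner boundary of $A_i$. On each $\Sigma_i$, solve the Dirichlet problem for harmonic maps $h_i : \Sigma_i \to X_k$ with boundary values $m\big|_{\partial \Sigma_i}$. The plan is to extract a subsequential limit on compact subsets of $\Sigma \setminus p$ via Lemma \ref{alem1}. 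The image bound required there follows from applying Proposition \ref{main-prop} to the sub-annuli $A_i \setminus K$ (after $\phi_\theta$-identification) and then invoking Lemma \ref{dist}: the subharmonicity of $d_{X_k}(h_i, O)$ combined with a bounded-image inner boundary forces $h_i(K)$ into a fixed ball around $O$ for each fixed compact $K$. The local energy bound is the real heart of the argument: Proposition \ref{energy} bounds the energy of $h_i$ on the annular tail $\Sigma_i \setminus K$ from below (up to an additive constant) by the collapsing-map energy there, which forces the energy of $h_i$ on $K$ to stay within a bounded amount of the fixed collapsing-map energy on $K$.

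Once a subsequential harmonic limit $h: \Sigma \setminus p \to X_k$ is produced, asymptoticity to $m$ on $U \setminus p$ follows from the exhausting boundary conditions. The Hopf differential $q$ of $h$ is then holomorphic on $\Sigma \setminus p$ (Definition \ref{hopfd}); by asymptotic agreement with $m$ on $U \setminus p$, whose own Hopf differential (pullback of $dz^2$-data on each Euclidean half-plane) has a pole of order $k+2$ at $p$, the same holds for $q$. That the critical graph of $q$ is connected --- the feature required for $q \in \mathcal{HP}_k(\Sigma,p)$ --- is the structural point for which I would invoke the Topological Lemma \ref{toplem}: on a sufficiently large annular neighborhood of $p$, $h$ inherits the prong-duplicity property from $m$ (preserved under the compact-uniform convergence of the approximants) and maps the outer boundary to $O$, forcing $h^{-1}(O)$ to be a connected graph containing all prong-singularities of the horizontal foliation. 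Uniqueness follows from Lemma \ref{unbdd} and parabolicity: if $h, h'$ are both asymptotic to $m$ then $d(h, h')$ is bounded (on the compact core and, by the triangle inequality, on $U \setminus p$) and subharmonic by the NPC property of $X_k$, hence constant on the parabolic surface $\Sigma \setminus p$; a matching argument at the prong-singularities of $h^{-1}(O)$ forces that constant to be zero. The principal obstacle is the uniform local energy bound: the global energy of $h_i$ diverges as $i \to \infty$ since $m$ has infinite energy, and separating the convergent ``compact'' part from the divergent ``tail'' requires precisely the quantitative comparison of Proposition \ref{energy} on symmetric annuli.
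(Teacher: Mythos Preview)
Your existence argument for the second bullet tracks the paper's \S\ref{sec:model to HP} closely and is essentially correct: symmetric exhaustion, Dirichlet approximates on $\Sigma_n$, and the energy comparison of Proposition~\ref{energy} to produce local energy bounds and a subsequential limit.

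There is, however, a genuine gap in your treatment of the first bullet. You take $P$ to be the planar end arising from the half-plane structure on $\Sigma$ itself and assert that $h|_U$ ``coincides with $m = c_P \circ \phi_\theta^{-1}$ on a smaller planar sub-end.'' This conflates two different uniformizations to $\D$: the fixed chart $\iota_U: U \to \D$ and the uniformization $\phi_\theta$ of $P$. Their transition near $p$ has the form $z \mapsto az + O(z^2)$, and when $|a| \neq 1$ --- the generic situation, since $\partial U$ has no reason to coincide with $\partial P$ --- a direct check in the model case (where $c_P \circ \phi_\theta^{-1}(z) \sim \Im(z^{-k/2})$) shows that $d_{X_k}(h \circ \iota_U^{-1}(z), m(z))$ blows up as $z\to 0$. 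The paper does \emph{not} read the model map off the planar end of the half-plane structure on $\Sigma$; instead (Proposition~\ref{first}) it constructs a \emph{new} model map adapted to $U$ as a limit of Dirichlet solutions $m_n$ on annuli exhausting $U \setminus p$, with $\partial U$ sent to the vertex $O$ and the inner boundaries matching $h$, invoking Lemma~\ref{toplem} to keep the approximates' critical graphs connected. Two further points: your application of the Topological Lemma to the \emph{limit} $h$ on $\Sigma \setminus p$ presupposes an ``outer boundary mapping to $O$'' that does not exist --- the paper instead carries connectedness of the approximates' critical graphs through the limit via an Euler-characteristic count (Lemma~\ref{final}, building on Lemma~\ref{hconn}); and for uniqueness the paper (\S\ref{sec: model uniqueness}) replaces your ``matching at prong-singularities'' with the cleaner topological fact that in genus $g \geq 1$ the two spines $h_1^{-1}(O)$ and $h_2^{-1}(O)$ must intersect, forcing the constant distance to vanish.
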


 In \S\ref{sec:HP to M}, we show that the first part of Proposition \ref{prop: halfplane to model}, in \S\ref{sec: model uniqueness} we show the uniqueness statement of the second part, while in \S\ref{sec:model to HP}, we complete the proof of the existence statement of the second part, namely, that of a harmonic map asymptotic to a given model map. This is where we use the energy estimates and the \textit{a priori} bounds established in \S4. 
  
\subsection{Complex-analytic $\leftrightarrow$ Synthetic-geometric.} \label{sec:main-prelims}

Since this part of Theorem \ref{main} is already well-known, our discussion will be brief.

Recall from \S2 that a half-plane differential $q \in \mathcal{HP}_k(\Sigma,p)$ defines a flat singular metric $|q|$ on the surface $\Sigma \setminus p$ that restricts to that of a Euclidean half-plane on each complementary component of the metric spine, hence defining an element of $\mathcal{P}_k(\Sigma,p)$. 

Conversely, given a half-plane structure $\mathsf{S} \in \mathcal{P}_k(\Sigma,p)$,  the quadratic differential $d\zeta^2$ in the natural $\zeta$-coordinate on each Euclidean half-plane $\{\Im\zeta >0 \}$  defines a global holomorphic quadratic differential on the punctured Riemann surface $\Sigma \setminus p$  obtained by an an interval-exchange map on their boundaries. (Note that the maps in an interval exchange are by semi-translations $z\mapsto \pm z+c$.) The image of the boundary-lines  of the half-planes after the identifications forms the critical graph of this resulting differential, which is connected and forms  a metric spine (see the discussion following Definition \ref{critg}). Hence we obtain a half-plane differential  $q \in \mathcal{HP}_k(\Sigma,p)$. 

Clearly, if two such half-plane structures in $\mathcal{P}_k(\Sigma,p)$ are isometric, then the isometry is also a biholomorphism between the underlying Riemann surfaces that also identifies the corresponding  half-plane differentials. 

It is straightforward to check this bijection is also continuous: the singular-flat metrics depend continuously on the half-plane differential.\\

Thus, what remains for the proof of Theorem~\ref{main} is the verification of Proposition~\ref{prop: halfplane to model}.  That proof occupies the next three sections. 

\subsection{Synthetic-geometric $\rightarrow$ Geometric-analytic} \label{sec:HP to M}

Recall that for a half-plane differential, and its corresponding half-plane structure, the collapsing map of the horizontal foliation to its leaf-space defines a harmonic map  $h:\Sigma\setminus p \to X_k$ (see  Definition \ref{collmap}). In this subsection, we show that there is a unique model map asymptotic to that collapsing map, that is:

\begin{prop}\label{first}
The restriction of the harmonic map $h$ to $U$ is bounded distance from a unique model map $m\in \M(k)$ on $U$. Moreover, as $q$ above (and consequently $h$)  varies continuously, so does this model map $m$.
\end{prop}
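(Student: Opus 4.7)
The plan is to first extract from $q$ a canonical element $(P,\theta)\in\P(k)\times S^1$, then use the half-plane structure of $q$ to verify that the associated model map $m=c_P\circ\phi_\theta^{-1}$ is at bounded distance from $h|_U$, and finally deduce uniqueness from Lemma \ref{unbdd} and continuity from the way the construction depends on $q$.

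For the construction of $(P,\theta)$: the half-plane structure induced by $q$ gives a decomposition of $\Sigma\setminus p$ into $k$ Euclidean half-planes whose boundaries are glued along the critical graph. The $k$ infinite rays of the critical graph going to $p$ cyclically pair the $k$ half-planes in a neighborhood of $p$, and this pairing is the same combinatorial data that defines the gluing of an abstract $k$-planar end. We first shrink $U$ so that its interior misses any vertex of the compact part of the critical graph of $q$. On each of the $k$ half-plane regions restricted to $U$, the natural coordinate $\zeta=\int\sqrt{q}\,dz$ realizes an isometric embedding into a Euclidean upper half-plane, covering a neighborhood of infinity. Gluing $k$ abstract Euclidean half-planes $\{\Im\zeta>0\}$ along these boundary strips and then closing up the gluing by inserting a finite compact spine in the ``core'' (any such choice allowed by Proposition \ref{pk}) produces an abstract $k$-planar end $P\in\P(k)$. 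The angle $\theta\in S^1$ is the asymptotic angle of the critical rays leaving $p$ in the coordinate $z$ on $U\cong\D$; equivalently, it is determined by the phase of the leading Laurent coefficient of $q$ at $p$, via the asymptotic relation $\zeta\sim cz^{-k/2}$.

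To verify bounded distance, note that by construction a neighborhood of infinity in $P$ is isometric, via the uniformization $\phi_\theta^{-1}$, to a neighborhood of $p$ in $(U\setminus p,|q|)$, the two structures differing only inside a bounded region that contains the compact spine of $P$ and any compact part of the critical graph of $q$ that enters $U$. On the non-compact complement of this region in $U$, both $h$ and $m$ are given by $\zeta\mapsto\Im(\zeta)$ in the natural coordinate, so they agree; on the bounded complementary region, both maps send $z$ into a bounded subset of $X_k$ (since both take the boundary of that region into a bounded set and each is bounded on its interior by the maximum principle applied to the subharmonic distance function of Lemma \ref{dist}). Hence $z\mapsto d_{X_k}(h(z),m(z))$ is bounded on $U\setminus p$.

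Uniqueness follows immediately from Lemma \ref{unbdd}: any second model map $m'\in\M(k)$ at bounded distance from $h|_U$ would be at bounded distance from $m$ and therefore equal to $m$. Continuity follows because as $q$ varies continuously in $\mathcal{HP}_k(\Sigma,p)$, the half-plane structure varies continuously on compact subsets of $\Sigma\setminus p$, and the Laurent coefficients of $q$ in the fixed coordinate chart $U$ vary continuously; these coefficients determine the combinatorial data, edge lengths, and asymptotic angle that parametrize $(P,\theta)$, so $m\in\M(k)$ varies continuously. The main technical obstacle is in the bounded-distance step: one must control the discrepancy introduced by the arbitrary ``closing-up'' of the compact spine of $P$. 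This control rests on the fact that the two maps agree identically outside a compact region of $U$, so that the error is trapped in a bounded piece of the domain whose image under either map is a bounded subset of the tree $X_k$.
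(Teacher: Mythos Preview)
Your argument has a genuine gap in the bounded-distance step. You construct $P$ by isometrically embedding a punctured neighborhood $U'\subset U$ of $p$ into a neighborhood of $\infty$ in $P$, via a map $\iota:U'\hookrightarrow P$. You then assert that ``via the uniformization $\phi_\theta^{-1}$'' this neighborhood of $\infty$ in $P$ is identified with the neighborhood of $p$ in $U$, and conclude that $h$ and $m$ agree outside a compact set. But $\phi_\theta^{-1}:\D^\ast\to P$ and $\iota:U'\to P$ are two \emph{different} conformal embeddings of punctured neighborhoods of $0$ into $P$; there is no reason for them to coincide. Concretely, $m(z)=c_P(\phi_\theta^{-1}(z))$ while $h(z)=c_P(\iota(z))$ on $U'$, and the difference $d_{X_k}(m(z),h(z))$ is governed by the discrepancy between $\phi_\theta^{-1}$ and $\iota$. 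The uniformization $\phi_\theta$ depends \emph{globally} on $P$, in particular on your arbitrary ``closing-up'' of the compact spine; different closings give genuinely different model maps, and by Lemma~\ref{unbdd} at most one of them can be at bounded distance from $h$. So the step ``any such choice'' cannot be right, and the later continuity claim (that the Laurent coefficients determine $(P,\theta)$) is inconsistent with having made an arbitrary choice.

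The paper avoids this difficulty by not trying to guess $P$ at all. It exhausts $U\setminus p$ by annuli $A_n=\D\setminus B(0,1/n)$ and, on each, solves the Dirichlet problem for maps to $X_k$ that agree with $h$ on the inner boundary circle and send $\partial\D$ to the vertex $O$. The maximum principle gives a uniform bound $d(m_n(z),h(z))\le B$ where $B=\max_{\partial\D}d(h,O)$, so a subsequence converges to a harmonic $m:\D^\ast\to X_k$ still at bounded distance from $h$. The Topological Lemma (Lemma~\ref{toplem}) is then used to show each $m_n$, and hence the limit $m$, has connected critical graph and is therefore a genuine model map. Uniqueness and continuity follow as you indicated.
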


This proposition provides that the association in the second part of Proposition~\ref{prop: halfplane to model} is well-defined, namely,  there is a well-defined map
\begin{equation}\label{psiu}
\Psi_U : \mathcal{HP}_k(\Sigma,p) \to \M(k)
\end{equation}
that assigns $\Psi_U(h) =m$ in the notation of the above Proposition.
(Here recall that in Theorem \ref{main} we fix a coordinate chart $U\cong \mathbb{D}$ around $p$.) \\

In what follows $q\in \mathcal{HP}_k(\Sigma,p)$ shall be the half-plane differential corresponding to the half-plane structure (we have already established in the previous section that they are in bijective correspondence).\\

Note that for Proposition \ref{first}  we are assuming that $h$ is already a collapsing map of some half-plane differential (and hence of some planar end $P$). The difficulty is that the (arbitrary) choice of the disk $U$ means that its boundary may not coincide with that of the planar end $P$. In particular, the harmonic map $h$ may not take $\partial U$ to the vertex $O$  of $X_k$, as a model map should. The assertion of Proposition~\ref{first} is that nevertheless, the map $h$ is \textit{bounded} distance from such a model map. \\

The strategy of the proof is the following: exhaust the punctured disk $U\setminus p$ with a sequence of annuli $A_1 \subset A_2 \subset \cdots A_n \subset \cdots$, for which one boundary component $\partial^-A$ agrees with $\partial U$. Then, for each such annulus, 
solve a Dirichlet problem to get a harmonic map $m_n:A_n \to X_k$ that maps  $\partial^-A = \partial U$ to $O$, and which restricts to $h$ on the (other) truncated boundary $\partial^+A$. We then show that this sequence of harmonic maps $m_n$ will converge uniformly on compact sets to the required model map $m: U \setminus p \to X_k$.\\

In the above construction,  we need to ensure that the critical graphs for the Hopf differentials for each $m_n$, and the limiting map $m$,  remain connected. Of course, this application is what we had in mind in stating the Topological Lemma~\ref{toplem}. (We recall that this lemma exploits the  ``prong-duplicity" property - see \S3.1- of the collapsing map $h$.)

\begin{proof}[Proof of Proposition \ref{first}]  Recall that $h:\Sigma\setminus p \to X_k$ is the collapsing map for a half-plane differential on $\Sigma \setminus p$, and $U\cong \mathbb{D}$ is a fixed disk centered at the puncture.\\
We shall construct a model map $m:U\to X_k$ as a limit of maps $m_n$ defined on compact annuli exhausting $U\setminus p$.
As usual we identify $U\cong \mathbb{D}$. Consider the annulus $A_n = \mathbb{D} \setminus B(0,1/n)$.  We denote the boundary component $\partial \mathbb{D}$ as $\partial^-A_n$ and the other boundary circle  $\partial B(0, 1/n)$ as $\partial^+A_n$. 

 Let $m_n:A_n \to X_k$ be the energy-minimizing map amongst continuous maps with weak $L^2$-derivatives  that: 
\begin{itemize}
\item restrict to $h$ on $\partial^+A_n$,  and
\item map $\partial \mathbb{D} = \partial^-A_n$ to the vertex $O$.
\end{itemize}

Note that there is such an energy-minimizing map since any minimizing sequence will have a uniform energy bound, and  the image of $\partial^-A_n$ is always the vertex,  so Lemma \ref{alem1} applies. 
Since  any reparametrization of the domain annulus preserves the above properties, the limiting map is stationary (\textit{i.e.} energy minimizing for all reparametrizations of the domain), and by an argument of Schoen  (Lemma 1 of \cite{Schoen}), the Hopf differential is then holomorphic. 
Since $h$ is a collapsing map of a half-plane differential, its restriction to $\partial^+A_n$ satisfies the prong-duplicity condition.  By the Topological Lemma \ref{toplem}, these imply that $m_n$ is a  collapsing map for a foliation whose  singularities which all lie on a connected graph mapping to the vertex $O$ of $X_k$. That is, the Hopf differential of $m_n$ has a connected critical graph.

Next, let the maximum distance of $h(\partial^-A_n)$ from $O$ be denoted by $B\geq 0$. The distance function $d_n:A_n \to \mathbb{R}_{\geq 0}$ defined by
 \begin{center}
  $d_n(z) = d(m_n(z),h(z))$ 
  \end{center}
  is then subharmonic, uniformly bounded by  $B$ on $\partial \mathbb{D} = \partial^-A_n$ and (by construction) equal to zero on $\partial^+A_n$. Hence by the Maximum Principle, the distance function $d_n$ is bounded (by $B$) on $A_n$. Hence we have that all  distance functions are uniformly bounded on any compact set in $\mathbb{D}^\ast$.
  
  In fact, the sequence of harmonic maps $m_n$ are boundedly close to the fixed harmonic map $h$.  Thus, since the map $m_n$ takes the boundary $\partial \mathbb{D}$ to the vertex of $X_k$ for each $n$, then for any compact subset $K \subset \mathbb{D}^\ast$ -- noting that $h(K)$ is fixed independently of $n$ -- we see that there is a uniform bound  on the diameter of its image $m_n(K)$ under $m_n$ (note that $K$ belongs to the domain of $m_n$ for all large $n$).  Hence there is a convergent subsequence  $m_n \to m$ (see Lemma \ref{alem1} in \S2.5) where $m:\mathbb{D}^\ast \to X_k$ is harmonic and, indeed, still at bounded distance from $h$.\\

We need to show that $m$ is a model map in $\M(k)$. We first show:\\

\textit{Claim.  The critical graph of the Hopf differential of $m$ is connected, and is the preimage of the vertex of $X_k$.}\\
\textit{Proof of claim.} We have noted above that the critical graph for each model map $m_n$ is connected. The uniform convergence $m_n\to m$ implies that the Hopf differentials of $m_n$ converge uniformly on compact sets, and so do the horizontal  foliations of those Hopf differentials of $m_n$ (convergence here is in the Hausdorff topology). Let $F$ be the horizontal  foliation for the Hopf differential of $m$. 

The above convergence implies, in particular, that the singularities of the Hopf differentials of $m_n$ converge to singularities of $F$. These account for all the singularities on the punctured disk:  each approximate has precisely $k$ preimages of $O$ on the boundary component $\partial^+A_n$ (that shrink to the puncture), and in the limit the Hopf differential has a pole of order $(k+2)$ at the puncture, so by considering the Euler characteristic,  sum of the orders of the limiting prong-singularities (\textit{i.e.} the total order of zeros) is the same for $m$ as it was for the approximates $m_n$.

 Since for each $n$, the singularities of $m_n$ are mapped to the vertex $O$ of $X_k$, hence so are their limits, which are {\it all} of the singularities of $F$ by the argument above. 

The same argument as in the proof of Lemma \ref{toplem} then completes the proof of the claim:  if there are two components of the critical graph (each mapping to $O$)  then one gets a maximum point of the subharmonic distance function from $O$ in the interior of a region of $A$ bounded by them - a contradiction.$\qed$\\

Thus, the preimage of any interior point of a prong  by $m$ contains no singularities, and is then a bi-infinite leaf with ends in the puncture. 
The complement of the critical graph of the Hopf differential of $m$ is then necessarily a collection half-planes (swept out by the bi-infinite leaves).  Moreover the number of such half-planes is precisely $k$ since the model map $m$ is at most a bounded distance away from the harmonic map $h$ which is itself a collapsing map for some $q\in \mathcal{HP}_k(\Sigma,p)$.  Finally, the map $m$ takes $\partial\mathbb{D}$ to the vertex (since each $m_n$ does). Hence $m \in \M(k)$. \\

The uniqueness follows from Lemma \ref{unbdd} -   two such maps $m_1,m_2$, both a bounded distance from $h$ would be a bounded distance from each other, and hence are identical.  The statement about continuity follows since the singular flat metrics vary continuously when the quadratic differential is varied, and so do their collapsing maps.
 \end{proof}
 
 \textit{Remark.}  This now shows that $\Psi_U: \mathcal{HP}_k(\Sigma,p) \to \M(k)$ defined by $\Psi_U(q) = m$  - see (\ref{psiu}) - is well-defined and continuous.  To prove it is a homeomorphism (and complete the proof of Theorem \ref{main}) it shall suffice to prove that it is a bijection, which the subsequent sections shall accomplish.

\subsection{Uniqueness of the half-plane differential} \label{sec: model uniqueness}
We next prove that the association $\Psi_U$  of half-plane differential to  the model map is injective, namely the uniqueness in the second part of  Proposition~\ref{prop: halfplane to model}. The argument is straightforward:

Suppose there are two half-plane differentials  $q_1,q_2 \in \mathcal{HP}_k(\Sigma,p)$ with harmonic collapsing maps $h_1,h_2:\Sigma\setminus p \to X_k$ that are both asymptotic to the same model map $m$. Then when restricted to $U\cong \mathbb{D}$,  by the triangle inequality the subharmonic distance function $ d(z) = d_{X_k}(h_1(z),h_2(z))$ between the maps is bounded.

 Since the punctured Riemann surface is parabolic in the potential-theoretic sense, such a bounded subharmonic function must be constant, that is $d\equiv c$ for some non-negative real number $c$. (Compare the argument in Lemma~\ref{unbdd}.) 
 
By definition, the preimage of the vertex $O\in X_k$ by $h_1$ and $h_2$ are spines of the punctured surface $\Sigma \setminus p$. In particular, they must intersect (\textit{e.g.} a pair of curves of algebraic intersection number one cannot have disjoint representatives) and hence the distance $d \equiv 0$ on $\Sigma$. Note that this uses the assumption that $\Sigma$ has genus $g\geq 1$.

Hence $h_1 \equiv h_2$, as claimed.  \qed

\subsection{Geometric-analytic $\rightarrow$ Complex-analytic} \label{sec:model to HP} 
In this section we prove the surjectivity of the map $\Psi_U: \mathcal{HP}_k(\Sigma,p)\to \M(k)$  in (\ref{psiu}), namely the  surjectivity  of the second part of Proposition~\ref{prop: halfplane to model}. We shall use here the results of \S4.

Our goal here is to show that for each model map $m \in \M(k)$, there exists a harmonic map $h:\Sigma\setminus p \to X_k$ of bounded distance to $m$, whose Hopf differential $q$ is half-plane, that is, $q \in \mathcal{HP}_k(\Sigma,p)$.  Note that the previous section \S5.3 shows that such a harmonic map $h$ is unique. This would conclude the proof of Proposition~\ref{prop: halfplane to model}.\\

To this end, let  $(\Sigma,p)$, $k\geq 2$, and $U \cong \mathbb{D}$  be as in Theorem \ref{main}. Fix a planar end and an angle $(P,\theta)$ and let $m:U \setminus p  \to X_k$ be a model map in $\M(k)$ determined by this data, as in Definition~\ref{modmap}. 

By Lemma \ref{symex}, we may choose a symmetric exhaustion of $P$
  \begin{center}
   $A_1 \subset A_2 \subset \cdots \subset A_n \subset \cdots$
   \end{center}
   such that the symmetric rectangular annuli are all contained in $U$.

    Let $\Sigma_0 = (\Sigma \setminus U) \cup V$, where $V$ is the interstitial region between $\partial U$ and the inner boundary of $A_1$ (see Figure 11). 
    For $n\geq 1$ define the sequence of Riemann surfaces with boundary:
   \begin{center}
   $\Sigma_n = \Sigma_0 \cup A_n$.
   \end{center}

\medskip

\begin{defn}[$\mathcal{H}_n$] \label{colhn}
For each $n\geq 0$ let $\mathcal{H}_n$  be the set of continuous maps  $h: \Sigma_n\to X_k$  with weak derivatives in $L^2$ such that
\begin{itemize}
\item[(a)] the restriction of any $h$ to $\partial \Sigma_n \subset U$ is the same as the restriction of the  model map ${m}:U\to X_k$, and
\item[(b)] each $h$ is a collapsing map for a  foliation  that is smooth except for finitely many singularities,  all of which lie along a connected spine for $\Sigma_n$ that is mapped to $O \in X_k$.
\end{itemize}
Define ${h_n}:{\Sigma_n}  \to {X_k}$ be an energy-minimizing map amongst all maps in $\mathcal{H}_n$. 
\end{defn}

Note that an energy-minimizing map exists since by Lemma \ref{alem1} any energy-minimizing sequence in $\mathcal{H}_n$ has a convergent subsequence (the images of such a sequence contains the common point $O$).
Such a map has a holomorphic Hopf differential as it is an energy minimizer for all reparametrizations of the domain. We also note:

\begin{lem}\label{hconn} The Hopf differential of $h_n$ has a connected critical graph.
\end{lem}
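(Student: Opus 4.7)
The plan is to argue by contradiction using the Maximum Principle, in the same spirit as the Topological Lemma \ref{toplem}. Let $\phi_n$ denote the (holomorphic) Hopf differential of $h_n$. Since $O$ is the unique branch point of $X_k$, every zero of $\phi_n$ must map to $O$ under $h_n$, and the critical leaves of $\phi_n$ emanating from those zeros carry the constant value $O$ throughout; consequently the critical graph of $\phi_n$ is contained in the level set $h_n^{-1}(O)$, and it suffices to prove that $h_n^{-1}(O)$ is connected as a subset of $\Sigma_n$.

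Suppose for contradiction that $h_n^{-1}(O)$ has two disjoint connected components $\Gamma_1$ and $\Gamma_2$. I would first choose an interior path $\sigma \subset \Sigma_n$ from a point of $\Gamma_1$ to a point of $\Gamma_2$, then shorten $\sigma$ (cutting at the last meeting with $\Gamma_1$ and the first subsequent meeting with $h_n^{-1}(O)\setminus\Gamma_1$, and relabelling $\Gamma_2$ if needed) so that the open interior of $\sigma$ is disjoint from $h_n^{-1}(O)$. Because $X_k\setminus\{O\}$ is the disjoint union of the $k$ open prongs and $h_n(\mathrm{int}(\sigma))$ is connected, the arc $h_n\circ\sigma$ starts and ends at $O$ and otherwise lies in a single open prong $P^*$. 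Let $t_0\in(0,1)$ realize the (strictly positive) maximum of $d_{X_k}(h_n(\sigma(t)),O)$, and set $x = \sigma(t_0)\in \mathrm{int}(\Sigma_n)\setminus h_n^{-1}(O)$. Pick a small disk $D\ni x$ with $\overline{D}\subset \Sigma_n\setminus h_n^{-1}(O)$ and $h_n(D)\subset P^*$. On $D$ the real-valued function $z\mapsto d_{X_k}(h_n(z),O)$ is the pullback by the harmonic map $h_n|_D$ of the linear distance-to-$O$ function on the smooth target $P^*$, hence is harmonic; it attains an interior maximum at $x$, so by the strong Maximum Principle it is constant on $D$. Since the distance-to-$O$ function is injective on $P^*$, the map $h_n$ is itself constant on $D$.

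Constancy of $h_n$ on the open set $D$ forces $(h_n)_z\equiv 0$ on $D$, so the holomorphic differential $\phi_n = 4\langle (h_n)_z,(h_n)_z\rangle dz^2$ vanishes on $D$ and, by the identity principle, on all of the connected surface $\Sigma_n$. But then $(h_n)_z\equiv 0$ wherever $h_n$ is smooth, and since $h_n$ is locally real-valued on each prong this makes $h_n$ locally constant on $\Sigma_n\setminus h_n^{-1}(O)$, hence constant along each boundary arc of $\partial \Sigma_n\setminus h_n^{-1}(O)$. This contradicts the prescribed boundary condition $h_n|_{\partial\Sigma_n} = m|_{\partial\Sigma_n}$, which by construction of the model map sweeps non-trivially through every prong of $X_k$. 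The main obstacle compared with Lemma \ref{toplem} is that $\Sigma_n$ is a surface with boundary rather than an annulus with an inner boundary collapsing to $O$, so one cannot directly invoke that lemma; the path-truncation step above is what manufactures, by hand, the interior maximum of the subharmonic distance function that makes the Maximum Principle argument go through.
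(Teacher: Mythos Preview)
There are two genuine gaps in your argument.

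\medskip
\textbf{The Maximum Principle step does not work.} You choose $t_0$ to maximize $d_{X_k}(h_n(\sigma(t)),O)$ \emph{along the path $\sigma$}, then pick a disk $D$ around $x=\sigma(t_0)$ and assert that the harmonic function $z\mapsto d_{X_k}(h_n(z),O)$ ``attains an interior maximum at $x$'' on $D$. But a maximum along a one-dimensional arc is not a local maximum on the two-dimensional disk: off the curve $\sigma$, the function may well be larger than $d_{X_k}(h_n(x),O)$. Indeed, for a nonconstant harmonic function this is exactly what happens at any point that is not a global maximum. So the strong Maximum Principle gives you nothing here, and the subsequent chain of deductions collapses. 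The paper's Topological Lemma avoids this by working with a \emph{region} bounded by components of the preimage of $O$ (together with boundary arcs), so that the subharmonic distance function is forced to achieve its maximum somewhere in the interior of that region; your path-truncation does not manufacture such a region.

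\medskip
\textbf{The claim that zeros of $\phi_n$ map to $O$ is unjustified.} Your stated reason, that $O$ is the unique branch point of $X_k$, is not enough: a harmonic map to a single prong (an interval) can certainly have zeros of its Hopf differential over interior points --- think of $z\mapsto \Re(z^2)$. Ruling this out is precisely the content of the ``Claim'' inside the proof of Lemma~\ref{toplem}, and that argument uses the prong-duplicity of the boundary values in an essential way: a branched level set through an interior image point would force at least three preimages on the boundary. You never invoke prong-duplicity, so this step is missing.

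\medskip
\textbf{Comparison with the paper.} The paper's proof does not attempt a direct Maximum-Principle argument on $\Sigma_n$. Instead it observes that $h_n$ restricts to the model map $m$ on $\partial\Sigma_n$ and hence inherits the prong-duplicity property there; it then slits $\Sigma_n$ along a spine (mapping to $O$) to reduce the domain to an annulus with one boundary component sent to $O$, at which point Lemma~\ref{toplem} applies verbatim. Both of your missing ingredients --- prong-duplicity and a genuine region on which to run the Maximum Principle --- are supplied by that reduction.
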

\begin{proof} By construction, each $h_n$ restricts to the model map $m$ on $\partial \Sigma_n$. 
 Recall that the model map $m$ (and consequently $h_n$) has the ``prong-duplicity property" on the subsurface boundary, namely  that any interior point of a prong of $X_k$ has precisely two preimages on $\partial \Sigma_n$ (see Property (2) after Definition~\ref{modmap}).
In the Topological  Lemma \ref{toplem} we showed that a harmonic map with this property has a Hopf differential with a connected critical graph, in the case that the domain was an annulus with one boundary component mapping to the vertex $O\in X_k$. In the case at hand, the domain is a compact surface with boundary, with a spine that maps to $O$. However,  we can reduce to the case of a punctured disk by making slits along each finite-length edge of the spine. Note that the resulting punctured disk has a boundary that maps to $O$ under $h_n$.  Applying the Topological Lemma then completes the proof.  \end{proof}
 
The main result of this section is:
 
 \begin{prop}\label{limit} After passing to a subsequence, the harmonic maps $h_n$ converge uniformly on compact sets to a harmonic map $h:\Sigma \setminus p \to X_k$,  which is at bounded distance from $m$ on $U$, and whose Hopf differential $q\in \mathcal{HP}_k(\Sigma,p)$.  \end{prop}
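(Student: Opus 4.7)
The plan is to obtain $h$ as a limit (on compact sets) of a subsequence of $\{h_n\}$ via Lemma \ref{alem1}, and then verify that the limit inherits the three required properties. This rests on uniform bounds --- independent of $n$ --- on both the energy $\mathcal{E}(h_n|_K)$ and the image diameter of $h_n(K)$, for each fixed compact $K \subset \Sigma \setminus p$.

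For the key estimates, I would invoke Proposition \ref{main-prop} applied to the partially free boundary problem on $A_n$ with Dirichlet data $m|_{\partial^+ A_n}$: its unique solution $\tilde{h}_n$ (by Corollary \ref{uniq}) maps the common inner boundary $\partial^- A_n = \partial^- A_1$ into a fixed compact subset $B \subset X_k$ independent of $n$. I would then build a competitor $g_n : \Sigma_n \to X_k$ for $h_n$ by setting $g_n = \tilde{h}_n$ on $A_n$ and, on $\Sigma_0$, interpolating from $\tilde{h}_n|_{\partial^- A_1}$ to the restriction on $\Sigma_0$ of a fixed reference collapsing map of some auxiliary half-plane differential (whose existence follows from \cite{Gup25}), with a small modification along the interpolation collar so that $g_n$ satisfies condition (b) of Definition \ref{colhn}. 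Since $\tilde{h}_n|_{\partial^- A_1} \subset B$, the interpolation cost is uniformly bounded in $n$. The minimality of $h_n$ together with the inequality $\mathcal{E}(\tilde{h}_n) \leq \mathcal{E}(h_n|_{A_n})$ then forces both $\mathcal{E}(h_n|_{\Sigma_0}) \leq K$ and $\mathcal{E}(h_n|_{A_n}) - \mathcal{E}(\tilde{h}_n) \leq K$ uniformly.

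By condition (b), the critical graph of $h_n$ is a connected spine of $\Sigma_n$, so it must intersect $\Sigma_0$, giving an anchor point $z_n \in \Sigma_0$ with $h_n(z_n) = O$. Combined with the uniform energy bound on the fixed compact surface $\Sigma_0$ and standard modulus-of-continuity estimates for harmonic maps into tree targets, this forces $h_n(\Sigma_0)$, and in particular $h_n|_{\partial^- A_1}$, to lie in a uniformly bounded subset of $X_k$. The subharmonic function $d(h_n(\cdot), m(\cdot))$ (subharmonic by Lemma \ref{dist} and the convexity of the tree metric) vanishes on $\partial^+ A_n$ and is uniformly bounded on $\partial^- A_n$, so the Maximum Principle gives $d(h_n, m) \leq C$ uniformly on $A_n \subset U$. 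With the uniform image and energy bounds, Lemma \ref{alem1} on a compact exhaustion of $\Sigma \setminus p$ yields a subsequential limit $h : \Sigma \setminus p \to X_k$ with $d(h, m) \leq C$ on $U$. The Hopf differentials of $h_n$ have connected critical graphs by Lemma \ref{hconn} and converge uniformly on compacta to the Hopf differential $q$ of $h$; repeating the prong-singularity count and Maximum Principle argument from the proof of Proposition \ref{first} shows that $q$ has a connected critical graph whose complement consists of exactly $k$ half-planes, and the bounded-distance comparison with $m$ forces $q$ to have a pole of order $k+2$ at $p$, so $q \in \mathcal{HP}_k(\Sigma,p)$.

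The principal obstacle is the uniform energy estimate: $\mathcal{E}(h_n)$, $\mathcal{E}(\tilde{h}_n)$, and $\mathcal{E}(c_P|_{A_n})$ all diverge as $n \to \infty$, and only their pairwise differences on fixed parts of the domain are uniformly controlled. This controlled difference is precisely the content of the symmetric-annulus estimates of \S4 (Propositions \ref{main-prop} and \ref{energy}), built from the doubling trick of Proposition \ref{doub} and the exponential decay of Proposition \ref{tech}; the symmetry hypothesis on the exhaustion enters crucially at this step. A secondary technical point is ensuring the competitor $g_n$ satisfies condition (b), handled by a bounded-energy modification along the interpolation collar that inserts an arc connecting the critical graph pieces coming from $\tilde{h}_n$ and the reference collapsing map.
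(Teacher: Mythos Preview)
Your overall architecture matches the paper's: obtain a uniform energy bound on $h_n|_{\Sigma_0}$ via comparison with a competitor, use the connected spine to anchor the image at $O$, apply the Maximum Principle to $d(h_n,m)$ on $A_n$, extract a limit via Lemma~\ref{alem1}, and verify the half-plane property of the limiting Hopf differential by repeating the singularity-count and connectedness argument from Proposition~\ref{first}. The convergence, bounded-distance, and limiting-structure portions of your sketch are essentially identical to the paper's Corollary and Lemma~\ref{final}.

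The substantive difference is in the competitor for the energy bound. The paper takes $g_n = h_0$ on $\Sigma_0$ and $c_n = m|_{A_n}$ on $A_n$; since $h_0 \in \mathcal{H}_0$ and $c_n$ is the restriction of the model collapsing map, these glue along $\partial^-A_1$ (where both equal $m$) to a map that is \emph{manifestly} in $\mathcal{H}_n$. The price is that $\mathcal{E}(c_n)$ must then be compared to the partially free energy $e_n$, which is exactly Proposition~\ref{energy}: one gets $\mathcal{E}(h_n|_{\Sigma_0}) + e_n \leq \mathcal{E}(h_n) \leq \mathcal{E}(h_0) + \mathcal{E}(c_n) \leq \mathcal{E}(h_0) + e_n + K$.

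Your competitor instead places the partially free solution $\tilde{h}_n$ on $A_n$ and interpolates on $\Sigma_0$. This trades Proposition~\ref{energy} for a direct appeal to Proposition~\ref{main-prop}, but it creates a genuine problem with condition~(b) of Definition~\ref{colhn}. That condition is global: the map must be a collapsing map for a foliation whose singularities all lie on a connected spine sent to $O$. The map $\tilde{h}_n$ on $A_n$ is harmonic, but the Topological Lemma~\ref{toplem} does not apply to it (neither boundary of $A_n$, nor of the doubled annulus, is sent to $O$), so there is no reason its prong-singularities map to $O$ or lie on a connected graph. A ``modification along the interpolation collar'' on $\Sigma_0$ cannot repair the foliation structure of $\tilde{h}_n$ on the annular piece, and ``inserting an arc'' does not by itself make the glued map a collapsing map for any foliation. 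Since $h_n$ minimizes only over $\mathcal{H}_n$, the inequality $\mathcal{E}(h_n)\leq \mathcal{E}(g_n)$ is then unjustified. The paper's choice of $c_n$ rather than $\tilde{h}_n$ on the annulus is precisely what sidesteps this: $c_n$ is already a collapsing map, and Proposition~\ref{energy} absorbs the energy discrepancy $\mathcal{E}(c_n)-e_n$ into the constant $K$.
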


The key step in the proof is to show that the energy of the restriction of $h_n$ to $\Sigma_0$ is uniformly bounded:

\begin{lem}[Energy bound]\label{aprio}
There exists a constant $E>0$ such that the energy of the restriction $\mathcal{E}(h_n\vert_{\Sigma_0}) \leq E$ for all $n$.
\end{lem}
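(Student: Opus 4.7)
The plan is to produce a carefully chosen competitor $\tilde h_n \in \mathcal{H}_n$ for the energy-minimization defining $h_n$, whose restriction to $A_n$ is exactly the collapsing map $c_P$ of the planar end, and then to invoke Proposition~\ref{energy} to control the gap between $\mathcal{E}(c_P|_{A_n})$ and $\mathcal{E}(h_n|_{A_n})$. Let $\gamma$ denote the common boundary between $A_n$ and $\Sigma_0$ (the ``inner'' component $\partial^{-}A_n$ in the notation of \S4, which coincides with $\partial P$ under the symmetric exhaustion of the planar end). Since the $A_n$ nest with $\gamma$ in common, this curve is independent of $n$, and $m|_\gamma \equiv O$.

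First I would fix, once and for all, a map $g\colon \Sigma_0 \to X_k$ of finite energy $E_0$ which agrees with $m$ on $\gamma$ and which is itself a collapsing map whose critical graph meets $\gamma$ exactly at the prong endpoints of $c_P|_{\gamma}$. Such a $g$ can be produced by taking the collapsing map of any half-plane differential on $(\Sigma,p)$ furnished by \cite{Gup25}, restricted to $\Sigma_0$ and modified inside a fixed collar of $\gamma$ at a bounded additional energy cost. Setting
\begin{equation*}
\tilde h_n \;=\; g \ \text{on}\ \Sigma_0, \qquad \tilde h_n \;=\; c_P|_{A_n} \ \text{on}\ A_n,
\end{equation*}
gives a continuous map that equals $m$ on $\partial \Sigma_n$, whose singular locus is the concatenation of the critical graphs of $g$ and $c_P|_{A_n}$ along their common endpoints on $\gamma$, and hence is a connected spine of $\Sigma_n$ mapping to $O$. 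Therefore $\tilde h_n \in \mathcal{H}_n$, and the energy-minimizing property of $h_n$ together with additivity yields
\begin{equation*}
\mathcal{E}(h_n|_{\Sigma_0}) + \mathcal{E}(h_n|_{A_n}) \;=\; \mathcal{E}(h_n) \;\leq\; \mathcal{E}(\tilde h_n) \;=\; E_0 + \mathcal{E}(c_P|_{A_n}).
\end{equation*}

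It remains to bound $\mathcal{E}(c_P|_{A_n}) - \mathcal{E}(h_n|_{A_n})$ uniformly in $n$. The restriction $h_n|_{A_n}$ has prescribed boundary values $m = c_P$ on the moving component $\partial^{+}A_n = \partial \Sigma_n$ and no constraint on $\gamma = \partial^{-}A_n$, so it is a competitor for the partially free boundary problem on the symmetric annulus $A_n$. Denoting the minimizer of that problem by $f_n$, we have $\mathcal{E}(f_n) \leq \mathcal{E}(h_n|_{A_n})$, while Proposition~\ref{energy} applied to $\{A_n\}_{n \geq 1}$ gives $\mathcal{E}(c_P|_{A_n}) \leq \mathcal{E}(f_n) + K$. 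Chaining these with the previous display produces $\mathcal{E}(h_n|_{\Sigma_0}) \leq E_0 + K$, so the lemma holds with $E = E_0 + K$. The hard part will be Step~1, namely verifying condition~(b) of Definition~\ref{colhn} for $\tilde h_n$: this reduces to arranging the collar interpolation so that the critical graphs of $g$ and $c_P|_{A_n}$ share their prong endpoints on $\gamma$, which is possible because both maps are collapsing maps of half-plane structures exhibiting the same prong-duplicity structure at $\gamma$.
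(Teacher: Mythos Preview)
Your proof is correct and follows essentially the same strategy as the paper's: build a competitor in $\mathcal{H}_n$ by gluing a fixed finite-energy map on $\Sigma_0$ to the collapsing map $c_n = c_P|_{A_n}$ on $A_n$, use the minimizing property of $h_n$ for the upper bound, and use Proposition~\ref{energy} together with $\mathcal{E}(f_n)\le \mathcal{E}(h_n|_{A_n})$ for the lower bound, yielding $\mathcal{E}(h_n|_{\Sigma_0})\le E_0+K$. The only difference is that the paper takes the fixed map on $\Sigma_0$ to be the minimizer $h_0\in\mathcal{H}_0$ itself rather than manufacturing one from \cite{Gup25} plus a collar interpolation; this sidesteps your ``hard part'' entirely, since $h_0$ satisfies condition~(b) by definition and agrees with $c_n$ on $\gamma=\partial^-A_1$ (both restrict to $m$ there), so the glued map $g_n$ lies in $\mathcal{H}_n$ automatically.
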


 \begin{figure}
  \centering
  \includegraphics[scale=0.37]{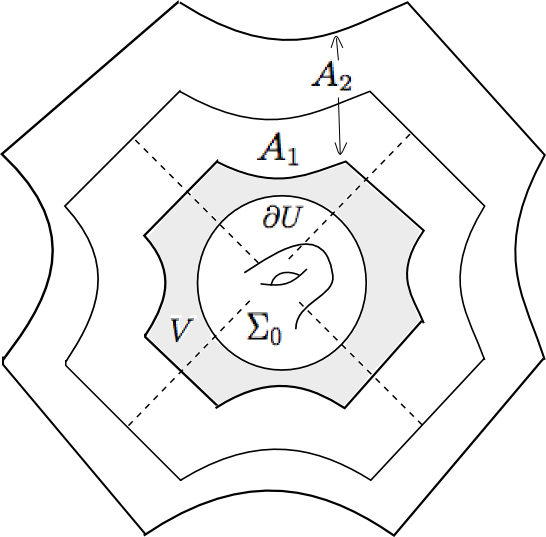}\\
  \caption{Part of the compact exhaustion used in Lemma \ref{aprio}.}
\end{figure}

\begin{proof}
The strategy of proof follows that of \cite{Wolf3}, also used in \cite{JostZuo} - we first describe it in brief:
 Recall that $h_n$ is the energy-minimizing map amongst those in $\mathcal{H}_n$ (Definition \ref{colhn}). The energy $\mathcal{E}(h_n)$ can be expressed as a sum of the energies of the restrictions  of $h_n$ to $\Sigma_0$ and $A_n$.  We  shall bound this  energy in both directions:  for an upper bound we construct a ``candidate" map  $g\in \mathcal{H}_n$, and for  the lower bound we use the estimate on the energy of the partially free boundary problem obtained in  \S4 (Proposition \ref{energy}). The two sides of the inequality shall  then reduce to the required uniform bound.   

We denote by $c_n$ the restriction of the model (collapsing) map $m$ to the symmetric rectangular annulus $A_n$  for $n\geq1$.
Let $e_n$ be the  energy of the  solution of the partially free-boundary problem on $A_n$, that is, the least energy map to the $k$-pronged tree $X_k$  amongst those that restrict to $m$ on the outer boundary $\partial^+A$. 

By Proposition \ref{energy} we have:
\begin{equation}
\mathcal{E}(c_n) \leq e_n + K
\end{equation}
where $K$ is independent of $n$.

Now let  $h_0:\Sigma_0 \to X_k$ the energy minimizing map as defined in Definition \ref{colhn}. \\
For $n\geq 1$ define $g_n:\Sigma_n\to X _k$ to be the map that restricts to $h_0$ on $\Sigma_0$ and to $c_n$ on the annular region $A_n$: note that this map is well-defined as $h_0\Bigr|_{\partial\Sigma_0}=h_0\Bigr|_{\partial^-A_1}=c_n\Bigr|_{\partial^-A_1}$.

Note that $g_n\in \mathcal{H}_n$  and note that the energy $\mathcal{E}(g_n)$ decomposes as
\begin{equation}\label{ebound}
\mathcal{E}(g_n) =  \mathcal{E}(h_0) +  \mathcal{E}(c_n).
\end{equation}

Because $h_n$ is a minimizer for a problem for which $g$ is a candidate, we have
\begin{equation*}
\mathcal{E}(h_n) \leq \mathcal{E}(g_n)
\end{equation*}
and, for analogous reasons, we also have
\begin{equation*}
e_n \leq \mathcal{E}(h_n\vert_{A_n}).
\end{equation*}

Combining these last four displayed inequalities, we obtain:
\begin{equation*}
\mathcal{E}(h_n\vert_{\Sigma_0}) + e_n \leq  \mathcal{E}(h_n\vert_{\Sigma_0})  + \mathcal{E}(h_n\vert_{A_n})  = \mathcal{E}(h_n) \leq     \mathcal{E}(h_0) + e_n + K
\end{equation*}

which implies  
\begin{equation*}
\mathcal{E}(h_n\vert_{\Sigma_0}) \leq     \mathcal{E}(h_0) + K,
\end{equation*}
the required estimate. 
\end{proof}

\textit{Remark.} The same argument applies for the energy of the restriction of the map $h_n$ to any fixed subsurface $\Sigma_{l}$, and hence to any compact set in $\Sigma\setminus p$. 

\begin{cor} After passing to a subsequence, the sequence $\{h_n\}$ converges to a harmonic map $h:\Sigma\setminus p \to X_k$ that is bounded distance from $m$ on the disk $U$.
\end{cor}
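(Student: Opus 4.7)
The plan is to apply the compactness criterion of Lemma \ref{alem1} along a diagonal argument over a compact exhaustion of $\Sigma\setminus p$. The energy bound from Lemma \ref{aprio} (and the remark immediately following it) already supplies one of the two hypotheses of Lemma \ref{alem1} on every compact subsurface, so the main new input needed is a uniform bound on the diameters of the images $h_n(K)\subset X_k$ for each compact $K\subset \Sigma\setminus p$.

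To get the image bound, I would combine a topological observation with the maximum principle. By Lemma \ref{hconn} the preimage $h_n^{-1}(O)$ is a connected spine of $\Sigma_n$, and since $g\geq 1$ this spine cannot be contained in the topological disk $U$; consequently $h_n^{-1}(O)$ meets the fixed compact set $\Sigma\setminus U$ at some point $z_n$. Combined with the uniform energy bound on a fixed compact neighborhood of $\Sigma\setminus U$ and standard interior regularity estimates for harmonic maps to NPC targets (a Courant--Lebesgue modulus-of-continuity argument as in \S2 suffices), this yields a uniform bound on $\sup_{z\in \partial U} d_{X_k}(h_n(z),O)$, and hence, since $m|_{\partial U}$ is fixed, on $\sup_{z\in \partial U}d_{X_k}(h_n(z),m(z))$.

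This last bound is then propagated into $U$ using the maximum principle: the function $D_n(z)=d_{X_k}(h_n(z),m(z))$ is subharmonic on the annular domain $U\cap \Sigma_n$ (being the pullback of a convex function on $X_k\times X_k$ by a harmonic map, as in Lemma \ref{unbdd}), and it vanishes on the inner boundary $\partial^+A_n$ where $h_n=m$ by construction, so on the annulus $U\cap \Sigma_n$ we get $\sup D_n\leq \sup_{\partial U}D_n$. Together with the corresponding bound on $\Sigma\setminus U$ coming from subharmonicity of $d_{X_k}(h_n(\cdot),O)$ (Lemma \ref{dist}) and the vanishing at $z_n$, this gives uniform image bounds on every compact subset of $\Sigma\setminus p$.

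Applying Lemma \ref{alem1} with a diagonal argument then extracts a subsequence converging uniformly on compact sets to a continuous map $h:\Sigma\setminus p \to X_k$; harmonicity of $h$ follows from the standard fact that uniform limits of harmonic maps with locally uniformly bounded energy inherit the local energy-minimizing property (Definition \ref{har}). Finally, passing to the limit in the uniform inequality $D_n\leq C$ on $U\cap \Sigma_n$ gives $d_{X_k}(h(z),m(z))\leq C$ on $U\setminus p$, so $h$ is at bounded distance from $m$ on $U$. The main obstacle is securing the image diameter bound, since $X_k$ is non-compact and the boundary values of $h_n$ recede with $\partial^+A_n$ toward the puncture; this is resolved precisely by coupling the topological constraint on the spine with the maximum principle on the annular region $U\cap \Sigma_n$.
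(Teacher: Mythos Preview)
Your proof is correct and takes essentially the same approach as the paper: the energy bound from Lemma~\ref{aprio}, the fact that each $h_n$ hits the vertex $O$ on the fixed compact piece (the paper simply notes that ``the elements of the sequence all have the vertex $O$ in their image,'' while you make this explicit via the spine argument), Courant--Lebesgue equicontinuity, a diagonal extraction, and the maximum principle for the subharmonic distance $d_{X_k}(h_n,m)$ on the annulus $A_n\cup V$ (vanishing on $\partial^+A_n$, bounded on $\partial U$). The only organizational difference is that the paper first extracts the convergent subsequence and then reads off the uniform bound on $\partial U$ from that convergence, whereas you obtain the $\partial U$ bound \emph{a priori} before passing to the limit; both orderings are valid.
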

\begin{proof}
As in Lemma~\ref{alem1}, the convergence on any fixed compact set is via a standard argument using the Courant-Lebesgue Lemma, using that the elements of the sequence all have the vertex $O$ in their image. A diagonal argument then provides for convergence on full punctured surface $\Sigma \setminus p$. 

Next, on the annulus $A_n \cup V$, by construction we have that the distance function $d_{X_k}(h_n,m)$ vanishes on the ``outer" boundary $\partial^+A_n$. 
By the convergence $h_n\to h$, we have a uniform distance bound (say $d_{X_k}(h_n,m) < D$) on the inner boundary $\partial U$.  Since the distance function is subharmonic on the annulus $A_n$, by the Maximum Principle each map $h_n$ is then a uniformly bounded distance $D$ from the model map $m$, and hence the uniform limit $h$ is also a uniformly bounded distance from $m$. 
\end{proof}

Finally, we observe:

\begin{lem}\label{final}  The critical graph of the Hopf differential of $h$ is connected,  and the Hopf differential  is  a half-plane differential in $\mathcal{HP}_k(\Sigma, p)$. 
\end{lem}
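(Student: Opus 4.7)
The plan is to mimic the argument used to prove the analogous claim inside the proof of Proposition \ref{first}, now applied to the sequence $h_n$ on the compact exhaustion $\{\Sigma_n\}$ rather than to a sequence of approximants on a shrinking annulus. First I would establish that the Hopf differentials $q_n$ of $h_n$ converge to the Hopf differential $q$ of $h$ smoothly on compact subsets of $\Sigma \setminus p$. Since $h_n \to h$ uniformly on compacta and each $h_n$ has uniformly bounded energy on any fixed compact set (by Lemma \ref{aprio} and the remark following it, applied to each $\Sigma_\ell$), standard elliptic regularity for harmonic maps gives this convergence. It follows that the horizontal foliations $F_n$ of $q_n$ converge to the horizontal foliation $F$ of $q$ in the Hausdorff topology, and that singularities of $F_n$ converge to singularities of $F$. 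Combined with Lemma \ref{hconn}, which asserts that the critical graph of each $q_n$ is connected and entirely contained in $h_n^{-1}(O)$, this ensures that every singularity of $F$ lies in $h^{-1}(O)$.

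Next I would establish connectedness of $h^{-1}(O)$ by the Maximum Principle argument already developed in Lemma \ref{toplem} and reused in Proposition \ref{first}. If $h^{-1}(O)$ had two disjoint components $\Gamma_1, \Gamma_2$, choosing an innermost pair would bound a region $R \subset \Sigma \setminus p$ on whose boundary the function $d_{X_k}(h(\cdot), O)$ (subharmonic by Lemma \ref{dist}) vanishes. An interior maximum then forces $h$ to be constant on $R$, contradicting the nontrivial limiting foliation structure of $F$ on $R$. Hence $h^{-1}(O)$ is connected and contains every singularity of $F$.

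It remains to verify that $q$ lies in $\mathcal{HP}_k(\Sigma,p)$, that is, that $q$ has a pole of order $k+2$ at $p$ and that the complement of the critical graph consists of exactly $k$ Euclidean half-planes. Since every interior point of a prong of $X_k$ has no singularities of $F$ in its $h$-preimage, each such preimage is a smooth bi-infinite horizontal leaf of $F$, with both ends running into $p$. Hence the complement of the connected critical graph decomposes as a disjoint union of Euclidean half-planes swept out by such leaves, which is the half-plane structure of the required synthetic-geometric form. To pin down the number of half-planes as exactly $k$ (equivalently, the pole order as $k+2$), I would invoke the bounded-distance condition $d_{X_k}(h,m) \leq D$ on $U \setminus p$: by the subharmonicity and parabolicity argument employed in Lemma \ref{unbdd}, the foliation $F$ on $U \setminus p$ must share the asymptotic petal structure of the model map $m$, which by construction has exactly $k$ prongs incident to $p$.

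The main obstacle is the final step, controlling the topology of $F$ near the puncture: a priori, zeros of the $q_n$ could escape to $p$ in the limit, changing the pole order, or the $k$-fold petal structure of $m$ could fail to transfer to $q$. The bounded-distance condition between $h$ and $m$, combined with the parabolicity arguments from \S3, is precisely the tool that prevents such degeneration, and this is the crucial point on which the lemma rests.
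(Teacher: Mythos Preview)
Your proposal is correct and follows essentially the same route as the paper: smooth convergence of the Hopf differentials $q_n \to q$ on compacta, then the Maximum Principle argument (as in the Claim inside Proposition~\ref{first}) for connectedness of $h^{-1}(O)$, and finally the bounded-distance condition to $m$ to pin down the pole order as $k+2$. The only notable difference in emphasis is that the paper explicitly invokes an Euler-characteristic count to certify that the limiting singularities account for \emph{all} singularities of $F$ (so none are ``new''), whereas you leave this implicit in the smooth convergence of holomorphic differentials; both are adequate, and your closing paragraph correctly identifies the control of the foliation near the puncture as the delicate point.
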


\begin{proof}
Since the convergence of the harmonic maps  $h_n\to h$ is uniform, so is the convergence of the corresponding Hopf differentials. Since these are holomorphic differentials, this convergence is in fact in $C^2$ (and in fact in $C^k$ for any $k$), and hence their critical graphs converge to the critical graph in the limit. By construction, each critical graph in the sequence is connected, and  maps to the vertex $O$.  We briefly recount the argument, identical to that of the ``Claim" in the proof of Propn. \ref{first}, that  implies that  the limiting critical graph has the same property:  

First, the singularities of the approximates limit to singularities of the limiting foliation, with the order of the resulting singularity being (at least) the orders of the approximates. The sum of the orders of these prong-singularities  is determined by the Euler characteristic of the punctured surface; the orders and hence the sum remains the same for the limiting foliation, and hence accounts for all the singularities. There are no other singularities by this Euler-characteristic count, and hence the entire critical graph maps to $O$. Second,  if the critical graph has more than one component, the subharmonic distance function (from $O$) would  have a maximum in the interior of a region enclosed by them, violating the Maximum Principle.  

Thus, the complementary regions of these spines are then necessarily half-planes (see \S2.2) and hence the Hopf differential is a half-plane differential. Moreover since the harmonic map $h$ is bounded distance from the model map $m$ corresponding to a $k$-planar end on their common domain near the puncture $p$, the Hopf differential of the harmonic map $h$ has a pole of order $k+2$ at $p$, and hence is in $\mathcal{HP}_k(\Sigma, p)$.
\end{proof}

These two preceding lemmas complete the proof of Proposition \ref{limit}, and hence the proof of Proposition \ref{prop: halfplane to model}, and hence the proof of Theorem~\ref{main}. \qed 

\section{Proof of Corollary \ref{thm3}}

\begin{defn}[Spaces of ribbon graphs]\label{msg} 
  Let  $\mathcal{MS}_{g,1}^0$ denote the space of marked metric graphs that form an embedded spine of a  punctured surface $(\Sigma, p)$ of genus $g\geq 1$. These are also called ribbon-graphs or  fat-graphs (see, for example, \cite{PennDec}). For $k\geq 1$, let $\mathcal{MS}_{g,1}^k$ denote the space of such marked metric spines that, in addition, have precisely $k$ edges of infinite length that are incident to the puncture at $p$.
\end{defn}

\textit{Remark.} Graphs in $\mathcal{MS}_{g,1}^k$ are precisely the metric spines of half-plane differentials in $\mathcal{H}_k(\Sigma, p)$, as $(\Sigma,p)$ varies over the Teichm\"{u}ller space $\mathcal{T}_{g,1}$, an observation which we shall use below.\\

As a consequence of Proposition \ref{mex} we also have:

\begin{lem}\label{mgk}
The space of metric graphs $\mathcal{MS}_{g,1}^0$ is homeomorphic to $\mathbb{R}^{6g-6+3}$. For $k\geq 1$, the space of metric graphs $\mathcal{MS}_{g,1}^k$ is homeomorphic to $\mathbb{R}^{6g-6 +k+ 2} \times S^1$.
\end{lem}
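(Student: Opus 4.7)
The plan is to mimic the proof of Proposition~\ref{pk}: identify a maximally-collapsed reference spine, apply the metric-expansion result Proposition~\ref{mex} at its unique vertex, and check that the resulting cells glue smoothly across Whitehead moves. The difference between the two cases of Lemma~\ref{mgk} is that for $k\ge 1$ one must track an additional continuous rotational parameter associated to the infinite rays at the puncture.

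For $\mathcal{MS}_{g,1}^0$, I would take as reference spine $G_0$ the bouquet of $2g$ loops at a single vertex $v$, which is a spine of $\Sigma\setminus p$ since $\pi_1(\Sigma\setminus p)$ is free of rank $2g$. The loop lengths contribute a cell $(\mathbb{R}_+)^{2g}$, while $v$ has valence $4g$, so Proposition~\ref{mex} provides a space $\mathbb{R}^{4g-3}$ of metric expansions of $v$. Following the argument of Proposition~\ref{pk}, the top-dimensional cells (corresponding to different combinatorial types of trivalent ribbon-graph spines) fit together along codimension-one Whitehead moves to give
\begin{equation*}
\mathcal{MS}_{g,1}^0 \;\cong\; (\mathbb{R}_+)^{2g} \times \mathbb{R}^{4g-3} \;\cong\; \mathbb{R}^{6g-3},
\end{equation*}
which recovers the classical Penner-Strebel parametrization of decorated Teichm\"uller space.

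For $\mathcal{MS}_{g,1}^k$ with $k\ge 1$, I would use as reference spine $G_0^k$ the bouquet of $2g$ loops together with $k$ infinite-length rays attached at the same vertex $v$, so that $v$ now has valence $4g+k$. The $2g$ loop lengths still contribute $(\mathbb{R}_+)^{2g}$, and Proposition~\ref{mex} furnishes a space $\mathbb{R}^{4g+k-3}$ of metric expansions of $v$, for a total Euclidean dimension of $6g+k-3$. The new feature in this case is that the placement of the $k$ infinite rays around $v$ carries a continuous rotational degree of freedom $\theta\in S^1$ -- precisely the angle coordinate appearing in the definition of the model maps $\M(k)$ (Definition~\ref{modmap}). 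Once this circle is isolated from the remaining data, assembling the cells over combinatorial types will produce
\begin{equation*}
\mathcal{MS}_{g,1}^k \;\cong\; \mathbb{R}^{6g-4+k} \times S^1 \;=\; \mathbb{R}^{6g-6+k+2} \times S^1.
\end{equation*}

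The principal obstacle will be to verify the smooth gluing of cells along Whitehead moves, so that the local pieces actually fit together to the claimed global topology. For $k=0$ this is the standard local analysis in the Penner-Strebel theory; for $k\ge 1$ one must additionally track how the rotational $S^1$-parameter at the puncture interacts with edge-length parameters across a Whitehead move. In both cases the local model reduces to the expansion picture supplied by Proposition~\ref{mex}.
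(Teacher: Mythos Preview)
Your strategy of applying Proposition~\ref{mex} at a single maximally-collapsed vertex works in Proposition~\ref{pk} because the graph $\Gamma_a$ is the \emph{unique} one-vertex spine of a $k$-planar end, so every other spine is an expansion of it. This uniqueness fails for $\mathcal{MS}_{g,1}^0$ once $g\geq 2$: there are several combinatorially distinct one-vertex one-face ribbon graphs (bouquets of $2g$ loops with different cyclic orderings of the $4g$ half-edges), and a generic ribbon-graph spine collapses to different bouquets depending on which maximal tree you contract. Thus the expansions of a single fixed bouquet cover only one region of $\mathcal{MS}_{g,1}^0$, and your product $(\mathbb{R}_+)^{2g}\times\mathbb{R}^{4g-3}$ has no global meaning --- the ``$2g$ loop lengths'' are not well-defined functions across cells corresponding to different combinatorial types. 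Showing that all these regions glue up to a single Euclidean space is exactly the content of the Penner--Strebel parametrization, which the paper invokes directly (via Strebel's theorem, giving $\mathcal{MS}_{g,1}^0\cong\mathcal{T}_{g,1}\times\mathbb{R}_+$) rather than reproving.

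A separate gap appears for $k\geq 1$: in your single-vertex model, the cyclic ordering of the $4g+k$ half-edges at $v$ is discrete data, so there is no continuous ``rotational degree of freedom'' at $v$ to account for the $S^1$ factor. The paper's $S^1$ has a different origin. One starts with a graph $G\in\mathcal{MS}_{g,1}^0$, regards its complementary punctured disk as bounded by a circle, and chooses a point $v_0$ on that boundary circle at which to attach the $k$ rays; the position of $v_0$ is the $S^1$. This new vertex $v_0$ then has degree $k+2$ (the $k$ rays plus the two directions along the existing spine), and Proposition~\ref{mex} applied \emph{only at $v_0$} yields the remaining $\mathbb{R}^{k-1}$, giving $\mathcal{MS}_{g,1}^k\cong\mathcal{MS}_{g,1}^0\times S^1\times\mathbb{R}^{k-1}$. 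By localizing the use of Proposition~\ref{mex} to this new vertex, the paper avoids the global gluing problem entirely.
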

\begin{proof}

The first statement concerns the space of ribbon graphs (with a punctured-disk complement) that have been parameterized elsewhere (see for example \cite{PennDec}, \cite{MulPenk}). In fact, Strebel's theorem (as in the Introduction) provides the isomorphism $\mathcal{MS}_{g,1}^0 \cong \mathcal{T}_{g,1} \times \mathbb{R}_+ \cong \mathbb{R}^{6g-6+3}$.

To obtain the second statement, we need to add in $k$ infinite edges to such a ribbon graph in $\mathcal{MS}_{g,1}^0$. The simplest way to do this is to add in an extra vertex $v_0$ on the ribbon graph (viewed as the boundary of a disk punctured at $p$) such that the additional $k$ edges emerge from $v_0$. There is then an $S^1$-parameter space of choices for placing the vertex $v_0$.  Any other graph in $\mathcal{MS}_{g,1}^k$ is obtained by taking a metric expansion at $v_0$, in the sense of Definition \ref{mexp} (see Figure 12). Since the degree of $v_0$ is $k+2$, we see by Proposition~\ref{mex} that the total space $\mathcal{MS}_{g,1}^k$ then is homeomorphic to $\mathcal{MS}_{g,1}^0 \times S^1\times \mathbb{R}^{k -1} \cong \mathbb{R}^{6g-6 + k +2} \times S^1$ as required.   \end{proof}

\textit{Remark.} Alternatively, for the second statement, given a ribbon graph in $\mathcal{MS}_{g,1}^0$, one could replace the punctured disk in its complement by a $k$-planar end with a choice of rotation, and hence the total space $\mathcal{MS}_{g,1}^k$ would also be seen from this perspective as homeomorphic to $\mathcal{MS}_{g,1}^0 \times \P(k) \times S^1 \cong \mathbb{R}^{6g-6 + k +2} \times S^1$ (where we have invoked Proposition~\ref{pk}). 

\begin{figure}
  \centering
  \includegraphics[scale=0.55]{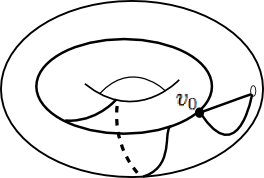}\\
  \caption{A metric spine in $\mathcal{MS}_{1,1}^2$ before ``expansion" as in the proof of Lemma \ref{mgk}. }
\end{figure}

\begin{proof}[Proof of Corollary \ref{thm3}]
We begin with the bundle $\mathcal{Q}_{g,1}^{k+2}$ of meromorphic quadratic differentials  with a pole of order $(k+2)$ over the \tec space $\T_{g,1}$. In that total space $\mathcal{Q}_{g,1}^{k+2}$ of meromorphic quadratic differentials  with a pole of order $(k+2)$,
consider the subset $\mathcal{HP}_{k,g}$ consisting of half-plane differentials,, i.e.
\begin{equation*}
\mathcal{HP}_{k,g} = \bigcup\limits_{(\Sigma,p)} \mathcal{HP}_k(\Sigma,p) \subset \mathcal{Q}_{g,1}^{k+2}
\end{equation*}
where $(\Sigma,p)$ varies over $\mathcal{T}_{g,1}$.

By Theorem~\ref{main}, we know that each  $\mathcal{HP}_k(\Sigma,p) \cong  \mathbb{R}^{k} \times S^1$ and hence 
\begin{center}
$\mathcal{HP}_{k,g}  \cong \mathcal{T}_{g,1}  \times  \mathbb{R}^{k} \times S^1$.
\end{center}
It remains to show that the map $\Phi: \mathcal{HP}_{k,g}\to \mathcal{MS}^k_{g,1}$ that assigns the metric spine of the corresponding half-plane differential, is a homeomorphism. 

Any graph in  $\mathcal{MS}_{g,1}^k$  corresponds to a \textit{unique} pointed Riemann surface $(\Sigma,p)$ and half-plane differential in $\mathcal{HP}_k(\Sigma,p)$ by attaching to the spine $k$ Euclidean half-planes between the $k$ infinite-length edges incident at the puncture. (The differential is then the one induced by the standard differential $d\zeta^2$ in the usual complex coordinate $\zeta$ on each half-plane - see \S5.1 for a related discussion.)  It is easy to see that this assignment forms the inverse for the map $\Phi$.

Hence the map $\Phi$ is a bijection. Moreover, the inverse map constructed above is clearly continuous. Since the \tec space $\mathcal{T}_{g,1}$ is homeomorphic to $\mathbb{R}^{6g-6+2}$, applying  Invariance of Domain to the lift of $\Phi$ to a map between the universal covers  $\widetilde{\mathcal{HP}_{k,g}}$ and $\widetilde{\mathcal{MS}^k_{g,1}}$ (both homeomorphic to $\mathbb{R}^{6g-6 + k+3}$) completes the proof.  \end{proof}

\bibliographystyle{amsalpha}
\bibliography{qdref}

\end{document}